\newtheorem{theorem}{Theorem}[section]
\newtheorem{corollary}[theorem]{Corollary}
\newtheorem{define}[theorem]{Definition}
\newtheorem{example}[theorem]{Example}
\newtheorem{lemma}[theorem]{Lemma}
\newtheorem{proposition}[theorem]{Proposition}
\newtheorem*{theoremnonum}{Theorem}
\newcommand{\FF}{\mathbb{F}}
\newcommand{\HH}{\mathbb{H}}
\newcommand{\PP}{\mathbb{P}}
\newcommand{\NN}{\mathbb{N}}
\newcommand{\CC}{\mathbb{C}}
\newcommand{\ZZ}{\mathbb{Z}}
\newcommand{\QQ}{\mathbb{Q}}
\newcommand{\Aff}{\mathbb{A}}
\newcommand{\thetachar}[4]{\theta\left[\begin{array}{c}#1\\#2\end{array}\right](#3,#4)}
\newcommand{\leftexp}[2]{{\vphantom{#2}}^{#1}{#2}}
\newcommand{\RA}{\mathcal{RA}}
\newcommand{\AM}{\mathcal{AM}}
\newcommand{\RM}{\mathcal{RM}}
\newcommand{\M}{\mathcal{M}}
\newcommand{\RRM}{\mathcal{R}^2\mathcal{M}}
\newcommand{\A}{\mathcal{A}}
\renewcommand{\P}{\mathcal{P}}
\newcommand{\RP}{\mathcal{RP}}
\newcommand{\J}{\mathcal{J}}
\newcommand{\RJ}{\mathcal{RJ}}
\newcommand{\RQ}{\mathcal{RQ}}
\newcommand{\C}{\mathcal{C}}
\newcommand{\RC}{\mathcal{RC}}
\renewcommand{\S}{\mathcal{S}}
\newcommand{\RS}{\mathcal{RS}}
\newcommand{\X}{\mathcal{X}}
\newcommand{\RX}{\mathcal{RX}}
\newcommand{\RGamma}{\mathcal{R}\Gamma}
\newcommand{\As}{\overline{\A}^s}
\newcommand{\At}{\overline{\A}^t}
\newcommand{\RAs}{\overline{\RA}^s}
\newcommand{\RAt}{\overline{\RA}^t}
\newcommand{\RMbar}{\overline{\RM}}
\newcommand{\Mbar}{\overline{\M}}
\newcommand{\RRMbar}{\overline{\RRM}}
\newcommand{\sh}{\mathscr}
\DeclareMathOperator{\Lines}{Lines}
\DeclareMathOperator{\WE}{WE}
\DeclareMathOperator{\Nm}{Nm}
\DeclareMathOperator{\Sp}{Sp}
\DeclareMathOperator{\diam}{diam}
\DeclareMathOperator{\Bl}{Bl}
\DeclareMathOperator{\Sym}{Sym}
\DeclareMathOperator{\Ram}{Ram}
\DeclareMathOperator{\mult}{mult}
\DeclareMathOperator{\sm}{small}
\DeclareMathOperator{\im}{im}
\DeclareMathOperator{\diag}{diag}
\DeclareMathOperator{\bg}{big}
\DeclareMathOperator{\smll}{small}
\begin{document}

\title{The Schottky problem in genus five}
\author{Charles Siegel}
\address{Kavli IPMU (WPI), The University of Tokyo, Kashiwa, Chiba 277-8583, Japan}
\email{charles.siegel@ipmu.jp}

\begin{abstract}
In this paper, we present a solution to the Schottky problem in the spirit of Schottky and Jung for genus five curves.  To do so, we exploit natural incidence structures on the fibers of several maps to reduce all questions to statements about the Prym map for genus six curves.  This allows us to find all components of the big Schottky locus and thus, to show that the small Schottky locus introduced by Donagi is irreducible.
\end{abstract}

\maketitle

\tableofcontents

\section*{Introduction}

The Schottky problem has a long history and is one of the first questions to be asked in complex curve theory: how can we characterize Jacobians among all abelian varieties? The question turns out to be quite subtle, and a completely satisfactory answer is still lacking, despite several characterizations existing and being a fertile ground for new techniques in studying the moduli of abelian varieties.

\subsection*{Historical Overview}

The oldest approach to the Schottky problem is via theta functions.  Certain theta functions give natural coordinates on $\A_g$, called thetanulls, and Schottky \cite{S} for genus four and Schottky and Jung \cite{SJ} for genus $g\geq 5$ conjectured equations for the locus of Jacobians.  We denote the locus these equations cut out by $\S_g$, the Schottky locus, and then the Schottky-Jung conjecture is that $\S_g=\J_g$.  Schottky and Jung's results implied only $\J_g\subset \S_g$, though that $\J_g$ is an irreducible component of $\S_g$ was shown by \cite{MR767196}.  A solution in the genus four case was announced in the late 1960s by Igusa \cite{MR0271117} and appeared independently in \cite{I} and \cite{MR701272}.  The method was to prove that $\S_4$ was an irreducible divisor on $\A_4$, and thus, must be identical with $\J_4$, an irreducible divisor contained inside it.  Unfortunately, this method cannot be employed directly in higher genus, as it relied on the fact that $\S_4$ is given by a single equation in theta nulls.

Another attempt to describe the Jacobians involves the dimension of the singular locus of the theta divisor.  The Riemann singularity theorem implies that this singular locus has dimension at least $g-4$, and this is a special condition on abelian varieties.  Andreotti and Mayer \cite{MR0220740} studied the locus $\AM_g$ of such abelian varieties and showed that it gives a weak solution to the Schottky problem, that is, the Jacobians are an irreducible component of $\AM_g$, but other components exist.  In genus four \cite{MR0572974} and five \cite{MR598683,MR1054527} the locus $\AM_g$ has been completely described as \[\AM_4=\theta_{null,4}\cup \J_4\mbox{ and } \AM_5=\A_1\times\A_4\cup\mathcal{BE}\cup \J_5,\] where $\mathcal{BE}$ is the locus of bielliptic Prym varieties and $\theta_{null,g}$ the locus of abelian varieties with a vanishing thetanull, with all the non-Jacobians contained in $\theta_{null,5}$.  This approach has not been carried out in higher genus fully, but singularities of theta divisors remains an active area of study \cite{MR2457734,MR1919422,MR2348405}.

A third approach has been through trisecants.  Fay's trisecant formula \cite{F} tells us that the natural embedding of $\J(C)/\pm 1\to \PP^{2^g-1}$ has a four dimensional family of trisecant lines.  Welters \cite{We2} proved that if $A/\pm 1\to\PP^{2^g-1}$ has a one parameter family of trisecant lines, then it must be a Jacobian.  The family of trisecants has an infinitesimal formulation as the KP hierarchy, and Novikov conjectured that an abelian variety is a Jacobian if and only if the corresponding theta function satisfies the KP hierarchy.  This was proved by Shiota \cite{Sh}.  The strongest result on trisecants is the recent proof, using difference equations, by Krichever \cite{krichever605625characterizing} that a single trisecant on $A/\pm 1$ suffices to determine that an abelian variety is a Jacobian.

\subsection*{This paper}

In \cite{MR903385,MR898055}, Donagi showed that the original Schottky-Jung conjecture was incorrect.  In particular, he showed that $\C$, the locus of intermediate Jacobians of cubic threefolds, is contained in $\S_5$.  Additionally, he offered a means of correcting this, and conjectured that nothing not accounted for by his methods appears in the Schottky locus.  However, even in genus five a proof was out of reach at the time, with some aspects of degenerations of dimension five abelian varieties remaining unclear until Izadi's thesis \cite{IzadiThesis}.

In this paper, we present a proof of

\begin{theoremnonum}
Inside of $\A_g$, we have \[\J_5=\S_g^{\smll}.\]
\end{theoremnonum}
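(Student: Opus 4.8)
The plan is to lift the whole problem to the moduli space $\RA_5$ of principally polarized abelian fivefolds carrying a Prym structure (passing to a finite level cover of it wherever convenient), on which the Schottky--Jung relations cut out the Schottky locus $\RS_5$; its image in $\A_5$ is the big Schottky locus $\S_5^{\bg}$, so finding all components of the big Schottky locus amounts to describing the components of $\RS_5$. The classical Schottky--Jung proportionality gives $\RJ_5\subseteq\RS_5$, and since $\RJ_5$ is exactly the preimage of $\J_5$ under $\RA_5\to\A_5$, this already yields the easy inclusion $\J_5\subseteq\S_5^{\smll}$. The content of the theorem is therefore the reverse inclusion $\S_5^{\smll}\subseteq\J_5$; unwinding the definition of $\S_5^{\smll}$ as the set of $A$ all of whose Prym structures lie over $\RS_5$, this says precisely that every non-Jacobian $A\in\A_5$ carries at least one Prym structure $\eta$ with $(A,\eta)\notin\RS_5$. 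I would prove this in two steps: (i) enumerate the irreducible components of $\RS_5$; (ii) for each non-Jacobian component, show that no point of its image in $\A_5$ other than a Jacobian has \emph{all} of its Prym structures on $\RS_5$.

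For step (i) I would use the Schottky--Jung correspondence to translate the defining relations of $\RS_5$ into the geometry of the Prym map $\P\colon\RM_6\to\A_5$, which is dominant of degree $27$ by Donagi--Smith. A point of $\RS_5$ records a candidate Prym and, through the correspondence, the $27$-sheeted fiber of $\P$ over the underlying abelian variety, while the compatibilities among the Prym structures upstairs are governed by the iterated Prym tower $\RRM_6\to\RM_6$ and by Donagi's tetragonal construction $\Tet$, whose incidence structure on the fibers of $\P$ carries the $W(E_6)$ symmetry of the $27$ lines. Carrying this analysis through should isolate the Jacobian component $\RJ_5$ and leave only the expected exceptional components: the locus $\RC$ of intermediate Jacobians of cubic threefolds, on which the Prym structures entering the Schottky--Jung relations form a small finite set tied to the lines on the threefold, together with a few further components supported on the reducible and degenerate strata (of $\RA_1\times\RA_4$ type, their boundary degenerations, and the bielliptic Prym locus $\mathcal{BE}$). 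Handling these last strata inside the compactifications $\RAs$ and $\RAt$ is where the detailed structure of degenerations of five-dimensional abelian varieties, going back to Izadi's thesis, enters, and I expect step (i) --- in particular proving that the component list is \emph{complete} --- to be the principal obstacle.

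Step (ii) is then an incidence and dimension count. Fix a non-Jacobian component of $\RS_5$ with image $Y\subseteq\A_5$. The explicit incidence structure lets one compute, for a general $A\in Y$, exactly which Prym structures $\eta$ satisfy $(A,\eta)\in\RS_5$ --- for $A=IJ(X)$ a cubic threefold these are governed by the lines on $X$, and for the reducible and boundary strata by the $\Sp$ stabilizer of the decomposition --- and in every case the answer is a proper subset of the full set of Prym structures on $A$, so a general $A\in Y$ carries a Prym structure off $\RS_5$. Tracking the same structure over all of $Y$, not merely its generic point, upgrades this to every $A\in Y\setminus\J_5$, whence $\S_5^{\smll}\cap Y\subseteq\J_5$; as $\RS_5$ has finitely many components, $\S_5^{\smll}=\J_5$. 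Irreducibility of $\S_5^{\smll}$ is then immediate, since $\J_5$ is the closure of the irreducible image of the Torelli map.
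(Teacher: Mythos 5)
Your overall architecture matches the paper's: reduce everything to enumerating the components of $\RS_5$ (equivalently, of the big Schottky locus) and then observe that each non-Jacobian component fails to have all of its semiperiods in $\RS_5$ --- the cubic threefolds $\RC^0$ carry only even points of order two, the boundary component only the vanishing cycle, and the product locus only semiperiods pulled back from the elliptic factor. Your step (ii) is essentially the paper's final corollary and is fine.

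The genuine gap is in step (i), and it is exactly the step you defer: you give no mechanism for proving the component list is \emph{complete}, and that is the entire content of the paper. The paper's mechanism is a degree count for the theta map $\beta_5\colon\RA_5\to\PP^{15}/G_4$, not a direct analysis of the Schottky--Jung relations on the Prym tower. Concretely: theta symmetry makes the general fiber of $\beta_5$ into a $Q_6^-$-configuration over $\FF_2$ (one line through $(A,\mu)$ for each of the $27$ preimages under the degree-$27$ Prym map $\RM_6\to\A_5$); a purely combinatorial lemma shows the only connected symmetric such configuration of diameter $2$ is $Q_8^-$, with $119$ points; connectivity and the diameter bound are established by degenerating into $\partial^{II}\RAt_5$, where $\beta_5$ restricts to $\beta_4$ and Izadi's unramifiedness result applies. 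Hence $\deg\beta_5=119$. Completeness of the component list then follows from the arithmetic $1+54+64=119$, where these are the local degrees of $\beta_5$ on $\RC^0$, $\overline{\RJ}_5$, and $\partial^I\overline{\RA}_5^t$ respectively, together with a separate classification of all loci contracted by $\beta_5$ over $\alpha_4(\A_4)$ (using the classification of genus-$6$ curves with infinitely many $g^1_4$'s, Mumford's result on hyperelliptic Pryms, the trigonal construction, and Shokurov's theorem ruling out the bielliptic locus --- note your candidate list wrongly retains $\mathcal{BE}$ and all of $\RC$ rather than only $\RC^0$). Without the $119$ computation, or some substitute for it, ``carrying the analysis through'' does not close: the tetragonal/$W(E_6)$ incidence structure alone tells you which components you expect, not that there are no others.
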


The paper is organized as follows.  In section 1, we will gather results from the literature, recall definitions and set notation for the remainder of the paper.  Section 2 consists of the geometry and combinatorics of point and line configurations over $\FF_2$, and describes incidence structures on the fibers of several maps defined below.  Section 3 studies degenerations of abelian varieties of dimension five, using results from Izadi's thesis \cite{IzadiThesis} to complete the computation of the degree of the map $\beta_5$ begun in section 2 via incidence structures.  Section 4 consists of a study of the contracted loci for $\beta_5$, completing our picture of this map and allowing us to proceed to section 5, where we use this to compute the Schottky locus, and prove the main theorem.

\subsection*{Acknowledgments}
I would like to thank first of all Ron Donagi, who suggested this problem to me and advised me through the work as my Ph.D. thesis at the University of Pennsylvania.  Also, I am indebted to Sam Grushevsky, Gavril Farkas, Angela Ortega, Tony Pantev, Elham Izadi and Angela Gibney for conversations clarifying aspects of the problem as well as the geometry of the moduli spaces involved in this paper.  This work was supported by World Premier International Research Center Initiative (WPI Initiative), MEXT, Japan, and also by the University of Pennsylvania 

\section{Background}

\subsection{Moduli of curves and abelian varieties}

First, we will describe the various moduli spaces that we will need, all of which are covers of the spaces $\M_g$ and $\A_g$, the moduli space of genus $g$ compact Riemann surfaces and of $g$ dimensional principally polarized abelian varieties, respectively.

We will need first $\HH_g$, the Siegel upper half space of $g\times g$ symmetric complex matrices with positive definite imaginary part.  This space is the stack universal cover of $\A_g$, and has an action of $\Gamma_g=\Sp(2g,\ZZ)$ by 
\[\left(\begin{array}{cc}A&B\\C&D\end{array}\right)\cdot \Omega=(A\Omega+B)(C\Omega+D)^{-1}\]
 such that $\A_g\cong \HH_g/\Gamma_g$ as complex analytic spaces.

Other space we will need are defined by subgroups of $\Gamma_g$.  We set $\A_g(n)$ to be the quotient of $\HH_g$ by \[\Gamma_g(n)=\{\gamma\in \Gamma_g|\gamma\equiv 1 \mod n\}.\] This space parameterizes pairs $(A,\phi)$ where $A\in \A_g$ and $\phi\colon A[n]\to (\ZZ/n\ZZ)^{2g}$ is a symplectic isomorphism from the points of order $n$ on $A$ to $(\ZZ/n\ZZ)^{2g}$.  For theta functions, it will be essential to use $\A_g(n,2n)$, which is the quotient by \[\Gamma_g(n,2n)=\left\{\left.\left(\begin{array}{cc}A&B\\C&D\end{array}\right)\in\Gamma_g(n)\right|\diag(\leftexp{t}{A}C)\equiv\diag(\leftexp{t}{B}D)\equiv 0\mod 2n\right\}.\]

For the Prym map, we will need to fix the vector $v=(0,\ldots,0,1/2)\in\QQ^{2g}$, and then look at the groups
\begin{eqnarray*}
\RGamma_g(n)&=&\{\gamma\in\Gamma_g(n)|\gamma\cdot v\equiv v \mod (n\ZZ^{2g})\}\\
\RGamma_g(n,2n)&=&\{\gamma\in\Gamma_g(n,2n)|\gamma\cdot v\equiv v \mod (n\ZZ^{2g})\}
\end{eqnarray*}
corresponding to the moduli spaces $\RA_g(n)$ and $\RA_g(n,2n)$.  These spaces parameterize pairs $(A,\mu)$ where $A$ is in $\A_g(n)$ or $\A_g(n,2n)$, respectively, along with $\mu\in A[2]$ nonzero.

For $\M_g$, we will need all of the analogous spaces to the ones defined above for $\A_g$.  They can all be constructed via pullbacks: we use the Torelli map $\J_g\colon\M_g\to\A_g$ and take the fiber product with the projection to $\A_g$.  For example, $\RM_g$ is the space of pairs $(C,\mu)$ where $\mu$ is a nonzero point of order 2 on $\J(C)$.  We will need one additional space, $\RRM_g$, whose elements are ordered triples $(C,\mu,\nu)$ with $C\in\M_g$, $\mu,\nu$ distinct nonzero points of order $2$ on the Jacobian such that for any line bundle $L$ on $C$ with $L^{\otimes 2}\cong K_C$, we have 
\begin{equation}
\label{weilpairing}
h^0(C,L)+h^0(C,L\otimes \mu)+h^0(C,L\otimes \nu)+h^0(C,L\otimes \mu\otimes\nu)\equiv 0\mod 2.
\end{equation}
The left hand side is called the \textit{Weil pairing} and we will refer to pairs satisfying \ref{weilpairing} as orthogonal.  For abelian varieties that are not Jacobians, we can define the Weil pairing as the intersection pairing on $H_1(A,\ZZ/2\ZZ)$. (That these are the same is shown in \cite{MR0292836})

Finally, here we note another interpretation of $\RMbar_g$.  The data of a smooth curve $C$ and $\mu\in \J(C)[2]$ is equivalent to $\tilde{C}\to C$, an unramified double cover.  We can construct $\tilde{C}$ inside the total space of $\mu$ by fixing $\varphi:\mu^{\otimes 2}\to\sh{O}_C$ an isomorphism and then, on sufficiently small open sets $U\subset C$, we have \[\tilde{C}|_U=\{\sigma^2=1|\sigma\in \mu(U)\}.\]  We will use $(C,\mu)$ and $(C,\tilde{C})$ interchangeably, along with $(\tilde{C},\iota)$ where $\iota:\tilde{C}\to\tilde{C}$ is the involution corresponding to the double cover.

\subsection{Compactifications}

In this paper, we will need several partial compactifications of the moduli spaces we study, particularly $\M_g$, $\A_g$, $\RM_g$, $\RA_g$ and $\RRM_g$.

We will start with the boundary for $\A_g$ and $\RA_g$, which is simpler than that of $\M_g$ and its covers.  The space $\A_g$ has a distinguished compactification, the Satake compactification, which all other compactifications map to.  We will only need the corank 1 part, which we will denote by $\As_g$ and we note that $\partial \As_g\cong \A_{g-1}$.  There is a larger class of compactifications called toroidal compactifications \cite{MR2590897}.  We will only need the corank 1 part here as well, and for all toroidal compactifications, this is the same, so we write $\At_g=\Bl_{\A_{g-1}}\As_g$.  This has the property that $\partial\At_g$ is a divisor, and is isomorphic to $\X_{g-1}$, the universal Kummer variety over $\A_{g-1}$.  We will interpret it as parameterizing $\CC^\times$-extensions of abelian varieties of dimension $g-1$.

The space $\RA_g$ has a slightly more complex boundary, and for reference we reproduce results in \cite{MR903385} describing it.  There are both Satake and toroidal compactifications, for which we will again only need the corank 1 part.  In either case, there are three components, depending on the relationship between the point of order two, $\mu$, and the vanishing cycle $\delta$ taken modulo two:
\begin{itemize}
 \item $\partial^I\RAs_g\cong \A_{g-1}$ and $\partial^I\RAt_g\cong \X_{g-1}$ are the components where $\delta=\mu$.
 \item $\partial^{II}\RAs_g\cong\RA_{g-1}$ and $\partial^{II}\RAt_g\cong \RX_{g-1}$ (The fiber of $\RX_{g-1}\to \RA_{g-1}$ over $(X,\mu)\in \RA_{g-1}$ is the double cover of $X$ determined by $(\mu)^\perp$) are the components where $\delta\neq\mu$ and the two points are orthogonal.
 \item $\partial^{III}\RAs_g\cong \A_{g-1}$ and $\partial^{III}\RAt_g\cong \X_{g-1}$ are the components where $\delta$ and $\mu$ are not orthogonal.
\end{itemize}
In fact, we have

\begin{proposition}[{\cite[Proposition 2.3.2]{MR903385}}]
{\ \\}
The projection $\RAt_g\to \RAs_g$ is simply ramified along $\partial^{III}\RAt_g$ and unramified on the other boundary components.  (Note that the map $\partial^{III}\RAt_g\to \partial^{III}\RAs_g$ is given by $\RX_{g-1}\stackrel{\times 2}{\to}\RX_{g-1}\to \RA_{g-1}$ with the first map multiplication by two along the fibers)
\end{proposition}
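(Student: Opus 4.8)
The plan is to reduce the statement to a local analytic computation at a generic point of each boundary component, using the description of $\RA_g$ and its toroidal compactification near corank 1 degenerations. Recall that $\At_g \to \As_g$ is the blowup of $\A_{g-1}$ inside $\As_g$, so near the boundary the toroidal coordinate transverse to $\partial\At_g$ is essentially the square root of the corank 1 degeneration parameter $t$ (the entry of $\Omega$ that tends to $i\infty$ appears through $q = e^{2\pi i \tau}$, and the toroidal boundary chart uses $q$ itself while the Satake chart only sees the limiting abelian variety). The key point is that $\RA_g$ sits over $\A_g$ as an étale cover away from the boundary, and the three boundary components $\partial^I, \partial^{II}, \partial^{III}$ are distinguished precisely by how the extra data $\mu$ interacts with the vanishing cycle $\delta$; this interaction is what governs whether the local toroidal parameter on $\RAt_g$ is a square root of, or equal to, the local toroidal parameter pulled back from $\At_g$.

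First I would set up local coordinates: near a generic corank 1 boundary point, write $\Omega = \begin{pmatrix} \Omega' & z \\ \leftexp{t}{z} & \tau \end{pmatrix}$ with $\Omega' \in \HH_{g-1}$ and $\tau \to i\infty$, and let $q = e^{\pi i \tau}$ be the natural toroidal coordinate on $\At_g$ (so that $\At_g \to \As_g$ is $q \mapsto q^2$ transverse to the boundary, exhibiting the simple ramification of the Satake compactification of $\A_g$ along $\partial\As_g$ — though note $\A_g$'s own toroidal map is unramified; the ramification statement in the proposition is specific to the $\RA$ side). Then I would analyze the cover $\RGamma_g \subset \Gamma_g$: the stabilizer of the degeneration, intersected with the condition fixing $v$ modulo $2$, acts on the local coordinate, and the quotient by this stabilizer is what produces the three components with their respective local charts.

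The core of the argument is the case analysis on the three components. On $\partial^I$ (where $\delta \equiv \mu$) and on $\partial^{II}$ (where $\delta \not\equiv \mu$ but they are orthogonal), I would show that the element of $\Gamma_g$ realizing the loop around the boundary already lies in $\RGamma_g$, so the local toroidal coordinate on $\RAt_g$ maps isomorphically (degree 1, unramified) onto that of $\RAs_g$; the extra structure — an honest abelian variety of dimension $g-1$ with a point of order two in case $\partial^{II}$, versus the $\CC^\times$-extension data — matches up on both sides. On $\partial^{III}$ (where $\delta$ and $\mu$ are \emph{not} orthogonal), the Weil pairing obstruction forces the relevant loop to require a half-integer shift in order to preserve $v$, so one must pass to a double cover in the toroidal coordinate: the local parameter on $\RAt_g$ is a square root of the one on $\RAs_g$, which is exactly simple ramification. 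The parenthetical claim, that $\partial^{III}\RAt_g \to \partial^{III}\RAs_g$ factors as $\RX_{g-1} \xrightarrow{\times 2} \RX_{g-1} \to \RA_{g-1}$, then follows by tracking what the ramified coordinate does to the boundary divisor itself: multiplication by $2$ along the $\CC^\times$-fiber of the universal extension is precisely the restriction to the exceptional divisor of a map that is $q \mapsto q^2$ transversally.

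The main obstacle I anticipate is bookkeeping the semidirect-product structure of the boundary stabilizer in $\Gamma_g$ — the unipotent radical (translations in $z$ and the shearing in $\tau$) interacts nontrivially with the Levi part $\Gamma_{g-1}$, and isolating exactly which sublattice of loops preserves $v \bmod 2$ in each of the three congruence situations requires care. A secondary subtlety is checking that the identification of $\partial^{III}\RAt_g$ with $\RX_{g-1}$ (rather than with some other cover) is compatible with the $\times 2$ map, i.e. that the double cover of $X$ cut out by $\mu^\perp$ is the correct target; this I would pin down by a direct computation with the theta-characteristic description, or simply cite the relevant identification in \cite{MR903385}. Once the local picture on each component is established, the global statement is immediate since these are the only corank 1 boundary components and the maps are finite.
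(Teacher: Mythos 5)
The paper does not prove this proposition: it is quoted verbatim from Donagi \cite{MR903385} as background, so there is no in-paper argument to compare against. Judged on its own, your proposal is the standard (and surely the intended) argument, and you identify the correct mechanism: near a corank-one degeneration write $\Omega=\left(\begin{smallmatrix}\Omega' & z\\ \leftexp{t}{z} & \tau\end{smallmatrix}\right)$ and let $T\in\Gamma_g$ be the unipotent generator $\tau\mapsto\tau+1$ of the rank-one part of the boundary stabilizer; then $T\cdot v\equiv v \bmod \ZZ^{2g}$ exactly when the vanishing cycle $\delta$ pairs trivially with $\mu$ (cases I and II), while in case III only $T^2$ lies in $\RGamma_g$, so the transverse toroidal parameter on $\RAt_g$ is $e^{\pi i\tau}$ rather than $e^{2\pi i\tau}$ --- simple ramification --- and restricting this index-two discrepancy to the boundary divisor is exactly the factor $\RX_{g-1}\xrightarrow{\times 2}\RX_{g-1}$ in the parenthetical.

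Two caveats. First, your aside that $\At_g\to\As_g$ is ``$q\mapsto q^2$ transverse to the boundary, exhibiting the simple ramification of the Satake compactification of $\A_g$'' is not right, and you half-retract it yourself: for $\A_g$ the transverse parameter is $q=e^{2\pi i\tau}$ and $\At_g\to\As_g$ is an unramified contraction of the boundary divisor. The ramification asserted in the proposition lives entirely in the comparison of $\RGamma_g$ with $\Gamma_g$ (equivalently, in the finite cover $\RAt_g\to\At_g$); since $\RAt_g\to\RAs_g$ contracts a divisor, ``ramification along $\partial^{III}$'' only makes sense via that comparison, and its trace on the boundary is precisely the $\times 2$ statement. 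Second, the proposal is an outline: the parabolic bookkeeping you flag --- determining which sublattice of the boundary stabilizer fixes $v$ in each of the three cases, and matching the resulting quotients of the divisor with $\X_{g-1}$ or $\RX_{g-1}$ --- is where essentially all the work is, and you defer it, reasonably, to the cited source.
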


For $\M_g$ and its covers, the boundary is somewhat more complex.  We will only be using partial compactifications contained in the Deligne-Mumford stable curve compactification \cite{MR0262240}.  In this compactification, we have $\delta_i$ for $1\leq i\leq \left\lfloor\frac{g-1}{2}\right\rfloor$ consisting of reducible curves with a component of genus $i$ meeting a component of genus $g-i$ at a point.  There is also a component $\delta_0$ consisting of irreducible curves of geometric genus $g-1$ with a single node\cite{MR1631825}.

The situation for $\RMbar_g$ is slightly more complex.  Each boundary component of $\Mbar_g$ splits into three components.  The reducible components $\delta_i$ give us $\partial_i$, $\partial_{g-i}$ and $\partial_{i,g-i}$, consisting of pairs $(C,\mu)$ where $\mu$ is supported on the curve of genus $i$, $g-i$ or both.  For $\delta_0$, we have $\partial^I$, $\partial^{II}$ and $\partial^{III}$ as with $\RAs_g$.  We also, for later reference, describe the corresponding double covers:

\begin{proposition}[{\cite[Examples 1.9]{MR1188194}}]
{\ \\}
The generic element of the boundary components of $\RMbar_g$ correspond to double covers as follows:
\begin{itemize}
 \item $\partial_i$ for $1\leq i\leq g-1$: the base curve is $C=C_i\cup_p C_{g-i}$ and the double cover is $\tilde{C}=C_{g-i}\cup_{p_1}\tilde{C}_i\cup_{p_2} C_{g-i}$.
 \item $\partial_{i,g-i}$ for $1\leq \left\lfloor\frac{g-1}{2}\right\rfloor$: the base curve is $C=C_i\cup_{p\sim q} C_{g-i}$ and $\tilde{C}=\tilde{C}_{g-i}\cup_{p_1\sim q_1,p_2\sim q_2} \tilde{C}_i$.
 \item $\partial^I$: the base curve is $C=X/p\sim q$ and the double cover is\\ $\tilde{C}=X_0\coprod X_1/p_0\sim q_1,p_1\sim q_0$.  This is called a \textit{Wirtinger double cover}.
 \item $\partial^{II}$: the base curve is $C=X/p\sim q$ and the double cover is\\ $\tilde{C}=\tilde{X}/p_0\sim q_0,p_1\sim q_1$ where $\tilde{X}\to X$ is an unramified double cover.  This is called an \textit{unallowable double cover}.
 \item $\partial^{III}$: the base curve is $C=X/p\sim q$ and the double cover is $\tilde{C}=\tilde{X}/\tilde{p}\sim \tilde{q}$ where $\tilde{X}\to X$ is a double cover ramified over $p$ and $q$.  This is called a \textit{Beauville double cover}.
\end{itemize}
\end{proposition}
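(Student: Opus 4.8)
The plan is to interpret $\RMbar_g$ as the moduli space of connected \emph{admissible} double covers $\pi\colon\tilde C\to C$ of stable genus $g$ curves --- the natural way to extend the dictionary $(C,\mu)\leftrightarrow(\tilde C,\iota)$ from the smooth locus over the boundary --- and then to enumerate, at the generic point of each boundary divisor of $\Mbar_g$, all such covers. The local input is the behaviour of $\pi$ over a node $x$ of $C$: since $\pi$ is étale over the smooth locus and admissible, over $x$ exactly one of two things occurs. Either $\pi$ is étale at $x$, so $\pi^{-1}(x)$ is a pair of distinct points of $\tilde C$ (each possibly a node) and $\pi$ carries each branch of $x$ isomorphically; or $\pi$ is ramified at $x$, so $\pi^{-1}(x)$ is a single node of $\tilde C$ and $\pi$ is ramified over each of the two branches of $x$, which must match by admissibility. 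The global constraints are that $\tilde C$ be connected, that $p_a(\tilde C)=2g-1$, and that the deck involution $\iota$ be fixed-point-free with $\tilde C/\iota=C$; the proof becomes a bookkeeping exercise subject to these.

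For the reducible divisor $\delta_i$ the generic curve is $C=C_i\cup_p C_{g-i}$ with $C_i,C_{g-i}$ general smooth of genus $i$, $g-i$ meeting at one node $p$. A cover ramified at $p$ would restrict over $C_i$ to a double cover of the smooth curve $C_i$ branched only at $p$, which is impossible since the branch divisor of a double cover has even degree; hence $\pi$ is étale at $p$, so it is a genuine connected étale double cover, classified by a nonzero $\mu=(\mu_i,\mu_{g-i})\in\J(C_i)[2]\times\J(C_{g-i})[2]=\J(C)[2]$ (one node joining two components gives no toric part). If $\mu$ is supported only on $C_i$ (the divisor $\partial_i$), then $\pi$ is trivial over $C_{g-i}$, giving two disjoint copies of $C_{g-i}$, and equals the connected cover $\tilde C_i\to C_i$ over $C_i$; attaching the two copies to the two preimages $p_1,p_2\in\tilde C_i$ of $p$ gives $\tilde C=C_{g-i}\cup_{p_1}\tilde C_i\cup_{p_2}C_{g-i}$, with $\iota$ exchanging the two copies and acting on $\tilde C_i$ by its deck involution. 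Support only on $C_{g-i}$ is symmetric ($\partial_{g-i}$), and letting $i$ run over $1\le i\le g-1$ absorbs both. If $\mu$ is supported on both pieces (the divisor $\partial_{i,g-i}$), then $\pi$ is the connected cover over each component and we glue $\tilde C_i$ to $\tilde C_{g-i}$ along the two preimages of $p$; the two possible matchings differ by the deck involution of one factor, hence give isomorphic covers, and $\tilde C=\tilde C_{g-i}\cup_{p_1\sim q_1,\,p_2\sim q_2}\tilde C_i$. In each case one checks connectedness, $p_a(\tilde C)=2g-1$, and freeness of $\iota$, and notes that $\mu\ne 0$ is exactly what connectedness demands.

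For the irreducible divisor $\delta_0$ the generic curve is $C=X/(p\sim q)$ with $X$ general smooth of genus $g-1$ and one node. If $\pi$ is étale at the node, its restriction to $X\setminus\{p,q\}$ is an étale double cover of a twice-punctured curve, extending over $X$ either to the trivial cover or to a connected one. In the trivial case we get two copies $X_0,X_1$ of $X$, and connectedness of $\tilde C$ forces the crossed gluing $p_0\sim q_1$, $p_1\sim q_0$: this is the Wirtinger cover $\partial^I$, in which $\mu$ is the vanishing cycle $\delta$, i.e.\ it generates the kernel of $\J(C)[2]\to\J(X)[2]$, the $\GG_m$-part. In the connected case we get the étale double cover $\tilde X\to X$ attached to a nonzero $\bar\mu\in\J(X)[2]$, glued to itself along the preimages of $p$ and $q$: this is the unallowable cover $\partial^{II}$, the case $\mu\ne\delta$ with $\mu,\delta$ orthogonal (there are two ways to glue, giving distinct points of $\partial^{II}$ of the same type). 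If $\pi$ is ramified at the node, then $\tilde X\to X$ is a double cover branched over exactly $p$ and $q$, and gluing the unique preimages $\tilde p\sim\tilde q$ gives the Beauville cover $\partial^{III}$, the non-orthogonal case --- consistent with $\RAt_g\to\RAs_g$ being ramified precisely over $\partial^{III}$. The labels $\partial^I,\partial^{II},\partial^{III}$ are pinned down by comparison with the description of $\partial\RAs_g$ recalled above, to which $\RMbar_g$ maps via the generalized Jacobian.

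I expect the main obstacle to be the local analysis at the nodes: proving cleanly that the only admissible local types are the two described, and then matching all the combinatorial gluing data --- the matching ambiguities in the reducible case, the two gluings over $\delta_0$ --- with the $\delta$-versus-$\mu$ trichotomy, so that the resulting list is at once exhaustive and non-redundant. One should also remark that each configuration does occur and sweeps out a divisor (immediate from a dimension count once the generic member is written down), so that these are indeed all the boundary components, and make precise the genericity hypotheses --- $C_i,C_{g-i},X$ general and $p,q$ general --- ensuring that the curves carry no unexpected automorphisms and that $\tilde C$ is recovered unambiguously from $(C,\mu)$.
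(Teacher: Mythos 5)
Your proposal is correct in substance and follows the same route as the cited source: the paper offers no proof of this proposition (it is quoted verbatim from Donagi's \emph{Fibres of the Prym map}, Examples 1.9), and the standard argument there is exactly your reduction to admissible double covers, the two local types over a node, and the case enumeration over $\delta_i$ and $\delta_0$; your identification of the three $\delta_0$-cases with the $\mu$-versus-$\delta$ trichotomy, and of the two gluings in the unallowable and $\partial_{i,g-i}$ cases, is also right.

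One internal inconsistency to repair: you state as a global constraint that the deck involution $\iota$ is fixed-point-free, but the Beauville cover you (correctly) produce in the ramified case has $\iota$ fixing the node of $\tilde C$ while exchanging its two branches. The correct admissibility condition is that $\iota$ is fixed-point-free on the smooth locus and that at any fixed node the two branches are swapped; as literally stated, your constraint would exclude $\partial^{III}$ and make the enumeration incomplete. Since your local analysis at the node already allows the ramified type, this is a mis-stated hypothesis rather than a flaw in the argument, but it should be fixed for the two halves of the proof to be consistent.
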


\subsection{The Prym map}

In this section, we will define, describe and collect useful results about the Prym map.

Let $C$ be a genus $g$ curve and $\pi\colon\tilde{C}\to C$ be any morphism of curves.  Then we can define a map $\Nm_\pi\colon\J(\tilde{C})\to\J(C)$ by writing $D\in \J(\tilde{C})$ as $D=\sum n_P P$ and setting $\Nm_\pi(D)=\sum n_P \pi(P)$.  If $\pi$ is surjective, then so is $\Nm_\pi$.  This is especially useful in the case where $\pi$ is an unramified double cover.  In this case, the kernel is $g-1$ dimensional and has two components.  We define $\P_g(C,\tilde{C})=\ker^0\Nm_\pi$, the connected component of the identity of the kernel of the norm map, and call it the Prym variety of the double cover.  The principal polarization on $\J(\tilde{C})$ restricts to twice a principal polarization on $\P(C,\tilde{C})$\cite[Corollary 2]{MR0379510}, thus giving $\P(C,\tilde{C})$ the natural structure of a principally polarized abelian variety, giving us a map $\P_g:\RM_g\to\A_{g-1}$.  Additionally, we have:

\begin{lemma}[Mumford Sequence{\cite[Corollary 1]{MR0379510}}]
{\ \\}
Let $(C,\mu)\in \RM_g$.  Then we have a short exact sequence \[0\to \langle\mu\rangle\to \mu^\perp\to \P(C,\mu)[2]\to 0\] where we are taking the orthogonal complement with respect to the Weil pairing.

If $\nu\in \mu^\perp$ we will denote the image of $\nu$ in $\P(C,\mu)[2]$ by $\bar{\nu}$.
\end{lemma}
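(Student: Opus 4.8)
The plan is to realize all the groups involved inside $H_1(\tilde{C},\ZZ/2\ZZ)$ and $H_1(C,\ZZ/2\ZZ)$, where the norm and pullback maps become transpose to one another with respect to the Weil (intersection) pairing, and then to run a dimension count together with an identification of kernels and images. Concretely, write $\pi^*\colon \J(C)[2]\to \J(\tilde{C})[2]$ and $\Nm_\pi\colon \J(\tilde{C})[2]\to \J(C)[2]$ for the induced maps on $2$-torsion. I would first record the standard facts: $\ker\pi^*=\langle\mu\rangle$ (a point of $C$ of order two pulls back to $0$ on $\tilde{C}$ precisely when it is the class of the cover itself, since $\tilde{C}\times_C\tilde{C}$ is disconnected), and $\Nm_\pi\circ\pi^*$ is multiplication by $2$, hence zero on $2$-torsion; so $\im\pi^*\subset\ker\Nm_\pi$. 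By the see-saw/projection-formula adjointness of $\pi^*$ and $\Nm_\pi$ under the Weil pairing, one gets $(\im\pi^*)^\perp=\ker\Nm_\pi$ inside $\J(\tilde{C})[2]$, which pins down $\dim_{\FF_2}\ker\Nm_\pi=2g-(2g-1)=2g-1$ since $\dim\im\pi^*=2g-1$.

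Next I would describe $\P(C,\mu)[2]$. Because $\Nm_\pi$ restricted to $\P(C,\tilde C)$ is (up to the factor dividing the polarization) the zero map, $\P(C,\tilde C)\subset(\ker\Nm_\pi)^0$, and the exact sequence $0\to \P(C,\tilde C)\to \ker\Nm_\pi\to \langle\mu\rangle\to 0$ of components (the component group of $\ker\Nm_\pi$ being $\ZZ/2$, detected by $\mu$) lets me compute $2$-torsion. Rather than chase that directly, the cleaner route is: $\pi^*$ maps $\mu^\perp\subset \J(C)[2]$ into $\P(C,\tilde C)[2]$, with kernel exactly $\ker\pi^*\cap\mu^\perp=\langle\mu\rangle$ (note $\mu\in\mu^\perp$ since the Weil pairing is alternating, so $\langle\mu,\mu\rangle=0$). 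This gives the left-exactness $0\to\langle\mu\rangle\to\mu^\perp\xrightarrow{\pi^*}\P(C,\mu)[2]$. For surjectivity onto $\P(C,\mu)[2]$ I would count: $\dim\mu^\perp=2g-1$, so the image has dimension $2g-2$; on the other hand $\dim_{\FF_2}\P(C,\mu)[2]=2(g-1)=2g-2$ since $\P(C,\mu)$ is a principally polarized abelian variety of dimension $g-1$. Hence the injection of the $(2g-2)$-dimensional image is onto, and the sequence
\[
0\to \langle\mu\rangle\to \mu^\perp\to \P(C,\mu)[2]\to 0
\]
is exact.

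The one genuinely delicate point — and the step I expect to be the main obstacle — is verifying that $\pi^*(\mu^\perp)$ actually lands in $\P(C,\tilde C)$, the \emph{connected component} of $\ker\Nm_\pi$, and not in the other component. Equivalently, one must show that the component of $\ker\Nm_\pi$ is detected by pairing against $\mu$ (viewed inside $\J(\tilde C)[2]$ via $\pi^*$), and that $\nu\in\mu^\perp$ is precisely the condition $\langle\pi^*\nu,\pi^*\mu\rangle_{\tilde C}=0$, i.e.\ that $\pi^*$ is compatible with the Weil pairings up to the factor of $2$ coming from $\pi$ having degree $2$ — so that $\langle\pi^*\nu,\pi^*\mu\rangle_{\tilde C}=2\langle\nu,\mu\rangle_C=0$ automatically in $\FF_2$, and one needs the finer statement that lands it in the right component. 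This is where I would invoke the projection formula relating $\pi^*$, $\Nm_\pi$ and the theta divisors, or alternatively cite the topological description via $H_1(\tilde C,\ZZ/2)$ and the transfer map; with that compatibility in hand, everything above is forced and the sequence follows. The final remark, fixing notation $\bar\nu$ for the image of $\nu$, needs no argument.
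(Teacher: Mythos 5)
Your outline is the standard argument (it is essentially the proof in Mumford's \emph{Prym varieties I}, which is all the paper itself does here: the lemma is quoted with a citation and no proof is given). The skeleton is right: $\ker\pi^*=\langle\mu\rangle$, the adjunction $\langle\pi^*x,y\rangle_{\tilde C}=\langle x,\Nm_\pi y\rangle_C$ giving $(\im\pi^*)^\perp=\ker(\Nm_\pi|_{[2]})$, left-exactness from $\langle\mu\rangle=\ker\pi^*\cap\mu^\perp$ (using that the pairing is alternating so $\mu\in\mu^\perp$), and surjectivity by comparing $\dim_{\FF_2}\pi^*(\mu^\perp)=2g-2$ with $\dim_{\FF_2}\P(C,\mu)[2]=2(g-1)$. (Minor slip: the displayed arithmetic $2g-(2g-1)=2g-1$ should read $(4g-2)-(2g-1)=2g-1$.)

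The one genuine gap is exactly the step you flag and then defer: that $\pi^*\nu$ lands in the \emph{identity} component $\P(C,\mu)$ of $\ker\Nm_\pi$ precisely when $\langle\nu,\mu\rangle_C=0$. Without this, the dimension count proves nothing, since a priori $\pi^*(\mu^\perp)$ could meet the other component of $\ker\Nm_\pi$ (which contains just as many $2$-torsion points as $\P(C,\mu)$ does). Moreover, your description of how to fix it is garbled: you propose to detect the component by ``pairing against $\mu$ viewed inside $\J(\tilde C)[2]$ via $\pi^*$,'' but $\pi^*\mu=0$, so every such pairing upstairs vanishes identically and detects nothing --- as your own computation $\langle\pi^*\nu,\pi^*\mu\rangle_{\tilde C}=2\langle\nu,\mu\rangle_C=0$ shows. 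The correct statement is intrinsically about the \emph{downstairs} pairing: in $H_1(C,\ZZ/2\ZZ)$ the class $\mu$ is the monodromy homomorphism of the cover, a cycle $\nu$ has connected or disconnected preimage in $\tilde C$ according to $\langle\nu,\mu\rangle_C$, and this is what separates the two components of $\ker\Nm_\pi$. That topological transfer argument (or Mumford's divisor-theoretic version of it) is the actual content of the lemma and needs to be carried out, not invoked; everything else in your write-up is routine once it is in place.
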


This allows us to define also $\RP_g\colon\RRM_g\to\RA_{g-1}$ by\\ $\RP_g(C,\mu,\nu)=(\P(C,\mu),\nu)$.

Unfortunately, as defined, the Prym map is not proper, so we must study the Prym map on the boundary of $\RMbar_g$.

\begin{proposition}[{\cite[Example 1.9]{MR1188194}}]
\label{BoundaryPrym}
{\ \\}
The Prym map extends to the general point of the boundary of $\RMbar_g$ as follows along each component:

\begin{itemize}
 \item $\partial^I$: For a Wirtinger double cover, $\P_g(C,\tilde{C})\cong \J_{g-1}(X)$.
 \item $\partial^{II}$: For an unallowable double cover, $\P_g(C,\tilde{C})$ is the $\CC^\times$-extension of $\P_{g-1}(X,\tilde{X})$ given by $p_0-p_1+q_0-q_1$ in $\P_{g-1}(X,\tilde{X})$
 \item $\partial^{III}$: The Prym of a Beauville double cover is a principally polarized abelian variety, and this and other ramified Prym maps are studied in \cite{MR2956039}.  It is isogenous to $\P_{g-1}(X,\tilde{X})$ but is not isomorphic to it.
 \item $\partial_i$ fits into a diagram:
 \[
\begin{xy}
(0,0)*+{\RM_i\times \M_{g-i}}="a";
(30,15)*+{\A_{i-1}\times \A_{g-i}}="b";
(0,15)*+{\partial_i}="c";
{\ar "c";"a"};
{\ar^{\P_g} "c";"b"};
{\ar_{\P_i\times \J_{g-i}} "a";"b"};
\end{xy}
 \]
 \item $\partial_{i,g-i}$ fits into a diagram:
 \[
\begin{xy}
(0,0)*+{\RM_i\times \RM_{g-i}}="a";
(40,15)*+{\partial\At_{g-1}}="b";
(40,0)*+{\A_{k-1}\times \A_{g-k-1}}="c";
(0,15)*+{\partial_{i,g-i}}="d";
{\ar_{\P_i\times\P_{g-i}} "a";"c"};
{\ar^{\P_g} "d";"b"};
{\ar "d";"a"};
{\ar "b";"c"};
\end{xy}
 \]
\end{itemize}
\end{proposition}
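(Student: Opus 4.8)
The plan is to treat each assertion by degenerating along a one-parameter family and identifying the limit abelian or semiabelian variety. First I would fix a boundary stratum of $\RMbar_{g}$ and choose a general one-parameter smoothing: a family of admissible double covers $\tilde{\mathcal{C}}\to\mathcal{C}\to\Delta$ over a disk, smooth over $\Delta^{\times}$, with central fibre the prescribed degenerate cover $\tilde C_{0}\to C_{0}$. Taking relative generalized Jacobians --- which are semiabelian over $\Delta$ since the fibres are nodal curves --- and the relative norm $\Nm\colon\J(\tilde{\mathcal{C}}/\Delta)\to\J(\mathcal{C}/\Delta)$, the identity component $\ker^{0}\Nm$ is again semiabelian over $\Delta$; its generic fibre is the Prym, and its central fibre $P_{0}$, the \emph{generalized Prym} of $\tilde C_{0}\to C_{0}$, sits in an exact sequence $1\to T\to P_{0}\to A\to 0$ with $A$ a principally polarized abelian variety and $T$ a torus of rank at most one. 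Since $\ker^{0}\Nm$ is the connected N\'eron model of the generic Prym, and N\'eron models describe the universal semiabelian variety over the corank-one boundary of a toroidal compactification $\At_{g-1}$ \cite{MR2590897}, the extension of $\P_{g}$ sends the general point of the stratum to the point of $\At_{g-1}$ determined by $P_{0}$: namely $A\in\A_{g-1}$ if $T=0$, and the point of $\X_{g-2}=\partial\At_{g-1}$ given by $A$ together with the extension class of $P_{0}$ in $\operatorname{Ext}^{1}(A,\CC^{\times})\cong\widehat A$ if $T=\CC^{\times}$. The valuative criterion makes $P_{0}$ independent of the chosen family, so this computes $\P_{g}$ at the general point of the stratum. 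What remains is, stratum by stratum: read off the toric rank of $P_{0}$ --- the difference of the first Betti numbers of the dual graphs of $\tilde C_{0}$ and of $C_{0}$, using that $\Nm$ is surjective and preserves toric parts --- identify $A$, and, when $T\neq0$, compute the extension class from the clutching data of $\tilde C_{0}$.

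For the three good-reduction strata $T=0$, which one sees from the toric ranks. On $\partial_{i}$ the dual graphs of $\tilde C_{0}=C_{g-i}'\cup\tilde C_{i}\cup C_{g-i}''$ and of $C_{0}$ are trees, so $P_{0}$ is abelian; as $\iota$ exchanges the two copies of $C_{g-i}$ and restricts to the covering involution of $\tilde C_{i}$, the anti-invariant part of $\J(\tilde C_{0})=\J(C_{g-i})^{2}\times\J(\tilde C_{i})$ is the antidiagonal $\cong\J(C_{g-i})$ times $\P(C_{i},\tilde C_{i})$, giving the displayed diagram with $\P_{i}\times\J_{g-i}$. On $\partial^{III}$ both $\tilde C_{0}=\tilde X/\tilde p\sim\tilde q$ and $C_{0}=X/p\sim q$ have a single loop as dual graph, so $P_{0}$ has toric rank zero; that the resulting ppav is isogenous but not isomorphic to $\P_{g-1}(X,\tilde X)$ is a statement about ramified Prym maps, for which I would cite \cite{MR2956039}. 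On $\partial^{I}$ the dual graph of $\tilde C_{0}=X_{0}\sqcup X_{1}/p_{0}\sim q_{1},\,p_{1}\sim q_{0}$ is a $2$-cycle and $C_{0}$ has toric rank one, so again $P_{0}$ is abelian; since $\iota$ exchanges the two sheets, $A$ is the $\iota$-anti-invariant part of the abelian quotient $\J(X_{0})\times\J(X_{1})\cong\J(X)^{2}$ of $\J(\tilde C_{0})$, namely the antidiagonal $\cong\J(X)$, and a check of the limiting polarization identifies $P_{0}$ with $\J_{g-1}(X)$.

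The strata $\partial^{II}$ and $\partial_{i,g-i}$ carry $T=\CC^{\times}$, and here the substance is the extension class. For $\partial^{II}$, $\tilde C_{0}=\tilde X/p_{0}\sim q_{0},\,p_{1}\sim q_{1}$ with $\tilde X\to X$ the connected unramified cover; its dual graph is a wedge of two loops $N_{1}=(p_{0}\sim q_{0})$ and $N_{2}=(p_{1}\sim q_{1})$ exchanged by $\iota$, so $\J(\tilde C_{0})$ is an extension of $\J(\tilde X)$ by $(\CC^{\times})^{2}$ whose extension datum carries $N_{k}$ to the corresponding node difference class in $\J(\tilde X)$. Taking anti-invariants under $\iota$ collapses $(\CC^{\times})^{2}$ to the antidiagonal $\CC^{\times}$, leaves $A=\P(X,\tilde X)$, and carries the surviving cocharacter to the image in $\P(X,\tilde X)$ of a degree-zero class of the shape $\pm(p_{0}-p_{1})\pm(q_{0}-q_{1})$, which the $\iota$-equivariance of the smoothing pins down to $p_{0}-p_{1}+q_{0}-q_{1}$ (and this indeed lies in $\ker\Nm$), exactly the datum in the statement. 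For $\partial_{i,g-i}$ the same computation with $\tilde C_{0}=\tilde C_{i}\cup\tilde C_{g-i}$ glued at two nodes gives $T=\CC^{\times}$ (the dual graph of $C_{0}=C_{i}\cup_{p}C_{g-i}$ is a tree, so $\Nm$ kills the rank-one torus of $\tilde C_{0}$), $A=\P(C_{i},\tilde C_{i})\times\P(C_{g-i},\tilde C_{g-i})$, and extension class the pair of node difference classes in $\P_{i}\times\P_{g-i}$; this is a point of $\X_{g-2}=\partial\At_{g-1}$ lying over $\A_{i-1}\times\A_{g-i-1}$, which is the content of the displayed diagram.

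The hard part will be these two extension-class computations, and in particular pinning down signs and orientations: one must fix a precise identification of $\operatorname{Ext}^{1}(A,\CC^{\times})$ with $\widehat A$ in terms of the Picard--Lefschetz/clutching data of the smoothing, check that an $\iota$-equivariant smoothing forces exactly the orientations producing $p_{0}-p_{1}+q_{0}-q_{1}$ rather than a variant, and verify that this is the datum actually recorded by the toroidal boundary $\X_{g-2}$ of $\At_{g-1}$. By contrast, the good-reduction cases and the reduction of the whole proposition to these local semiabelian computations via N\'eron models are comparatively formal; this is essentially the route of \cite{MR1188194}, building on \cite{MR903385}.
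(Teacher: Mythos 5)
The paper offers no proof of this proposition---it is quoted verbatim from Donagi's \emph{The fibres of the Prym map} (Example 1.9), whose argument is exactly the semiabelian degeneration analysis you outline: pass to the connected component of the kernel of the relative norm over a one-parameter smoothing, read off toric ranks from dual graphs, and compute extension classes from the clutching data. Your plan reproduces that route faithfully (including correctly isolating the sign/orientation conventions in the $\partial^{II}$ and $\partial_{i,g-i}$ extension classes as the only delicate point), so it is essentially the same approach as the cited source.
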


Unfortunately, $\P_g$ is only a rational map, it cannot be extended to all over $\RMbar_g$.  However, if we look at $\P_g^{-1}(\A_{g-1})=(\RMbar_g)^{allowable}$, then we have

\begin{theorem}[Prym is Proper {\cite[Theorem 1.1]{MR594627}}, {\cite[Proposition 6.3]{MR0572974}}]
\label{PrymisProper}
{\ \\}
For all $g$, $(\RMbar_g)^{allowable}\to \A_{g-1}$ is proper and for $g\leq 6$ it is surjective.
\end{theorem}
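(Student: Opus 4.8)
The plan is to establish properness via the valuative criterion --- reducing, by stable reduction for admissible double covers, to the statement that the limit Prym of a degenerating family is an honest abelian variety exactly when the limit cover is allowable --- and then to deduce surjectivity for $g\le 6$ from a dimension count together with a computation of the codifferential of $\P_g$.

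\emph{Properness.} Let $R$ be a discrete valuation ring with fraction field $K$. A $K$-point of $(\RMbar_g)^{allowable}$ is an admissible double cover $\tilde C_K\to C_K$ (with its order-two point) whose Prym $P_K=\P_g(C_K,\tilde C_K)$ is an abelian variety, together with a compatible $R$-point of $\A_{g-1}$, i.e.\ an abelian scheme $\mathcal P_R$ over $R$ with generic fibre $P_K$. After a finite extension $R\subset R'$ --- harmless, since $\RMbar_g$ is separated, so any limit we produce over $R'$ descends --- stable reduction for curves and for their double covers yields an admissible double cover $\tilde{\mathcal C}_{R'}\to\mathcal C_{R'}$ over $R'$ with stable central fibre, hence an $R'$-point of $\RMbar_g$. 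The associated relative Prym is a semiabelian scheme $\mathcal P^{sa}$ over $R'$ with generic fibre $P_{K'}$. But a semiabelian scheme over a discrete valuation ring with abelian generic fibre is the identity component of the N\'eron model of that generic fibre, hence is unique; since $\mathcal P_{R'}$ is another such extension, $\mathcal P^{sa}\cong\mathcal P_{R'}$, and in particular the central Prym is abelian. By Proposition~\ref{BoundaryPrym} and the classification of $\partial\RMbar_g$ recalled above, an admissible double cover with abelian limit Prym lies in a boundary stratum contained in $\P_g^{-1}(\A_{g-1})=(\RMbar_g)^{allowable}$; since this locus is open in $\RMbar_g$ and contains the central point of our $R'$-point, the $R'$-point of $\RMbar_g$ is in fact an $R'$-point of $(\RMbar_g)^{allowable}$, and its composite to $\A_{g-1}$ agrees with the given $R'$-point because the two coincide over $K'$ and $\A_{g-1}$ is separated. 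Uniqueness, and descent from $R'$ to $R$, follow from separatedness of $\RMbar_g$ and uniqueness in stable reduction. Hence $(\RMbar_g)^{allowable}\to\A_{g-1}$ is proper.

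\emph{Surjectivity for $g\le 6$.} A proper morphism is closed, so $\P_g\big((\RMbar_g)^{allowable}\big)$ is closed in the irreducible variety $\A_{g-1}$, and it suffices to show $\P_g|_{\RM_g}$ is dominant. Since $\dim\RM_g=3g-3$ and $\dim\A_{g-1}=\binom{g}{2}$, we have $3g-3\ge\binom{g}{2}$ exactly for $1\le g\le 6$, with equality only at $g=6$; so it is enough to produce one $(C,\mu)\in\RM_g$ at which $d\P_g$ is surjective. Under the standard identifications $T^*_{\P(C,\mu)}\A_{g-1}\cong\Sym^2 H^0(C,\omega_C\otimes\mu)$ and $T^*_{(C,\mu)}\RM_g\cong H^0(C,\omega_C^{\otimes 2})$, the codifferential of $\P_g$ is the multiplication map
\[
\Sym^2 H^0(C,\omega_C\otimes\mu)\longrightarrow H^0\big(C,(\omega_C\otimes\mu)^{\otimes 2}\big)=H^0(C,\omega_C^{\otimes 2}),
\]
and we must show it is injective for general $(C,\mu)$; when $g=6$ both sides have dimension $15$, so injectivity coincides with the bijectivity recording the generic finiteness of $\P_6$, and says that a general Prym-canonical curve of genus $6$ lies on no quadric. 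A nonzero element of the kernel is a quadric in $\PP^{g-2}=\PP H^0(C,\omega_C\otimes\mu)$ vanishing on the Prym-canonical image of $C$; a standard lemma lets one take it of rank at most $4$, and such a quadric is equivalent to a decomposition $\omega_C\otimes\mu\sim D_1+D_2$ with each of $|D_1|,|D_2|$ moving in at least a pencil. A Brill--Noether/Prym--Petri analysis shows the general $(C,\mu)$ with $g\le 6$ admits no such decomposition, which furnishes the required point and completes the proof.

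The main obstacle is this last curve-theoretic input: ruling out a rank-$\le 4$ quadric through the Prym-canonical curve for general $(C,\mu)$ of genus $\le 6$, where the condition on $\mu$ --- and not merely on $C$ --- is essential. The borderline case $g=6$, in which $\P_6$ must be exhibited as a dominant (hence, being proper, surjective) generically finite map onto $\A_5$, is the hard instance; for $g\ge 7$ the inequality $3g-3\ge\binom{g}{2}$ fails and $\P_g$ is no longer dominant for dimension reasons. A secondary technical point is making the relative-Prym/N\'eron-model comparison in the properness step uniform across all boundary strata of $\RMbar_g$ --- in particular the ramified ``Beauville'' covers and the $\CC^\times$-extensions lying over $\partial\At_{g-1}$, which is exactly where the word ``allowable'' does its work.
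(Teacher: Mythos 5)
This theorem is not proved in the paper: it is imported verbatim from Beauville \cite[Proposition 6.3]{MR0572974} and Donagi--Smith \cite[Theorem 1.1]{MR594627}, so there is no in-paper argument to compare against. Your sketch is, in outline, a faithful reconstruction of the proofs in those references: properness via stable reduction of admissible covers plus the N\'eron-model uniqueness of semiabelian extensions (which forces the limit Prym to be abelian, i.e.\ the limit cover to be allowable, whenever the image extends across $\A_{g-1}$), and surjectivity for $g\le 6$ from properness, irreducibility of $\A_{g-1}$, the dimension count $3g-3\ge\binom{g}{2}$, and dominance of $\P_g$ detected by injectivity of the codifferential $\Sym^2H^0(C,\omega_C\otimes\mu)\to H^0(C,\omega_C^{\otimes 2})$. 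All of the structural steps are sound; in particular the semiabelian/N\'eron comparison and the openness of the allowable locus (upper semicontinuity of toric rank) are correctly deployed.

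The one genuine gap is the step you yourself flag: you assert, but do not prove, that for general $(C,\mu)$ with $g\le 6$ the Prym-canonical multiplication map is injective --- equivalently, for $g=6$, that a general Prym-canonical curve in $\PP^4$ lies on no quadric. Your proposed route (reduce a kernel element to a quadric of rank $\le 4$, read off a decomposition $\omega_C\otimes\mu\sim D_1+D_2$ into moving pencils, and exclude this by a Prym--Petri/Brill--Noether argument) is exactly Beauville's, but the reduction to rank $\le 4$ and the exclusion of the pencil decomposition are each nontrivial lemmas, and the whole weight of the $g=6$ case rests on them; without carrying them out you have only reduced the cited theorem to another cited theorem. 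A small further point: equality in $3g-3\ge\binom{g}{2}$ also holds at $g=1$, not only at $g=6$, though this does not affect the argument.
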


Finally, we will need a theorem of Shokurov, based on Mumford's analysis of the singularities of theta divisors:

\begin{proposition}[\cite{MR641127}, stated as it appears in \cite{MR1013156}]
\label{notbielliptic}
{\ \\}
Assume $g\neq 4$.  If $\P_g(C,\tilde{C})$ is a Jacobian, then $C$ is hyperelliptic, trigonal or is a plane quintic with an even point of order two.
\end{proposition}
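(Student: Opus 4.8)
The plan is to follow Shokurov's strategy (\cite{MR641127}), which compares the singular locus of the would‑be Jacobian theta divisor with Mumford's description of the singularities of a Prym theta divisor, and then reduces everything to a Brill--Noether estimate. Suppose $\P_g(C,\tilde C)\cong\J(C')$ for a smooth curve $C'$ of genus $g-1$; write $\Theta\subset\J(C')$ for its theta divisor and $\Xi$ for the natural theta divisor on $\P(C,\tilde C)$ cut out by $\Theta_{\tilde C}$. By the Riemann singularity theorem $\operatorname{Sing}\Theta=W^1_{g-2}(C')$, which for $g\geq 5$ --- that is, $g-1\geq 4$ --- is nonempty of dimension at least $(g-1)-4=g-5$ for every such $C'$, and strictly larger when $C'$ is hyperelliptic (by Martens' theorem) or otherwise special. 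For $g=4$ one has $W^1_{g-2}(C')=W^1_1(C')=\emptyset$, so the singular‑locus method produces nothing; this is why that genus is excluded, and it is in any case not an exception worth making, every curve of genus four being trigonal (and $\A_3=\J_3$ forcing every genus four Prym to be a Jacobian). Transporting the estimate through the isomorphism, the hypothesis forces $\dim\operatorname{Sing}\Xi\geq g-5$.

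I would then invoke Mumford's classification of $\operatorname{Sing}\Xi$ (\cite{MR0379510}, with Beauville's refinements \cite{MR0572974}). Identifying $\P(C,\tilde C)$, up to translation, with the component $P^{+}\subset\{L\in\operatorname{Pic}^{2g-2}(\tilde C)\mid\Nm_\pi L\cong\omega_C\}$ on which $h^0$ is even, $\Xi$ coincides set‑theoretically with $\{L\in P^{+}\mid h^0(L)\geq 2\}$, its generic point has $h^0(L)=2$ and is a smooth point of $\Xi$, and $\operatorname{Sing}\Xi$ splits as $\operatorname{Sing}_{\mathrm{st}}\Xi\cup\operatorname{Sing}_{\mathrm{exc}}\Xi$ with stable part $\operatorname{Sing}_{\mathrm{st}}\Xi=\{L\in P^{+}\mid h^0(L)\geq 4\}$, while $\operatorname{Sing}_{\mathrm{exc}}\Xi$ consists of the ``exceptional'' bundles --- those with $h^0(L)=2$ but excess multiplicity, typically of the form $\pi^{\ast}M\otimes\mathcal{O}_{\tilde C}(B)$ with $M$ special on $C$ and $B$ effective of small degree (for instance $\pi^{\ast}$ of a theta characteristic). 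The key point is that $\operatorname{Sing}_{\mathrm{exc}}\Xi$ is small: its dimension is well below $g-5$ for $g$ large, by Mumford's bounds and their refinements, and it is directly analyzable in the small genera using the known geometry of $\RM_5$, $\RM_6$ and the Prym map there. Combined with the previous paragraph, this forces $\dim\operatorname{Sing}_{\mathrm{st}}\Xi\geq g-5$.

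The last and most computational step is the Brill--Noether reduction. We now have a family $Z$ of dimension at least $g-5$ of line bundles $L$ on $\tilde C$ with $\Nm_\pi L\cong\omega_C$ and $h^0(\tilde C,L)\geq 4$. Pushing $Z$ down to $C$ using $\pi_{\ast}\mathcal{O}_{\tilde C}=\mathcal{O}_C\oplus\mu$, the projection formula, and Clifford‑type bounds for linear series on $\tilde C$ together with the norm constraint $\Nm_\pi L=\omega_C$, one shows that a family this large can exist only if $C$ carries a base‑point‑free pencil of degree at most $3$ or a net of degree $5$. A degree $2$ pencil means $C$ is hyperelliptic; a degree $3$ pencil means $C$ is trigonal; and a degree $5$ net, which is automatically very ample in the relevant genus, exhibits $C$ as a smooth plane quintic, after which one tracks the interaction of $\mu$ with the $g^2_5$ and with $K_C=\mathcal{O}_C(2)$ to conclude that $\mu$ is an even point of order two in the classical sense.

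I expect the main obstacle to be twofold. First, one must genuinely control $\operatorname{Sing}_{\mathrm{exc}}\Xi$: the general dimension bounds are comfortable for large $g$, but in the boundary genera $g=5$ and $g=6$ --- precisely where the conclusion is sharp, corresponding to trigonal genus five curves and to trigonal curves and plane quintics in genus six --- the low‑dimensional Prym theta divisors must be classified essentially by hand. Second, the final step requires the full Brill--Noether classification of special linear series on $C$ and $\tilde C$ rather than a naive dimension count, in particular to separate the plane‑quintic case from the others and to pin down the parity of $\mu$. These are the places where Shokurov's argument does its real work.
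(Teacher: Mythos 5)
A preliminary remark: the paper does not prove this proposition; it is quoted from Shokurov, so your proposal can only be measured against his argument rather than anything in the text. Your outline does follow the Mumford--Shokurov strategy: identify $\operatorname{Sing}\Theta=W^1_{g-2}(C')$, of dimension $(g-1)-4=g-5$ on a Jacobian of dimension $g-1$, and compare with Mumford's decomposition of $\operatorname{Sing}\Xi$ into the stable part $\{h^0(L)\ge 4\}$ and the exceptional part $\{L=\pi^*M(B)\}$. The first paragraph is essentially sound (modulo the slip that for $g=4$ the relevant locus is $W^1_2(C')$ with $C'$ of genus $3$, not $W^1_1$; it is still generically empty, so your point stands).

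The genuine gap is the bielliptic case, which is exactly the consequence this paper needs (Proposition~\ref{notbielliptic} is invoked in Proposition~\ref{prop:contract} solely to conclude that $C$ is not bielliptic). Your second paragraph asserts that $\operatorname{Sing}_{\mathrm{exc}}\Xi$ has dimension well below $g-5$, and your third concludes that a $(g-5)$-dimensional family of singularities forces a $g^1_2$, a $g^1_3$ or a $g^2_5$ on $C$. Both claims fail for bielliptic $C$: a double cover $C\to E$ of an elliptic curve carries a one-dimensional family of $g^1_4$'s (this is Theorem~\ref{thm:g14} in the paper), and each such $M$ yields exceptional singularities $\pi^*M(B)$ with $\Nm(\pi^*M(B))\cong\omega_C$, sweeping out a family of dimension $g-5$ in $\operatorname{Sing}\Xi$. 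This is precisely why $\mathcal{BE}$ appears as a component of $\AM_5$ alongside $\J_5$ in the description quoted in the introduction: the dimension of the singular locus cannot separate bielliptic Pryms from Jacobians. Shokurov excludes the bielliptic (and quasi-trigonal/nodal) cases by finer invariants --- the global structure of $\operatorname{Sing}\Xi$ and the quadric tangent cones at its points, compared with the rank-$\le 4$ tangent cones that Riemann--Kempf forces at points of $W^1_{g-2}(C')$ --- and this is where the real work of the theorem lies. As written, your argument proves at best the weaker statement with ``bielliptic'' added to the list of allowed conclusions, which would not suffice for the application in this paper.
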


In particular, this theorem implies that $C$ is not bielliptic, that is, it is not a branched cover of an elliptic curve, and this will be the main use we will have for it later.

\subsection{Fibers of the Prym map}

Any study of the fibers of the Prym map begins with the tetragonal construction of Donagi, announced in \cite{MR598683} and more fully developed in \cite{MR1188194}.  Let $\pi\colon\tilde{C}\to C$ be an unramified double cover and $f\colon C\to \PP^1$ be a degree four map.  Then we define \[f_*\tilde{C}=\{D\in\Sym^4\tilde{C}|\Nm_\pi(D)=f^{-1}(k)\mbox{ for some }k\in \PP^1.\] The involution $\iota$ on $\tilde{C}$ extends to $f_*\tilde{C}$, which also splits into two components, giving us two new towers $\tilde{C}_0\stackrel{\pi_0}{\to}C_0\stackrel{f_0}{\to}\PP^1$ and $\tilde{C}_1\stackrel{\pi_1}{\to}C_1\stackrel{f_1}{\to}\PP^1$ where $(C_i,\tilde{C}_i)\in\RMbar_g$ and $f_i$ is of degree four.  The key theorem is

\begin{theorem}[{\cite[Proposition 1.1]{MR598683}}]
{\ \\}
The tetragonal construction commutes with the Prym map: \[\P_g(C,\tilde{C})\cong \P_g(C_0,\tilde{C}_0)\cong \P_g(C_1,\tilde{C}_1).\]
\end{theorem}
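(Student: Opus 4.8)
The plan is to recover all three towers from a single Galois cover of $\PP^1$ and to identify the three Prym varieties with one and the same isotypic piece of a single Jacobian, the comparison being forced by the symmetry of the relevant Galois group; I would first treat a general tetragonal $f$ and obtain the rest by specialization. \emph{Setting up the Galois closure:} the composite $\tilde{C}\to C\xrightarrow{f}\PP^1$ is a degree $8$ cover; let $\hat{C}\to\PP^1$ be its Galois closure, with group $G$. Since $\tilde{C}\to C$ is unramified and $f$ has simple branching, each local monodromy of $\tilde{C}\to\PP^1$ is a product of two commuting transpositions carrying an even ``sign twist'', so $G$ lies in the index-two subgroup of the wreath product $\ZZ/2\wr S_4$ cut out by $\sum_i\delta_i\equiv 0$; for general $f$ it equals that subgroup, the Weyl group $W(D_4)$ of order $192$. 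With this identification the fibre of $\tilde{C}$ is $\{(i,\epsilon):i\in\{1,\dots,4\},\epsilon\in\ZZ/2\}$, the fibre of $C$ is $\{1,\dots,4\}$, and the fibre of $f_*\tilde{C}$ is the set of sign vectors $\epsilon\in(\ZZ/2)^4$, on each of which the central involution $\iota=((1,\dots,1),\mathrm{id})\in G$ acts by flipping all signs; as $G$ preserves the parity of $\epsilon$, the sign vectors split as the even ones (giving $\tilde{C}_0$) and the odd ones (giving $\tilde{C}_1$). The point stabilizers $H,H_0,H_1\le G$ of these three $G$-sets are three index-$8$ subgroups, each abstractly $S_4$ but embedded in the three triality-related ways, and $\iota$ together with $H$ (resp. $H_i$) stabilizes a point of $C$ (resp. $C_i$), so $C=\hat C/\langle H,\iota\rangle$ and $C_i=\hat C/\langle H_i,\iota\rangle$; a Riemann--Hurwitz count confirms $g(C_i)=g$.

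\emph{Comparing Pryms as representations, via correspondences:} for $L\le G$, $\J(\hat{C}/L)\otimes\QQ$ is the $L$-invariant part of $\J(\hat{C})\otimes\QQ$, so $\P(\tilde{C}/C)=\ker^0(\Nm\colon\J\tilde{C}\to\J C)$ corresponds to the $G$-subrepresentation $\QQ[G/H]\ominus\QQ[G/\langle H,\iota\rangle]$, which is exactly the $(-1)$-eigenspace of $\iota$ on $\QQ[G/H]$; likewise $\P(\tilde{C}_i/C_i)$ is the $(-1)$-eigenspace of $\iota$ on $\QQ[G/H_i]$. A character computation --- precisely the triality of $W(D_4)$ permuting its three classes of $S_4$'s --- shows these three $G$-modules are isomorphic, hence the three Pryms are isogenous. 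To realize and refine this I would use the $(4,4)$-correspondences $\Gamma_i\subset\tilde{C}\times\tilde{C}_i$ given by ``$q$ occurs in the divisor $D$'': over a general point of $\PP^1$, the point $(i,\epsilon)$ is sent to the four sign vectors of the correct parity that agree with it in slot $i$. These induce isogenies $\Gamma_{i*}\colon\P(\tilde{C}/C)\to\P(\tilde{C}_i/C_i)$, and by the symmetry of the construction among the three towers the transposes $\leftexp{t}{\Gamma_i}$ behave the same way.

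\emph{The main obstacle --- principal polarizations:} the real work is to promote $\Gamma_{i*}$ to an \emph{isomorphism} of principally polarized abelian varieties, not merely an isogeny. For this I would compute the self-correspondence $\leftexp{t}{\Gamma_i}\circ\Gamma_i$ on $\tilde{C}$: over a general fibre it is an explicit $\ZZ$-combination of the diagonal, of the graph of $\iota$, and of the ``whole-fibre'' correspondence, the coefficients being read off from the incidence pattern of the $16$ sign vectors. Restricted to $\P(\tilde{C}/C)$ the last two terms act as the scalars $-1$ and $0$, so $\leftexp{t}{\Gamma_i}\circ\Gamma_i$ acts there as multiplication by a power of $2$; tracking the kernel then shows $\Gamma_{i*}$ factors as multiplication by $2$ followed by an isomorphism $\P(\tilde{C}/C)\xrightarrow{\sim}\P(\tilde{C}_i/C_i)$ carrying the restricted polarization $2\Xi$ to $2\Xi_i$, hence $\Xi\cong\Xi_i$ as principal polarizations. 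Finally, since the whole construction works in one-parameter families of tetragonal covers and, by Theorem~\ref{PrymisProper}, the Prym map is proper on allowable covers, the isomorphism for general smooth $(C,\tilde{C},f)$ propagates over all of $\RMbar_g$ on which the tetragonal construction is defined, including the boundary strata listed above, where one also verifies the asserted stable models of $\tilde{C}_i$ and $C_i$ directly.
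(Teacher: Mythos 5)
This is a background result the paper quotes from Donagi's announcement \cite{MR598683} without proof, so there is no in-paper argument to compare against; I will judge your reconstruction against the known proof. Your framework is the right one and matches Donagi's: the Galois closure with group $W(D_4)$ of order $192$ inside $\ZZ/2\wr S_4$, the three index-$8$ stabilizers permuted by triality, the identification of the three Pryms with the $\iota=(-1)$-eigenspaces of the permutation modules $\QQ[G/H_i]$, and the $(4,4)$-correspondences $\Gamma_i$. (One small precision: the full modules $\QQ[G/H]$ and $\QQ[G/H_i]$ are \emph{not} isomorphic as $G$-representations --- e.g.\ a pure sign change $((1,1,0,0),\mathrm{id})$ has $4$ fixed points on the fiber of $\tilde C$ and none on the sign vectors --- so the character identity you invoke must be, and is, an identity of the $(-1)$-eigenspaces only; you do phrase it that way, but it deserves the explicit check.) Your computation of $\leftexp{t}{\Gamma}\circ\Gamma$ is also correct: it equals $2\Delta-2\Gamma_\iota+2F$, hence acts as multiplication by $4$ on $\P(C,\tilde C)$.

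The genuine gap is the final step, which is in fact the entire content of the theorem. From $\leftexp{t}{\Gamma}_*\Gamma_*=4$ on the Prym you conclude that ``tracking the kernel then shows $\Gamma_{i*}$ factors as multiplication by $2$ followed by an isomorphism.'' That does not follow. The relation $\phi^\dagger\phi=4$ (with $\dagger$ the Rosati adjoint for the induced principal polarizations) tells you only that $\ker\phi$ is a maximal isotropic subgroup of $P[4]$ of order $2^{2(g-1)}$ and that $\phi^*\Xi_i\equiv 4\Xi$; such a kernel need not be $P[2]$ --- it could contain points of order $4$ --- and $\phi=2\circ(\text{isomorphism})$ holds if and only if $\ker\phi=P[2]$ exactly. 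Ruling out the other isotropic subgroups is precisely where Donagi has to work: in his treatment this is done by identifying the kernel explicitly via the Galois-theoretic decomposition of $\J(\hat C)$ (Mumford's exact sequences for the various intermediate Pryms), and/or by degenerating the tetragonal cover so that the construction specializes to the bigonal and trigonal constructions (Pantazis, Recillas), where the polarization statement can be verified by hand and then propagated by properness --- the propagation step you do mention at the end, but without first establishing the statement somewhere. As written, your argument proves the three Pryms are isogenous with an isogeny of degree $4^{g-1}$ respecting the polarizations up to the factor $4$, which is strictly weaker than the theorem.
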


This is a generalization of a theorem of Recillas identifying trigonal Pryms:

\begin{theorem}[The Trigonal Construction {\cite[Example 2.15(1)]{MR1188194},\cite{MR0480505}}]
\label{Trigconst}
{\ \\}
If $C\to \PP^1$ is tetragonal and $\tilde{C}\to C$ is the reducible double cover, then the tetragonal construction gives a copy of $\tilde{C}\to C\to \PP^1$ and also $\PP^1\cup \tilde{T}\cup\PP^1\to \PP^1\cup T\to \PP^1$ where $T$ is a trigonal curve and $\P_g(T,\tilde{T})\cong \J_{g-1}(C)$.  Additionally, this operation is a bijection, so every trigonal Prym is a tetragonal Jacobian.
\end{theorem}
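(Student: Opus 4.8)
The plan is to apply the tetragonal construction directly to the reducible double cover $\pi\colon\tilde C=C_0\sqcup C_1\to C$ (two copies of $C$, with $\iota$ interchanging them) together with the given degree-four map $f\colon C\to\PP^1$, and then read off the Prym from the compatibility theorem above. The decisive observation is that $\Sym^4\tilde C$ is transparent here: over a general $k\in\PP^1$ with reduced fiber $f^{-1}(k)=p_1+\cdots+p_4$, a divisor $D\in\Sym^4\tilde C$ with $\Nm_\pi(D)=f^{-1}(k)$ is just a choice, for each $p_i$, of whether to place $p_i$ on $C_0$ or on $C_1$; hence $f_*\tilde C\to\PP^1$ has degree $2^4=16$, and its points over $k$ are indexed by subsets $S\subseteq\{1,\dots,4\}$. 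The monodromy of $f$ acts on these fibers through transpositions of the $p_i$ and so preserves $|S|$, while $\iota$ acts by $S\mapsto S^{c}$, which preserves $|S|\bmod 2$. Therefore $f_*\tilde C$ splits, compatibly with $\iota$, as a disjoint union of two degree-eight curves $X_{\mathrm{odd}}$ and $X_{\mathrm{even}}$ according to the parity of $|S|$.

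Next I would identify the two resulting towers. In $X_{\mathrm{odd}}$ the locus $|S|=1$ is a copy of $C$ (via $S=\{i\}\mapsto p_i$) and so is $|S|=3$, and $\iota$ interchanges them; thus $X_{\mathrm{odd}}\cong\tilde C$ with quotient $C$, so one of the two new towers is a copy of $\tilde C\to C\to\PP^1$, as claimed. In $X_{\mathrm{even}}$ the loci $|S|=0$ and $|S|=4$ are two sections of $f_*\tilde C\to\PP^1$, each a copy of $\PP^1$, interchanged by $\iota$; the locus $|S|=2$ is a degree-six curve $\tilde T$ on which $\iota$ acts freely (a two-element set never equals its complement), with quotient a degree-three curve $T=\tilde T/\iota$. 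A local analysis over the branch points of $f$ (where two of the $p_i$ collide) then shows how $\tilde T$ and the two sections attach, yielding the connected tower $\PP^1\cup\tilde T\cup\PP^1\to\PP^1\cup T\to\PP^1$, that $\tilde T\to T$ is an unramified double cover, and that $T$ is smooth and trigonal of arithmetic genus one more than $C$. Finally, for the reducible cover the Prym is computed by hand: $\J(\tilde C)=\J(C)\times\J(C)$, the norm map is addition $(a,b)\mapsto a+b$, so $\ker^{0}\Nm_\pi$ is the antidiagonal $\{(a,-a)\}\cong\J(C)$, on which the restriction of the product principal polarization is twice a principal polarization; hence $\P(C,\tilde C)\cong\J(C)$ as a principally polarized abelian variety. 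Since the two rational tails do not affect the Prym, $\P_g$ of the second tower is $\P(T,\tilde T)$, and the theorem that the tetragonal construction commutes with the Prym map gives $\P_g(T,\tilde T)\cong\P(C,\tilde C)\cong\J_{g-1}(C)$.

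It remains to see that the operation is a bijection, i.e. that every trigonal Prym arises this way. For this I would exhibit the inverse, namely Recillas's construction \cite{MR0480505}: starting from a trigonal $T\to\PP^1$ with unramified double cover $\tilde T\to T$, the curve whose fiber over $k\in\PP^1$ consists of the four ``even'' sections of $\tilde T|_{T_k}\to T_k$ (out of the $2^{3}=8$ sections of the fiber) is a tetragonal curve $C$ of genus $g-1$, and one checks that this assignment is a two-sided inverse of the construction above — both amount to distributing preimages under a double cover and sorting by parity — with $\P(T,\tilde T)\cong\J_{g-1}(C)$; see also \cite[Example 2.15(1)]{MR1188194}. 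The main obstacle I anticipate is the global geometry of the even component $X_{\mathrm{even}}$ in the second step: proving $\tilde T$ is irreducible (equivalently, that the monodromy of $f$ is transitive on the six two-element subsets, which must be argued carefully or extracted from the genus–degree bookkeeping), that $\tilde T\to T$ is genuinely étale, and pinning down exactly how the two $\PP^1$-sections attach to $\tilde T$ so that $\PP^1\cup\tilde T\cup\PP^1\to\PP^1\cup T$ is the correct stable object. The Prym computation for the reducible cover and the verification that Recillas's map is a mutual inverse are then essentially formal.
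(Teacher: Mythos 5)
The paper does not prove this statement --- it is quoted from Donagi \cite[Example 2.15(1)]{MR1188194} and Recillas \cite{MR0480505} --- so your proposal can only be measured against the standard argument, which it reconstructs correctly in outline: the $2^4$ sheets of $f_*\tilde C$ indexed by subsets $S$, the splitting by parity of $|S|$, the identification of the odd half with $\tilde C\to C$ and of the even half with $\PP^1$, $\tilde T$, $\PP^1$, the direct computation $\P(C,C\sqcup C)\cong \J(C)$ via the antidiagonal, and the transfer through the compatibility of the tetragonal construction with Prym. One point, however, should be resolved the other way from what you anticipate: the two $\PP^1$-sections do \emph{not} attach to $\tilde T$, and the second tower is \emph{not} connected. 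Since every permutation of $\{1,\dots,4\}$ preserves $|S|$ exactly (not just mod $2$), the function $|S|$ is locally constant on $f_*\tilde C$; a local computation at a simple branch point of $f$ (where $p_i=p_j=p$) shows that the sheets with $|S\cap\{i,j\}|=1$ merge pairwise into smooth simple ramification points $p^{(0)}+p^{(1)}+\cdots$ while the sheets with $S\supseteq\{i,j\}$ or $S\cap\{i,j\}=\emptyset$ pass through unramified, and no two sheets with different $|S|$ ever meet. So the even half is the disjoint union $\PP^1\sqcup\tilde T\sqcup\PP^1$, and the ``$\cup$'' in the statement denotes disjoint union. This is also forced by bookkeeping: the tetragonal construction preserves arithmetic genus, and $p_a(\PP^1\sqcup T)=g(T)-1=g(C)$, whereas gluing at even one point would raise the genus and (on the cover) introduce a toric part into the Prym, contradicting $\P(T,\tilde T)\cong\J(C)$. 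With that correction your remaining worries are minor: $\iota$ is free on the $|S|=2$ component even at its ramification points (it swaps the two ramification points $p^{(0)}+p^{(1)}+q^{(a)}+r^{(1-a)}$ over each branch point), so $\tilde T\to T$ is \'etale, and irreducibility of $\tilde T$ follows since the monodromy of a $g^1_4$ on a general curve is all of $S_4$, which acts transitively on two-element subsets. The Recillas inverse is, as you say, the same sorting-by-parity argument run backwards.
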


and also we recall a theorem of Mumford's identifying hyperelliptic Pryms:

\begin{proposition}[\cite{MR0292836}]
\label{Hyperprym}
{\ \\}
The Pryms of hyperelliptic curves are products of Jacobians.
\end{proposition}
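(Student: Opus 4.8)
The plan is to realize the double cover $\tilde C\to C$ explicitly as an intermediate cover of a $(\ZZ/2\ZZ)^2$-cover of $\PP^1$ and then to split its Jacobian. Write $C\colon y^2=f(x)$ with $f$ squarefree of even degree $2g+2$, and let $h\colon C\to\PP^1$ be the hyperelliptic map, branched at the root set $B$ of $f$. I would start from the classical identification of $\J(C)[2]$ with the group of even subsets of $B$ modulo complementation: a nonzero $\mu$ then corresponds to a partition $B=B_1\sqcup B_2$ into nonempty subsets of even sizes $2r$ and $2g+2-2r$, together with a factorization $f=f_1f_2$, $\deg f_i=|B_i|$. Put $C_1\colon s^2=f_1(x)$ and $C_2\colon t^2=f_2(x)$; these are hyperelliptic curves (rational if $r=1$ or $r=g$) of genera $r-1$ and $g-r$, so that $\dim\J(C_1)+\dim\J(C_2)=g-1=\dim\P(C,\tilde C)$.

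Next I would form $Z=\{s^2=f_1(x),\,t^2=f_2(x)\}$, a $(\ZZ/2\ZZ)^2$-cover of $\PP^1$; since $f_1$ and $f_2$ are coprime and nonconstant, they are independent in $\CC(x)^\times/(\CC(x)^\times)^2$, so $Z$ is connected, of genus $2g-1$. Its three involutions $\sigma\colon s\mapsto -s$, $\tau\colon t\mapsto -t$, and $\sigma\tau$ have quotients $C_2$, $C_1$, and (via $y=st$) $C$, giving degree-two maps $q_1\colon Z\to C_1$, $q_2\colon Z\to C_2$, and $\pi\colon Z\to C$. The map $\pi$ is unramified, because a fixed point of $\sigma\tau$ on $Z$ would force $f_1(x)=f_2(x)=0$, impossible as $B_1\cap B_2=\varnothing$ (the points over $\infty$ cause no trouble since $\deg f_i$ is even). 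Hence $\tilde C\cong Z$, and one checks that the two-torsion class cutting out $Z\to C$ is exactly the $\mu$ attached to $(B_1,B_2)$.

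Now I would study $\psi=q_1^*\oplus q_2^*\colon \J(C_1)\times\J(C_2)\to\J(Z)$. For $y\in C_1$, the fibre $q_1^{-1}(y)$ is a pair of points of $Z$ over one point of $\PP^1$, carried by $\pi$ to the two distinct points of $C$ over that point; thus $\Nm_\pi\big(q_1^*(y-y')\big)=h^*\mathcal{O}_{\PP^1}(1)-h^*\mathcal{O}_{\PP^1}(1)=0$, so $\Nm_\pi\circ q_1^*=0$, and likewise $\Nm_\pi\circ q_2^*=0$; hence $\psi$ lands in $\ker^0\Nm_\pi=\P(C,\tilde C)$. The same bookkeeping gives $\Nm_{q_1}\circ q_2^*=0$ and $\Nm_{q_2}\circ q_1^*=0$, while $\Nm_{q_i}\circ q_i^*=[2]$ by degree. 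Reading $\Nm_{q_i}$ as the dual of $q_i^*$ under the canonical polarizations, these identities yield $(q_i^*)^*\Theta_Z=2\Theta_{C_i}$ and vanishing cross terms, so the pullback of $\Theta_Z$ along $q_1^*\oplus q_2^*$ is $2(\Theta_{C_1}\boxplus\Theta_{C_2})$. Since the polarization of $\P(C,\tilde C)$ is $\tfrac12\Theta_Z|_{\P}$, we get that $\psi$ carries the product principal polarization $\Theta_{C_1}\boxplus\Theta_{C_2}$ to the principal polarization of $\P(C,\tilde C)$; in particular $\psi$ is an isogeny pulling a principal polarization back to a principal polarization, so $\deg\psi=1$ and $\psi$ is an isomorphism. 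Therefore $\P(C,\tilde C)\cong\J(C_1)\times\J(C_2)$, a product of Jacobians (with $\J(\PP^1)$ a point when $r\in\{1,g\}$).

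The step I expect to be the main obstacle is the bookkeeping concentrated in the first two paragraphs: verifying that \emph{every} nonzero point of $\J(C)[2]$ arises from such a partition, that the explicit cover $Z$ is the one attached to $\mu$ rather than to a twist of it, and that the degenerate endpoints $r\in\{1,g\}$ behave as claimed. Once the cover $Z$ is correctly pinned down, the norm-map identities and the polarization comparison are routine.
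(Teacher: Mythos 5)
Your argument is correct and is essentially Mumford's own proof of this statement from the cited reference (the application to hyperelliptic curves in \emph{Prym varieties I}); the paper itself offers no proof, only the citation. The bidouble cover $Z$, the norm identities $\Nm_{q_i}\circ q_j^{*}=0$ for $i\neq j$ and $\Nm_{q_i}\circ q_i^{*}=[2]$, and the polarization count forcing $\deg\psi=1$ are exactly the standard route, and the bookkeeping you flag closes without difficulty since the $2^{2g}-1$ partitions of the branch set into two nonempty even parts biject with the nonzero classes of $\J(C)[2]$.
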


Now we will discuss the fibers of the Prym maps $\P_6$ and $\P_5$, as well as the behavior over certain special loci.  For a general genus six curve, there are exactly five $g^1_4$'s, and from this and the fact that any two points in the fiber of $\P_6$ are related by at most two applications of the tetragonal construction, we get

\begin{theorem}[{\cite[Theorem 2.1]{MR594627}\cite[Theorem 2.1]{MR598683}}]
\label{DS}
{\ \\}
The map $\RM_6\to \A_5$ is generically finite of degree 27 with Galois group $\WE_6$, the Weil group of the root system of type $E_6$, giving the general fiber the structure of the 27 lines on a cubic surface.
\end{theorem}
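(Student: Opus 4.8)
The plan is to assemble three ingredients — a dimension count, the Brill--Noether enumeration of $g^1_4$'s, and the tetragonal construction — into a combinatorial model of the general fiber of $\P_6$. First I would observe that $\dim\RM_6=\dim\M_6=3\cdot 6-3=15=\dim\A_5$, and that by Theorem~\ref{PrymisProper} the allowable Prym map surjects onto $\A_5$; since a general $A\in\A_5$ lies off the (lower-dimensional) images of the boundary strata, its preimage is finite, nonempty, and contained in $\RM_6$, so $\P_6$ is dominant and generically finite of some degree $d$, with a Galois (monodromy) group $G\subseteq S_d$. Now fix a general $A\in\A_5$ and a point $(C,\tilde C)\in\P_6^{-1}(A)$; dominance forces $C$ to be a general curve of genus $6$, for which $\rho(6,1,4)=6-2\cdot 3=0$, and the Castelnuovo count $6!/(3!\cdot 4!)=5$ shows that $C$ carries exactly five $g^1_4$'s, each (for general $C$) base point free with simple branching and hence a genuine tetragonal pencil.

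Next I would run each of the five pencils through the tetragonal construction. Because it commutes with the Prym map (\cite[Proposition 1.1]{MR598683}), each pencil produces two further points of the same fiber $\P_6^{-1}(A)$; moreover the construction is symmetric on the resulting triple $\{(C,\tilde C),(C_0,\tilde C_0),(C_1,\tilde C_1)\}$, so its three members are pairwise tetragonally related. Granting that the curves thus produced are again general of genus $6$ — so that the ``five pencils, ten neighbours'' picture is uniform over the whole fiber — the set $\P_6^{-1}(A)$ acquires the structure of a $10$-regular graph $\Gamma$ whose neighbourhood of any vertex is partitioned into five pairs, each pair spanning a triangle with that vertex.

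I would then invoke the key input that any two points of $\P_6^{-1}(A)$ are joined by at most two applications of the tetragonal construction, i.e.\ $\Gamma$ is connected of diameter $\le 2$. A local analysis of the construction should show that two adjacent vertices of $\Gamma$ have exactly one common neighbour (the apex of their triangle) while two non-adjacent vertices have exactly five; thus $\Gamma$ is strongly regular with parameters $(N,10,1,5)$, and the standard counting identity $10\cdot(10-1-1)=(N-11)\cdot 5$ forces $N=27$. But the strongly regular graph with parameters $(27,10,1,5)$ is unique — it is the collinearity graph of the twenty-seven lines on a smooth cubic surface — and its automorphism group is $\WE_6$. This gives $d=27$ and canonically identifies the general fiber with the configuration of $27$ lines. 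Finally, the $2$-torsion monodromy on $\M_6$ is all of $\Sp(12,\FF_2)$, so $\RM_6$ is irreducible and $G$ is transitive; since $G$ preserves the incidence structure, $G\subseteq\WE_6$, and a monodromy computation around suitable divisors of $\A_5$ — using the controlled degenerations of the fiber recorded in Proposition~\ref{BoundaryPrym} — realizes enough elements (e.g.\ reflections arising around appropriate ramification) to generate all of $\WE_6$, giving $G=\WE_6$.

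The step I expect to be the main obstacle is the geometric input feeding the combinatorics: verifying that the tetragonal construction preserves genericity — so that $\Gamma$ really is $10$-regular and the tetragonal orbit of a single fiber point exhausts $\P_6^{-1}(A)$ — and establishing the local structure of $\Gamma$, equivalently its strong regularity (the common-neighbour counts above). This is exactly the content that must be imported from the analyses of the tetragonal construction by Donagi and Donagi--Smith; once it is available, what remains is the classical uniqueness of the $(27,10,1,5)$ configuration together with the monodromy argument pinning $G$ down to $\WE_6$.
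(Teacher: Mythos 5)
First, note that Theorem~\ref{DS} is quoted in the paper from Donagi--Smith and Donagi; the paper contains no proof of it, so there is no internal argument to compare yours against, only the cited sources. Your outline is a reasonable reconstruction of the shape of the literature proof (tetragonal construction plus the combinatorics of the $27$ lines), and the preliminary steps --- properness and surjectivity of the allowable Prym map, the dimension count, the Brill--Noether count of five $g^1_4$'s on a general genus-$6$ curve, and the observation that every curve appearing in the fiber over a general $A$ is itself general --- are all sound.

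The genuine gap is a circularity in how you obtain the number $27$. You feed into the strongly-regular-graph identity three facts: that $\Gamma$ is connected of diameter $\le 2$, that $\lambda=1$, and that $\mu=5$. But Proposition~\ref{Tetspan6} (any two points of a fiber are related by at most two tetragonal constructions) is, in Donagi's treatment, a \emph{consequence} of the identification of the fiber with the $27$ lines, not an independently available input; and the common-neighbour counts $\lambda=1$, $\mu=5$ are precisely the incidence structure of the $27$ lines restated, with no indication of how to establish them directly from the geometry of the tetragonal construction (why can two tetragonally related covers not share a second tetragonal triple? why do two non-related covers admit exactly five common neighbours rather than some other number?). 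In the cited sources the degree is instead computed by degeneration: one specializes $A$ to a general Jacobian $\J(C)$ with $C\in\M_5$, enumerates the fiber explicitly as one plane quintic with an odd semiperiod, ten trigonal covers and sixteen Wirtinger covers --- this is exactly Proposition~\ref{Jacfiber} of the paper, giving $1+10+16=27$ --- and verifies that the local degree at each of these points is one; the tetragonal construction then endows the general fiber with the $27$-lines incidence and forces the monodromy into $\WE_6$, with transitivity and explicit degenerations giving equality. If you want to keep the purely combinatorial route, you must supply independent proofs of the diameter bound and of $\lambda=1$, $\mu=5$; otherwise the degeneration to the Jacobian fiber is the step that actually pins down the degree. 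Your final step ($G=\WE_6$) is likewise only asserted; it needs either the reflections produced by the quintic/trigonal degenerations or the ramification analysis underlying Proposition~\ref{Prymbranchlocus}.
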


We can see this structure clearly over some special loci in $\A_5$.  However, first we need to perform some blowups.  Let $\RQ^0$ and $\RQ^1$ be the loci of plane quintic curves with an odd or even point of order two, respectively. (If $\mu$ is a point of order two on $\J(Q)$, its parity is the parity of the dimension of the space of global sections of $\mu\otimes g^2_5$.) Also let $\RM_6^{Trig}$ be the locus of genus six curves with a $g^1_3$, let $\C$ be the locus of intermediate Jacobians of cubic threefolds in $\A_5$.  Finally, set $\widetilde{\RM}_6$ to be the blowup of $\RM_6$ along $\RQ^0\cup\RQ^1\cup\RM_6^{Trig}$ and $\widetilde{\A}_5$ the blowup of $\A_5$ along $\J_5\cup \C$.  Then the Prym map $\tilde{\P}_6:\widetilde{\RM}_6\to\widetilde{\A}_5$.  This blown up Prym map is easier to analyze and gives more explicitly the structure of the tetragonal construction and the fibers.

\begin{proposition}[{\cite[Remark 4.5.1]{MR1188194}}]
\label{Tetspan6}
{\ \\}
Any two points in a fiber of $\tilde{\P}_6$ are related by at most two tetragonal constructions.
\end{proposition}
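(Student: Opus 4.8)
The plan is to deduce the statement from the description of the general fiber of $\P_6$ in Theorem~\ref{DS} and then extend the conclusion across the exceptional divisors of the two blowups defining $\tilde{\P}_6$. For a general genus six curve the five $g^1_4$'s correspond, under the identification of the fiber of $\P_6$ with the $27$ lines on a cubic surface $S$, to the five singular fibers of the conic bundle $\Bl_\ell S\to\PP^1$ obtained by projecting from a line $\ell$; each such singular fiber is a pair of lines meeting $\ell$, and applying the tetragonal construction to the point $\ell$ with the corresponding $g^1_4$ produces exactly that pair $\tilde{C}_0,\tilde{C}_1$. Hence one tetragonal construction starting from $\ell$ reaches precisely the $10$ lines incident to $\ell$, i.e.\ the neighbours of $\ell$ in the graph on the $27$ lines in which two lines are adjacent when they meet. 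This graph (the complement of the Schl\"afli graph) is strongly regular with parameters $(27,10,1,5)$, and since its last parameter is $5>0$ any two non-adjacent lines have a common neighbour, so the graph has diameter $2$; therefore any two points $\ell,\ell'$ of a general fiber are joined by a chain $\ell\to m\to\ell'$ in which each step is one of the two outputs of a tetragonal construction. This gives the assertion over the open locus where $\tilde{\P}_6$ agrees with $\P_6$.

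It then remains to treat the fibers of $\tilde{\P}_6$ over the exceptional divisors of $\widetilde{\A}_5\to\A_5$, that is, over $\J_5$ and $\C$, which are precisely the fibers the blowups were designed to separate. For these I would use that $\tilde{\P}_6$ remains generically finite of degree $27$ and that the tetragonal construction, being defined on all of $\widetilde{\RM}_6$ and varying continuously, degenerates such a fiber from the general one without losing any incidences; provided the $27$ points of a special fiber remain distinct — which is exactly what the blowups achieve, generically on the exceptional divisors — its incidence graph contains a copy of the diameter-$2$ graph of the general fiber, and the same chain argument applies. (As a consistency check, $\tilde{\P}_6$ being finite of degree $27$ with transitive monodromy forces each special fiber to carry a $10$-regular vertex-transitive graph, which cannot be disconnected, since its components would be $10$-regular on a proper divisor of $27$, hence on fewer than $11$ vertices.) For the concrete verification I would analyze the two loci directly: over $\J_5$ via the Wirtinger covers, the covers in $\RM_6^{Trig}$, and the trigonal construction (Theorem~\ref{Trigconst}), using Shokurov's theorem (Proposition~\ref{notbielliptic}) to exclude the bielliptic degenerations; over $\C$ via the explicit double covers whose Prym is the intermediate Jacobian of a cubic threefold, where the relevant cubic surface degenerates in a controlled way so that its $27$ lines and the conic-bundle description of incidence survive the blowup.

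The main obstacle is this last point: pinning down the fibers over $\J_5$ and $\C$ requires a precise understanding of degenerations of five-dimensional principally polarized abelian varieties and of how the tetragonal construction extends over the exceptional divisors of $\widetilde{\RM}_6$ and $\widetilde{\A}_5$, which is exactly where Izadi's analysis of such degenerations is needed; once that is in place, the graph-theoretic part above is routine, the structure of the $27$ lines with the tetragonal construction realising adjacency is preserved by the blowups, and the proposition follows over all of $\widetilde{\A}_5$.
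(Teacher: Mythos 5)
This proposition is not proved in the paper: it is quoted directly from Donagi's \emph{The fibres of the Prym map} (Remark 4.5.1 of \cite{MR1188194}), so there is no internal argument to compare yours against; I can only assess your proposal on its own terms. The first half is correct and is the standard picture: projecting the cubic surface $S$ from a line $\ell$ gives a conic bundle $\Bl_\ell S\to\PP^1$ whose five singular fibres are the five pairs of lines meeting $\ell$, these correspond to the five $g^1_4$'s, and each tetragonal construction outputs exactly one such pair, so one step from $\ell$ reaches precisely the ten incident lines. The intersection graph of the $27$ lines is indeed strongly regular with parameters $(27,10,1,5)$, and positivity of the last parameter gives diameter $2$. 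This settles the statement for the general fibre.

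The gap is in the second half, and it sits exactly where you say it does. The proposition quantifies over \emph{all} fibres of $\tilde{\P}_6$, and your specialization argument does not close on its own. First, it presupposes that the tetragonal incidence relation is closed in $\widetilde{\RM}_6\times_{\widetilde{\A}_5}\widetilde{\RM}_6$, i.e.\ that limits of tetragonally related pairs remain tetragonally related when the $g^1_4$'s or the double covers degenerate (to Wirtinger, unallowable or Beauville covers, or to curves with infinitely many $g^1_4$'s); this is precisely the content that has to be verified, not a formal consequence of continuity. Second, your standing hypothesis that the $27$ points of a special fibre remain distinct fails on a divisor: over the branch locus of $\P_6$ (intermediate Jacobians of quartic double solids, Proposition \ref{Prymbranchlocus}) the fibre is the line configuration of a singular cubic surface counted with multiplicities, and this locus is \emph{not} among the centres of the blowups defining $\tilde{\P}_6$. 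Your parenthetical consistency check only yields connectivity of a $10$-regular graph on $27$ vertices, which is strictly weaker than diameter $2$, so it cannot substitute for the missing analysis. What Donagi actually supplies in \cite{MR1188194} is an explicit case-by-case description of every special fibre (the analogues of Propositions \ref{Cubicfiber} and \ref{Jacfiber}, together with the trigonal, hyperelliptic, bielliptic, plane-quintic and boundary cases), from which the two-step statement is read off fibre by fibre; your proposal correctly identifies this as the missing ingredient but does not supply it.
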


\begin{proposition}[{\cite[4.3.4]{MR1188194}}]
\label{Cubicfiber}
{\ \\}
The fiber of $\P_6$ over $X\in\C$ is the Fano surface of lines in $X$.  In $\widetilde{\A}_5$, the inverse image of $X\in\C$ is pairs $(X,H)$ where $H$ is a hyperplane in $\PP^4$.  Then, $\tilde{\P}_6^{-1}(X,H)=\{\ell|\ell\subset X\cap H\}$.
\end{proposition}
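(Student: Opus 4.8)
The plan is to build the fibre out of the classical conic bundle construction for cubic threefolds, identify it with the Fano surface of lines, and then read off the exceptional divisors of the two blowups from the relevant variations of Hodge structure. Write $X\subset\PP^4$ for a smooth cubic threefold and, abusing notation, also $X\in\C\subset\A_5$ for its intermediate Jacobian $JX$. Given a line $\ell\subset X$, projection away from $\ell$ realizes the blowup $\Bl_\ell X$ as a conic bundle over $\PP^2$; its discriminant is a plane quintic $C_\ell$, and the pair of lines in each singular fibre determines an unramified double cover $\tilde C_\ell\to C_\ell$, which turns out to be even. For general $\ell$, $C_\ell$ is smooth and $(C_\ell,\tilde C_\ell)\in\RQ^1\subset\RM_6$; the locus of lines where $C_\ell$ degenerates (the lines of the second type) is a curve in $F(X)$ and is absorbed by passing to $(\RMbar_6)^{allowable}$, which is proper over $\A_5$ by Theorem~\ref{PrymisProper}. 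By Mumford's analysis of conic bundles \cite{MR0379510} one has $\P_6(C_\ell,\tilde C_\ell)\cong JX$ canonically, so $\ell\mapsto(C_\ell,\tilde C_\ell)$ defines a morphism $\phi\colon F(X)\to\P_6^{-1}(X)$.

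First I would show $\phi$ is an isomorphism onto $\P_6^{-1}(X)$. Injectivity is formal: from $(C_\ell,\tilde C_\ell)$ one reconstructs the conic bundle over $\PP^2$ out of its discriminant datum, hence recovers $(X,\ell)$ by blowing $\ell$ down, so $\phi$ is injective, and a bijective morphism between smooth surfaces is an isomorphism once surjectivity is known. Surjectivity is the heart of the matter. The theta divisor of $JX$ has a unique singular point, a triple point whose projectivized tangent cone is $X\subset\PP^4=\PP(T_0 JX)$, so $X$ is recovered from $JX$; using the relation between this tangent cone and the Prym-canonical curve of any $(C,\mu)\in\P_6^{-1}(X)$ one shows that $C$ is forced to be an even plane quintic, i.e.\ the whole fibre lies in $\RQ^1$. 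Finally, the conic bundle curves $(C_\ell,\tilde C_\ell)$ exhaust this: the $g^1_4$'s on a smooth plane quintic form the one-dimensional family $\{|H-p|:p\in C\}$, the corresponding tetragonal moves realize the incidence correspondence of lines on $X$, so $\phi(F(X))$ is stable under the tetragonal construction; since any two points of a fibre of $\tilde\P_6$ are joined by at most two tetragonal constructions (Proposition~\ref{Tetspan6}) and $\phi(F(X))$ is nonempty (cubic threefolds contain lines), we get $\phi(F(X))=\P_6^{-1}(X)$, which is therefore the irreducible surface $F(X)$.

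Next I would analyse the blowups. Over $X\in\C$ the inverse image in $\widetilde{\A}_5$ is $\PP(N_{\C/\A_5,X})$, a $\PP^4$. By Griffiths' residue calculus $T_X\A_5\cong\Sym^2 H^{1,2}(X)$, the tangent space to $\C$ is the image of $H^1(X,T_X)$ under the differential of the period map, and the quotient normal space is canonically the $5$-dimensional space of linear forms on $\PP^4$; thus $\PP(N_{\C/\A_5,X})$ is the dual projective space of hyperplanes $H\subset\PP^4$, which is the asserted description of the inverse image of $X$. On the source side, $\widetilde{\RM}_6\to\RM_6$ blows up the plane-quintic locus $\RQ^0\cup\RQ^1$ (and $\RM_6^{Trig}$), so the inverse image of $(C_\ell,\tilde C_\ell)\in\RQ^1$ is $\PP(N_{\RQ^1/\RM_6})$, a $\PP^2$; differentiating the conic bundle construction identifies this $\PP^2$ with the $\PP^2$ of hyperplanes containing $\ell$, compatibly with the identification above, so that $\tilde\P_6(\ell,H)=(X,H)$ for a hyperplane $H\ni\ell$. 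It follows that $\tilde\P_6^{-1}(X,H)=\{\ell\subset X\mid\ell\subset H\}$, the Fano scheme of the cubic surface $X\cap H$, which for general $H$ is $27$ points, consistent with Theorem~\ref{DS}.

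The hard part is this last compatibility together with the surjectivity in the second step. For surjectivity one must turn the tangent cone observation into an actual constraint forcing every genus-six Prym over $JX$ to be a plane quintic coming from a line on $X$; this is the most delicate point and rests on Donagi's analysis of Prym-embedded curves \cite{MR1188194}. For the compatibility one must check, by an infinitesimal comparison of the variation of Hodge structure of $X$ along $\C$ with the deformation of the discriminant quintic and its double cover as $\ell$ varies in $F(X)$, that the identification of $N_{\C/\A_5,X}$ with the hyperplane system matches, term by term under $\tilde\P_6$, the identification of $N_{\RQ^1/\RM_6}$ at each $(C_\ell,\tilde C_\ell)$ with the hyperplanes through $\ell$; this is where the precise normalizations in the definitions of $\widetilde{\A}_5$ and $\widetilde{\RM}_6$ enter, and where one also verifies that $\widetilde{\RM}_6$ resolves the behaviour of $\phi$ along the lines of the second type.
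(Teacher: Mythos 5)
The paper offers no proof of this proposition: it is quoted verbatim from Donagi's \emph{Fibers of the Prym map} [4.3.4], with the remark that it rests on the conic-bundle correspondence of \cite{MR0472843}. Your outline does follow the strategy of those sources: project from a line $\ell\subset X$ to get the discriminant quintic with its even double cover, identify its Prym with the intermediate Jacobian, and read the two exceptional divisors off the normal cones (the conormal space to $\C$ at $JX$ is spanned by the partials of the cubic form, giving the $\PP^4$ of hyperplanes; the net of hyperplanes through $\ell$ gives the $\PP^2$ over each point of the Fano surface; the dimension count $2+2=4\to\PP^4$, generically $27{:}1$, is consistent). So the architecture is right.

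However, the two steps you yourself flag as ``the hard part'' are the actual content of the proposition, and as written they are not established. (i) Surjectivity: the claim that the tangent cone to the triple point of $\Theta_{JX}$ ``forces'' every $(C,\mu)\in\P_6^{-1}(JX)$ to be an even plane quintic is precisely what must be proved; in Beauville and Donagi this comes from Mumford's classification of Pryms with exceptional theta-divisor singularities, not from a formal reading of the tangent cone. Your fallback via Proposition~\ref{Tetspan6} is worse than incomplete: the diameter-two statement for fibers of $\tilde\P_6$ rests on the $\WE_6$ monodromy and fiber analysis of Donagi--Smith and Donagi, of which the cubic-threefold fiber description is an ingredient, so invoking it here risks circularity. (ii) The compatibility of the two normal-cone identifications under $d\tilde\P_6$ is deferred to ``an infinitesimal comparison of variations of Hodge structure,'' but that codifferential computation \emph{is} Donagi's proof of the final assertion $\tilde\P_6^{-1}(X,H)=\{\ell\subset X\cap H\}$; without it you have only a plausible matching of two $\PP^4$'s. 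A smaller gap: a bijective morphism from the Fano surface onto $\P_6^{-1}(JX)$ is an isomorphism only once the fiber is known to be reduced and normal, which also requires argument. In short: correct roadmap, but the decisive steps are named rather than carried out.
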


The above proposition exhibits the structure of the 27 lines very explicitly, and it follows from \cite[Theorem 2.1]{MR0472843} which uses the correspondence between conic bundle structures and lines on a cubic threefold to identify the intermediate Jacobians with the Pryms of discriminant curves.

Suppressing normal data, we can also describe the fiber over the Jacobian of a curve:

\begin{proposition}[{\cite[4.3.7]{MR1188194}}]
\label{Jacfiber}
{\ \\}
Let $C\in \M_5$.  Then $\tilde{\P}_6^{-1}(\J(C))$ contains one plane quintic with an odd point of order two, ten trigonal curves with double covers and sixteen Wirtinger double covers.
\end{proposition}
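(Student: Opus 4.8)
The plan is to exploit the identification, from Theorem~\ref{DS}, of the general fiber of $\P_6$ with the configuration of $27$ lines on a cubic surface carrying an action of $\WE_6=W(E_6)$, and to follow these $27$ objects as the image point in $\A_5$ specializes to $\J(C)$; the blow-ups defining $\widetilde{\RM}_6$ and $\widetilde{\A}_5$ are precisely what keep $\tilde{\P}_6$ finite of degree $27$ over the exceptional divisor, so that $\tilde{\P}_6^{-1}(x)$ still has $27$ elements for $x$ a general point of the $\PP^2$ lying over $\J(C)$. First I would locate a distinguished element of this fiber, a plane quintic $(Q,\mu)$ with $\mu\in\RQ^0$ and $\P_6(Q,\tilde{Q})\cong\J(C)$, and show it is unique. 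Since $\dim\RQ^0=12=\dim\J_5$, the restriction $\RQ^0\to\J_5$ of the Prym map is generically finite; I would argue it is birational, so that a general $\J(C)$ has a single plane quintic with odd point of order two in its preimage, a property which survives the blow-up of $\RQ^0\subset\RM_6$ and of $\J_5\subset\A_5$ as a single point of $\tilde{\P}_6^{-1}(x)$. Birationality here is the main external input; it is essentially the classical Recillas--Masiewicki description of Pryms of plane quintics with an odd theta characteristic, but one can also reach it by a dimension count: by Shokurov's Proposition~\ref{notbielliptic} the only \emph{smooth} genus-six double covers with Jacobian Prym are the trigonal ones and plane quintics with a suitable point of order two, the trigonal ones sweep out a positive-dimensional family over each $\J(C)$ (parametrized by $W^1_4(C)$, which is a curve), so in the limit the plane quintics must supply the isolated smooth point, and there is exactly one of them.

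Next I would use the group theory. The stabilizer in $W(E_6)$ of a line $\ell$ on a cubic surface is $W(D_5)$, and relative to $\ell$ the $27$ lines split as $\ell$ itself, the $10$ lines meeting $\ell$, and the $16$ lines skew to $\ell$ --- this is exactly the branching $\mathbf{27}=\mathbf{1}\oplus\mathbf{10}\oplus\mathbf{16}$ of the minuscule representation of $E_6$ to $SO(10)$. Taking $\ell=\ell_Q$ the plane-quintic element therefore presents the remaining $26$ elements as one $W(D_5)$-orbit of size $10$ and one of size $16$, and it remains only to say which is which. Here the tool is the degeneration behaviour of the tetragonal construction together with the boundary dictionary of Proposition~\ref{BoundaryPrym}: running the tetragonal construction on $(Q,\tilde{Q})$ through the pencils $|g^2_5-p|$ cut out by projection from points $p\in Q$ produces, by the same mechanism as the trigonal construction in Theorem~\ref{Trigconst}, genus-six trigonal curves with double covers having Prym $\J(C)$, and I would check these are precisely the neighbours of $\ell_Q$, giving the $10$; the remaining $16$ lie at distance two from $\ell_Q$ under the tetragonal construction (Proposition~\ref{Tetspan6}), and reading Proposition~\ref{BoundaryPrym} backwards --- a Wirtinger cover has Prym $\J_5(X)$ with $X$ the normalization of the base --- forces them to be Wirtinger covers $C/p\sim q$, of which exactly $16$ meet a general normal direction over $\J(C)$. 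Along the way I would use Proposition~\ref{Hyperprym} (hyperelliptic Pryms decompose, so cannot be $\J(C)$ for $C$ general) and the unallowable, Beauville and reducible cases of Proposition~\ref{BoundaryPrym} to rule out any other type occurring in $\tilde{\P}_6^{-1}(x)$.

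The step I expect to be hardest is this last identification: showing that the size-$10$ $W(D_5)$-orbit is \emph{exactly} the trigonal locus and the size-$16$ orbit \emph{exactly} the Wirtinger locus, and in particular that the counts are $10$ and $16$ on the nose rather than merely ``finitely many of each,'' once one passes to the exceptional divisors of the blown-up moduli spaces. This needs a careful local analysis of the tetragonal construction near its degenerate fibers and of the induced maps on those exceptional divisors; the value of the group-theoretic skeleton $27=1+10+16$ is that it fixes the target numbers in advance, reducing the problem to checking incidences rather than computing degrees.
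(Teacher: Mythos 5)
First, a point of reference: the paper does not prove this statement at all --- it is quoted from Donagi's \emph{Fibers of the Prym map} \cite{MR1188194}, 4.3.7 --- so there is no internal proof to match. Donagi's argument there is explicit projective geometry rather than monodromy: the exceptional $\PP^2$ over $\J(C)\in\J_5$ is identified with (the dual of) the net $I_2$ of quadrics through the canonical curve $C\subset\PP^4$; the discriminant of the net is a plane quintic $Q$ whose natural double cover is $W^1_4(C)$, giving the unique quintic in the fiber; a general pencil in the net contains exactly $5$ singular quadrics whose $2\times 5=10$ rulings cut out the $10$ $g^1_4$'s producing, via the trigonal construction, the $10$ trigonal covers; and the base locus of that pencil is a quartic del Pezzo surface whose $16$ lines meet $C$ in the $16$ chords $\overline{pq}$ giving the Wirtinger covers $C/p\sim q$. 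Since $1+10+16=27$ equals the degree, nothing else can occur. Your proposal runs in the opposite direction --- classify what can appear, then count --- and while the $W(D_5)$ skeleton is a sensible organizing device, as written the argument has genuine gaps.

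Concretely: (a) you assert that the blow-ups keep $\tilde{\P}_6$ finite of degree $27$ "so that $\tilde{\P}_6^{-1}(x)$ still has $27$ elements"; finiteness gives $27$ points only with multiplicity, so you need the fiber over a general point of the exceptional divisor to be reduced, and your use of the $\mathbf{1}+\mathbf{10}+\mathbf{16}$ branching further requires that the incidence structure there is still that of the $27$ lines on a \emph{smooth} cubic surface with monodromy all of $W(D_5)$ --- compare Proposition \ref{P5Jac}, where the analogous cubic threefold over $\J_4$ becomes nodal, so degeneration of the configuration over the Jacobian locus is a live possibility that must be excluded. (b) The elimination step "reading Proposition \ref{BoundaryPrym} backwards forces the remaining points to be Wirtinger" is incomplete: among allowable singular covers, the Beauville covers of $\partial^{III}$ also have principally polarized five-dimensional Pryms, and Proposition \ref{BoundaryPrym} does not say these cannot be Jacobians; you must show $\J_5\not\subset\P_6(\partial^{III}\RMbar_6)$ (the reducible strata $\partial_i$, $\partial_{i,6-i}$ are easier, via indecomposability of a general $\J(C)$). (c) Most importantly, the two numbers that \emph{are} the proposition, $10$ and $16$, are exactly the steps you defer. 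The branching $\mathbf{27}=\mathbf{1}\oplus\mathbf{10}\oplus\mathbf{16}$ cannot supply them: it yields orbit sizes only after one knows that the partition by curve type coincides with the partition by incidence with $\ell_Q$ and that each type is a single orbit; if some neighbours of $\ell_Q$ were Wirtinger the counts would come out differently. The honest route to the numbers is Donagi's: $10=2\cdot\#(Q\cap\ell)$ for a general line $\ell$ in the net, and $16$ is the number of lines on a quartic del Pezzo surface.
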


Another extremely useful result is 

\begin{proposition}[{\cite[Corollary 2.3]{MR598683},\cite[4.8]{MR1188194},\cite{MR690465}}]
\label{Prymbranchlocus}
{\ \\}
The ramification locus of $\P_6$ is mapped six to one onto the branch locus, which is the locus of intermediate Jacobians of quartic double solids.
\end{proposition}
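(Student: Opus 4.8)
The plan is to separate the statement into the combinatorial assertion that the ramification divisor $R$ of $\P_6$ maps six-to-one onto the branch locus $B$, and the geometric identification of $B$ with intermediate Jacobians of quartic double solids (necessarily singular ones, so that the intermediate Jacobian is five-dimensional); the second part carries the real difficulty.

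For the degree, the input is Theorem \ref{DS}: over the open set where $\P_6$ is finite, each fiber is the configuration of $27$ lines on a cubic surface, with monodromy acting through $\WE_6$. I would first record the elementary facts that a reflection of $\WE_6$ acts on the $27$ lines with cycle type $1^{15}2^{6}$ and that $\WE_6\subset S_{27}$ contains no transposition. Next, the codifferential of $\P_6$ at $(C,\eta)$ is the multiplication map $\Sym^{2}H^{0}(C,K_{C}\otimes\eta)\to H^{0}(C,K_{C}^{\otimes 2})$, a map between spaces of dimension $15$, so $R$ is the determinantal divisor where it fails to be an isomorphism; along a general point of $R$ the corank is exactly one, so the ramification is simple, and since $B$ has codimension one in $\A_5$, a general point $b\in B$ corresponds to the mildest degeneration, in which the cubic surface $S_b$ acquires a single node (deeper degenerations being confined to higher codimension). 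Restricting $\P_6$ to a disk transverse to $B$ at $b$ exhibits it as a one-parameter degeneration of cubic surfaces to the one-nodal $S_b$, so by Picard--Lefschetz the local monodromy is the reflection in the vanishing cycle, a $(-2)$-class in the $E_6$-lattice $K_{S_b}^{\perp}$; therefore $\P_6^{-1}(b)$ consists of $15$ reduced points and $6$ points of multiplicity two --- the six doubled lines of a nodal cubic surface --- and $R\to B$ is six-to-one. (As a check, the same pattern $27=15+6\cdot2$ appears over tangent hyperplane sections of cubic threefolds, cf.\ Proposition \ref{Cubicfiber}.)

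For the identification of $B$, I would first make $R$ explicit: by the codifferential computation, $(C,\eta)\in R$ precisely when the Prym-canonical image of $C$ in $\PP^{4}$ lies on a quadric. Using that a general genus-six curve has a $g^{2}_{6}$, hence a plane model which is a four-nodal sextic $\Delta$ of geometric genus $6$, I would identify this condition with $\tilde C\to C$ being the double cover induced by a conic bundle over $\PP^{2}$ with discriminant $\Delta$; Beauville's conic-bundle--Prym correspondence \cite{MR0472843}, applied to the normalization of $\Delta$, then gives $\P_6(C,\eta)\cong\J(X)$ for $X$ the associated conic bundle threefold. Finally, projecting a quartic surface with five nodes from one of its nodes exhibits its $K3$ resolution as the double plane branched over such a four-nodal sextic, and simultaneously identifies the resolved five-nodal quartic double solid with the threefold $X$; conversely, every such double solid yields, via this conic bundle, a point $(C,\eta)$ of $R$ over its intermediate Jacobian. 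Checking both inclusions gives $B=\{\text{intermediate Jacobians of (five-nodal) quartic double solids}\}$, the irreducibility of $R$ ruling out further components, with the dimension count $\dim B=14=29-15$ (five-nodal quartics in $\PP^{34}$ modulo $\mathrm{PGL}_4$) as corroboration.

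The main obstacle is this geometric half --- producing the conic bundle and the quartic double solid $X$ functorially from the Prym datum $(C,\eta)$, equivalently from the quadric through the Prym-canonical curve or from the one-nodal cubic surface $S_b$, and verifying $\J(X)\cong\P_6(C,\eta)$ as principally polarized abelian varieties. This requires running Beauville's correspondence in a mildly singular setting, since the discriminant sextic has four nodes, so one must pass to its normalization and an admissible cover and keep careful track of the polarizations. Constructing the universal cubic surface over the finite locus of $\P_6$ and justifying the Picard--Lefschetz step in the first half are secondary technical points. All of this is carried out in \cite{MR598683,MR1188194,MR690465}.
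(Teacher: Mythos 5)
First, note that the paper does not prove this proposition: it is quoted verbatim from Donagi \cite{MR598683,MR1188194} and Clemens \cite{MR690465}, so your sketch can only be measured against the arguments in those references. Measured that way, your outline follows the standard route: the codifferential of $\P_6$ at $(C,\eta)$ is the multiplication map $\Sym^2H^0(K_C\otimes\eta)\to H^0(K_C^{\otimes 2})$ between $15$-dimensional spaces, so the ramification divisor is where the Prym-canonical curve lies on a quadric; the monodromy of the degree-$27$ cover acts through $\WE_6$ and a reflection has cycle type $1^{15}2^6$ on the $27$ lines, giving the count $27=15+6\cdot 2$ and hence degree six for $R\to B$; and the branch divisor is identified with intermediate Jacobians of five-nodal quartic double solids by projecting from a node to get a conic bundle over $\PP^2$ with four-nodal sextic discriminant and invoking Beauville's conic-bundle/Prym correspondence. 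Your inference that the double solids must be five-nodal (so that the intermediate Jacobian is five-dimensional and the locus is a divisor, $34-5-15=14$) is correct even though the bare statement omits it.

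The one step that does not stand as written is the Picard--Lefschetz paragraph. Over a general point of $\A_5$ there is no actual cubic surface: the fiber of $\P_6$ carries only the abstract incidence structure of the $27$ lines together with a $\WE_6$-local system, so ``restricting $\P_6$ to a transverse disk exhibits a one-parameter degeneration of cubic surfaces'' has no literal meaning, and Picard--Lefschetz for cubic surfaces cannot be invoked directly. What simple ramification at a general point of $R$ does give you is that the local monodromy is an involution in $\WE_6$ all of whose nontrivial orbits have length two; but $\WE_6$ has several involution classes ($A_1$, $2A_1$, $3A_1$, $4A_1$) with different numbers of $2$-cycles, so this alone does not yield the number six. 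The correct way to pin down the class is exactly what you relegate to a parenthetical ``check'': over the cubic threefold locus $\C$ the fiber \emph{is} the set of lines on an honest cubic surface (Proposition \ref{Cubicfiber}), the branch divisor meets $\C$ in the tangent hyperplane sections, and there the fiber visibly consists of the $21$ lines of a one-nodal cubic surface with the six lines through the node doubled; combined with the irreducibility of the branch divisor (which comes out of the double-solid description), this determines the local monodromy class generically. So the argument is salvageable with the ingredients you already have, but the ``check'' is the proof and the universal-family/Picard--Lefschetz step should be discarded rather than deferred as a technical point. The remaining heavy lifting --- running Beauville's correspondence for the four-nodal discriminant sextic and matching polarizations --- is, as you say, where the content of the cited papers lies.
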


Finally, for $\P_6$ we look over the boundary where we have

\begin{proposition}[{\cite[Corollary 6.5]{IzadiThesis}}]
\label{IzadiUnram}
{\ \\}
The restriction of the Prym map to $\partial^{II}\to \partial\At_5$ is generically unramified and finite.
\end{proposition}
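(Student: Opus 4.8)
The plan is to make the map completely explicit, reduce ``generically finite and unramified'' to the statement that a single differential is an isomorphism, and then isolate the nondegeneracy of an Abel--Prym ``sweep'' as the real content. By Proposition~\ref{BoundaryPrym}, a general point of $\partial^{II}\RMbar_6$ is the data of a pair $(X,\tilde X)\in\RM_5$ (with covering involution $\sigma$), an unordered pair of distinct points $\{p,q\}\subset X$, and a discrete gluing datum, and $\P_6$ sends it to the $\CC^\times$-extension of $A:=\P_5(X,\tilde X)$ of class $p_0-p_1\pm(q_0-q_1)$. Writing $a\colon\tilde X\to A$ for the Abel--Prym map $\tilde r\mapsto \tilde r-\sigma\tilde r$ and recalling $\partial\At_5\cong\X_4$, the universal Kummer over $\A_4$, the map in question is
\[
\Psi\colon\ (X,\tilde X,\{p,q\},\mathrm{gl})\ \longmapsto\ \bigl(A,\ [\,a(p_0)\pm a(q_0)\,]\bigr),
\]
where $[\,\cdot\,]$ is the class modulo $\pm1$ and the sign records the gluing. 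Thus $\Psi$ factors through the forgetful map $\partial^{II}\RMbar_6\to\RM_5$ (general fibre a surface) followed, fibrewise over $A$, by the second Abel--Prym sum $\Sym^2\tilde X\to A$, everything lying over $\X_4\to\A_4$ pulled back along $\P_5\colon\RM_5\to\A_4$. Since $\dim\partial^{II}\RMbar_6=\dim\M_6-1=14=\dim\X_4$, it suffices to find one point at which $d\Psi$ is an isomorphism: by semicontinuity $\Psi$ is then étale on a dense open, and, being dominant between varieties of equal dimension, also generically finite there.

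Next I would decompose the source tangent space. At a general $(C,\tilde C)\in\partial^{II}\RMbar_6$ the $14$-dimensional space $T_{(C,\tilde C)}\partial^{II}\RMbar_6$ carries the filtration ``forget $\{p,q\}$'': quotient $T_{(X,\tilde X)}\RM_5$ (dimension $12$), kernel $T_pX\oplus T_qX$ (dimension $2$). The $T_A\A_4$-component of $d\Psi$ is $d\P_5$ precomposed with this quotient; for general $(X,\tilde X)$ it is surjective with $2$-dimensional kernel $K:=T\bigl(\P_5^{-1}(A)\bigr)$, using that $\P_5$ dominates $\A_4$ (Theorem~\ref{PrymisProper}). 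Hence $\ker\bigl(d\Psi\to T_A\A_4\bigr)$ is an extension of $K$ by $T_pX\oplus T_qX$, of dimension $4=\dim T_xA$, and $d\Psi$ is an isomorphism if and only if this $4$-space maps isomorphically onto $T_xA$. The summand $T_pX\oplus T_qX$ maps via $da_{p_0}\oplus da_{q_0}$ onto the tangent plane at $x$ of the image surface $S_{X,\tilde X}:=a(\tilde X)+a(\tilde X)$, which is genuinely $2$-dimensional provided (2a) the second Abel--Prym sum $\Sym^2\tilde X\to A$ is, for general $(X,\tilde X)$, generically finite onto its (surface) image; and the complementary requirement is (2b) that moving $(X,\tilde X)$ within the $2$-dimensional fibre $\P_5^{-1}(A)$ displaces $a(p_0)+a(q_0)$ in the two directions normal to $S_{X,\tilde X}$, i.e.\ the surfaces $S_{X,\tilde X}$ sweep $A$ transversally over that fibre. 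Granting (2a) and (2b), $d\Psi$ is an isomorphism at a general point and the proposition follows.

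Claim (2a) I expect to be routine: for general $(X,\tilde X)$ the Prym $A$ is a general (hence simple) $4$-dimensional ppav, so the Abel--Prym curve $a(\tilde X)$ generates $A$ and lies in no translate of a proper abelian subvariety, whence the usual Abel-sum argument gives that $\Sym^d a(\tilde X)\to A$ is generically finite onto a variety of dimension $\min(d,4)$, so $d=2$ produces a surface. The main obstacle is (2b): one must control how the Abel--Prym curve of $\tilde X$, together with the marked points $p,q$, varies as $(X,\tilde X)$ runs over a fixed fibre of $\P_5$, and show the induced motion of $a(p_0)+a(q_0)$ is nondegenerate transverse to $S_{X,\tilde X}$. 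This is a local, infinitesimal statement about the fibre of $\P_5$ (e.g.\ via the $g^1_4$-structure on $X$ and the description of the allowable Prym), and it is where the real work lies; the remainder is dimension bookkeeping.

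An equivalent route, and morally the same computation, is to degenerate: take a general $1$-parameter family $B_t\in\A_5$ with $B_t\to(A,x)\in\partial\At_5$, so that by Theorem~\ref{DS} each $\P_6^{-1}(B_t)$ consists of $27$ points; by properness of $\RMbar_6$ these have limits, and by properness of the allowable Prym map (Theorem~\ref{PrymisProper}) those limits landing in $\partial^{II}$ account for the local degree of $\partial^{II}\to\partial\At_5$. Generic unramifiedness is then the assertion that these limits are distinct and reached transversally, which again demands a semistable-reduction/toroidal analysis at the general point of $\partial^{II}\RMbar_6$; the subtle point in this approach is that, a priori, a limit could land at an indeterminacy point of the extended Prym map, so part of the work is showing it does not. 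Either way, the crux is the same local degeneration analysis, and once it is in place the dimension counts above close the argument.
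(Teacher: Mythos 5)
The proposition you are asked to prove is not proved in the paper at all: it is quoted verbatim from Izadi's thesis (Corollary 6.5), so there is no internal argument to compare against. Judged on its own terms, your proposal is a correct and well-organized \emph{reduction}, but not a proof. The parametrization of a general point of $\partial^{II}\RMbar_6$ by $(X,\tilde X)\in\RM_5$ plus $\{p,q\}$ plus gluing data, the identification of the target with $\X_4$, the dimension count $14=14$, and the splitting of $\ker\bigl(d\Psi\to T_A\A_4\bigr)$ into the ``move $p,q$'' part and the ``move within $\P_5^{-1}(A)$'' part are all sound, and your claim (2a) is indeed routine for a general (hence simple) Prym $A\in\A_4$ by the standard stabilizer argument for $a(\tilde X)+a(\tilde X)$.

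The genuine gap is exactly the one you flag as (2b): that as $(X,\tilde X)$ moves in the two-dimensional fibre $\P_5^{-1}(A)$, the class $a(p_0)+a(q_0)$ moves transversally to the surface $a(\tilde X)+a(\tilde X)$. This is not a bookkeeping step; it is the entire content of the proposition. Everything before it is a restatement of ``generically unramified and finite'' in infinitesimal coordinates, so without (2b) nothing has been established — for instance, your argument as written cannot yet rule out that every $\CC^\times$-extension datum produced by $\partial^{II}$ lies on the fixed surface $a(\tilde X)+a(\tilde X)$ translated only along itself, which would make $\Psi$ everywhere degenerate. Proving (2b) requires an actual computation with the codifferential of the Prym map on the boundary (in Izadi's thesis this is done by an explicit analysis of the extension data and the Prym-canonical geometry of $\tilde X$), or equivalently the semistable degeneration analysis you sketch in your last paragraph — where, as you note, one must also exclude that the $27$ limits fall into indeterminacy loci or into other boundary components. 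Your proposal correctly isolates where the work is, but leaves that work undone.
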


Now, we recall the major results on $\P_5$:

\begin{proposition}[{\cite[Theorem 5.2]{MR1188194}}]
\label{genus5maps}
{\ \\}
\begin{enumerate}
 \item There exists an involution $\lambda:\RM_5\to \RM_5$ such that $\P_5\circ \lambda=\P_5$.
 \item There is a natural birational map $\chi:\At_4\dashrightarrow \overline{\RC}^0$, where $\overline{\RC}^0$ is the space of cubic threefolds with an even point of order two on the intermediate Jacobian, including nodal cubic threefolds as limits.
\end{enumerate}
\end{proposition}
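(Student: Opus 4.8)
The plan is to deduce both parts from the classical plane-quintic dictionaries for genus five Jacobians and for cubic threefold intermediate Jacobians, Mumford's exact sequence, and the tetragonal construction. Recall (it is visible in Propositions \ref{Jacfiber} and \ref{Cubicfiber}, or is classical) two ``quadric--line--complex'' correspondences: a general genus five Jacobian is $\P(Q,\mu_Q)$ for a plane quintic $Q$ with an \emph{odd} point of order two, $(Q,\mu_Q)\in\RQ^0$; and the intermediate Jacobian of a general cubic threefold $X$ is $\P(Q',\mu_{Q'})$ for the conic-bundle discriminant quintics $(Q',\mu_{Q'})\in\RQ^1$ attached to the lines of $X$, so $\RQ^1\to\C$ has the Fano surface as generic fibre. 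Given $(C,\mu)\in\RM_5$ general, I would write $\J(C)=\P(Q,\mu_Q)$ with $(Q,\mu_Q)\in\RQ^0$, and use Mumford's sequence to write $\mu=\bar\nu$ with $\nu\in\mu_Q^{\perp}\subset\J(Q)[2]$; thus $Q$ carries two orthogonal points of order two, $\mu_Q$ and $\nu$. The involution $\lambda$ should be the operation interchanging their roles: forgetting $\mu_Q$ and keeping (a lift of) $\nu$ produces a second curve-with-level lying over the same point of $\A_4$, which is $\lambda(C,\mu)$; that $\lambda^2=\mathrm{id}$ is then formal from the symmetry in $\mu_Q\leftrightarrow\nu$. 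The first substantive point is $\P_5\circ\lambda=\P_5$: I would show that $(C,\mu)$ and $\lambda(C,\mu)$ are joined by a sequence of tetragonal constructions — tracked through the dictionary via a pencil $g^1_4$ on $Q$ — so their Pryms agree by Donagi's tetragonal theorem; equivalently, both are the four-dimensional iterated Prym of the Klein-four cover of $Q$ attached to $\langle\mu_Q,\nu\rangle$, which is manifestly symmetric in its two generators. This establishes (1).

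For (2) I would read a cubic threefold off a general $A\in\A_4$. Choose $(C,\mu)\in\P_5^{-1}(A)$ and the attached $(Q,\mu_Q)\in\RQ^0$ with $\mu=\bar\nu$; passing to the lift of $\nu$ that is of type $\RQ^1$ gives $\P(Q,\cdot)=\J(X)$ for a cubic threefold $X$, and $\mu_Q$ transports to a point of order two $\nu_X$ on $\J(X)$, which one checks is even. Set $\chi(A)=(X,\nu_X)$. The second substantive point is that this is independent of the chosen $(C,\mu)$ and of the conic-bundle line: $\P_5^{-1}(A)$ is a surface swept out by tetragonal constructions on which $\lambda$ acts, and the construction identifies $\P_5^{-1}(A)/\lambda$ with the Fano surface $F(X)$, so $(X,\nu_X)$ is constant along the fibre. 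Surjectivity of $\P_5$ (Theorem \ref{PrymisProper}, $g=5\le 6$) makes $\chi$ dominant; a cubic threefold is reconstructed from its Fano surface and $\nu_X$ from the Prym-level data, so $\chi$ is generically injective, hence — as $\dim\At_4=\dim\overline{\RC}^0=10$ — birational. Finally I would extend $\chi$ across the boundary: $\partial\At_4=\X_3$ parameterises $\CC^{\times}$-extensions of three-dimensional ppav's, which match the one-nodal cubic threefolds; projecting such a cubic from its node realises its generalised intermediate Jacobian as a $\CC^{\times}$-extension of $\J(\Gamma)$ with $\Gamma$ the associated genus four curve, the even point of order two descends to $\bar\nu\in\J(\Gamma)[2]$ with $\P_4(\Gamma,\bar\nu)$ the three-dimensional ppav, and the extension class (the node on $\Gamma$) records the point of its Kummer variety; the dimensions match again ($9=9$).

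The hard part is everything hidden in the slogan ``the fibre of $\P_5$ over a general $A$ is a tetragonal orbit, a double cover of a Fano surface with deck involution $\lambda$.'' Concretely: (i) making $\P_5\circ\lambda=\P_5$ exact, either by running the tetragonal construction explicitly through both dictionaries or by a clean treatment of iterated Pryms of Klein-four covers; (ii) showing that the conic-bundle quintics of the lines on a fixed cubic $X$, after the lift choices above, always feed back the \emph{same} $A$ — the genus five analogue of Proposition \ref{Tetspan6}, i.e.\ controlling exactly how much of the fibre the tetragonal construction reaches; and (iii) the generic injectivity of $\chi$, which rests on a Torelli-type rigidity reconstructing a cubic threefold together with its point of order two from the decorated Fano surface. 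By contrast the boundary analysis, while delicate, should be routine given the structure of $\partial\At_g$, of the double covers over $\partial^{II}$, and Proposition \ref{IzadiUnram}.
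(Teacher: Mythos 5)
The paper does not prove this proposition; it is imported wholesale from \cite[Theorem 5.2]{MR1188194}, so your sketch is being measured against Donagi's argument. Your outline does track that argument in broad strokes (discriminant quintics of nets of quadrics, Mumford's sequence, the fibre of $\P_5$ as a tetragonal orbit identified with the double cover of a Fano surface, $\lambda$ its deck involution), and you are candid about where the weight sits. But there are genuine gaps beyond the ones you flag. The one concrete justification you offer for $\P_5\circ\lambda=\P_5$ --- that both sides are ``the four-dimensional iterated Prym of the Klein-four cover of $Q$, which is manifestly symmetric in its two generators'' --- is not an argument: $C$ is not a cover of $Q$, the double cover of $C$ given by $\mu=\bar\nu$ is not induced by any curve over $Q$, and ``the Prym of $(\P(Q,\nu),\bar{\mu}_Q)$'' has no a priori meaning when $\P(Q,\nu)$ is not a Jacobian; the symmetry of iterated Pryms in the two generators is essentially the assertion to be proved. (A route that actually closes this step using only tools already quoted in the paper: theta symmetry gives $\beta_5(\J(C),\mu)=\beta_5(\J(C'),\bar{\mu}_Q)$, the Schottky--Jung identities convert both sides into $\alpha_4$ of the two genus-four Pryms, and generic injectivity of $\alpha_4$ \cite{MR1305875} forces those Pryms to coincide. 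You do not invoke this.) Separately, your recipes for $\lambda$ and $\chi$ both hinge on the two lifts $\nu,\nu+\mu_Q$ of $\mu$ splitting as one ``Jacobian-type'' and one ``cubic-threefold-type''; this needs the theta-characteristic parity count ($\mathcal{O}_Q(1)$ is itself a theta characteristic with $h^0=3$, so the Weil-pairing identity forces an odd number of odd members in the isotropic triple), which you never perform --- and which is delicate here because the paper's own parity conventions are inconsistent (compare Proposition \ref{Jacfiber} with Proposition \ref{notbielliptic} and the discussion following Theorem \ref{SJId}).

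The second load-bearing claim --- that every $(C,\mu)$ in the surface $\P_5^{-1}(A)$, each with its own discriminant quintic, produces the \emph{same} cubic threefold, so that $\chi$ is single-valued --- is the entire content of part (2), and in your write-up it is only asserted (``the construction identifies $\P_5^{-1}(A)/\lambda$ with $F(X)$''); labelling it hard part (ii) is accurate but does not discharge it. Finally, your boundary extension is wrong as stated: by Proposition \ref{P5Jac} the one-nodal cubic threefolds arise as $\chi(\J(B))$ for $B\in\M_4$, i.e.\ from the interior locus $\J_4\subset\A_4$ (projection from the node yields the genus-four curve), not from $\partial\At_4\cong\X_3$; both loci are nine-dimensional, so they cannot both map birationally onto the nodal cubics, and your matching contradicts a proposition the paper states explicitly. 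Since birationality only requires control on a dense open set, that discussion should simply be dropped; what must actually be supplied are the tetragonal-orbit and well-definedness arguments you have labelled (i) and (ii), which remain open in the proposal.
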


Given these two maps, Donagi showed

\begin{proposition}[{\cite[Theorem 3.3]{MR598683}}]
{\ \\}
For $A\in\A_4$ generic, $\P_5^{-1}(A)/\lambda$ is isomorphic to the Fano surface of lines in $\chi(A)$, and $(C_1,\tilde{C}_1)$  and $(C_2,\tilde{C}_2)$ are tetragonally related if and only if the corresponding lines intersect.  Thus, any two points in $\P_5^{-1}(A)$ are related by at most two tetragonal constructions.
\end{proposition}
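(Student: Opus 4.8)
The plan is to transport the cubic-threefold description of the genus six fiber, Proposition~\ref{Cubicfiber}, down to genus five using the maps $\chi$ and $\lambda$ of Proposition~\ref{genus5maps}. Fix a generic $A\in\A_4$ and set $\chi(A)=(X,\eta)$, so that $X$ is a smooth cubic threefold and $\eta$ an even point of order two on its intermediate Jacobian; write $F(X)$ for the Fano surface of lines in $X$. The map $\chi$ is, by construction, a birational section of the restriction $\P_5|_{\overline{\RC}^0}\colon\overline{\RC}^0\dashrightarrow\A_4$, so $\P_5(\chi(A))=A$. Hence $X$, identified with its intermediate Jacobian as in Proposition~\ref{Cubicfiber}, lies in $\C\subset\A_5$; its $\P_6$-fiber is $F(X)$, and in the blown-up picture $\tilde{\P}_6^{-1}(X,H)=\{\ell\subset X\cap H\}$ the $27$ lines of the cubic surface $X\cap H$ carry a natural incidence relation which, by Theorem~\ref{DS} and Proposition~\ref{Tetspan6}, is precisely the genus six tetragonal relation on the fiber.

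The core of the argument is a natural two-to-one correspondence between $\P_5^{-1}(A)$ and $F(X)$ whose fibers are the $\lambda$-orbits. Starting from a general line $\ell\subset X$, projection away from $\ell$ presents $X$ as a conic bundle over $\PP^2$ with discriminant a smooth plane quintic $\Gamma_\ell$ carrying the \'etale double cover $\tilde{\Gamma}_\ell$ that records the two components of each singular fiber; this is the genus six Prym curve $(\Gamma_\ell,\mu_\ell)\in\RM_6$ with $\P_6(\Gamma_\ell,\mu_\ell)=X$, and as $\ell$ ranges over $F(X)$ one sweeps out the whole $\P_6$-fiber (this is the content of Proposition~\ref{Cubicfiber}). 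The Mumford sequence $0\to\langle\mu_\ell\rangle\to\mu_\ell^\perp\to\P(\Gamma_\ell,\mu_\ell)[2]\to 0$ lifts $\eta$ to some $\nu_\ell\in\mu_\ell^\perp$, well defined up to adding $\mu_\ell$, so that $(\Gamma_\ell,\mu_\ell,\nu_\ell)\in\RRM_6$ and $\RP_6(\Gamma_\ell,\mu_\ell,\nu_\ell)$ is $\chi(A)$ regarded as a point of $\RA_5$. One then needs a genus-lowering reduction, compatible with the Prym maps, carrying $(\Gamma_\ell,\mu_\ell,\nu_\ell)$ to a genus five Prym curve $(C_\ell,\mu'_\ell)\in\RM_5$; the hypothesis that $\eta$ is \emph{even} is what guarantees that $C_\ell$ is a general smooth genus five curve rather than a hyperelliptic or degenerate one (cf.\ the role of parity for plane quintics in Proposition~\ref{Jacfiber}), and, the reduction being compatible with Prym, $\P_5(C_\ell,\mu'_\ell)=\P_5(\chi(A))=A$. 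The two lifts $\nu_\ell$ and $\nu_\ell+\mu_\ell$ produce exactly the two points of $\P_5^{-1}(A)$ exchanged by $\lambda$, so $\ell\mapsto(C_\ell,\mu'_\ell)$ descends to a map $F(X)\to\P_5^{-1}(A)/\lambda$; running the conic-bundle dictionary in reverse shows this map is birational, hence an isomorphism of surfaces. Finally, since the tetragonal construction commutes with the Prym map and the genus six tetragonal relation on the fiber is detected by incidence of lines, this correspondence carries incidence of lines to the genus five tetragonal relation.

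Granting all of this, the proposition is immediate: $\P_5^{-1}(A)/\lambda\cong F(X)=F(\chi(A))$ is the first assertion, incidence of lines corresponding to tetragonal relatedness is the second, and since any two lines on a generic cubic threefold admit a common transversal line lying on $X$ — equivalently, the incidence curves $C_{\ell_1},C_{\ell_2}\subset F(X)$ always meet, just as on each cubic surface $X\cap H$ — any two points of $\P_5^{-1}(A)$ are joined by a chain of at most two tetragonal constructions, mirroring Proposition~\ref{Tetspan6}. The main obstacle is the middle paragraph: one must make the genus six to genus five reduction precise, show it is defined over the whole relevant locus, verify that it is \emph{exactly} two-to-one with the $\lambda$-orbits as fibers, confirm that it lands in the single fiber $\P_5^{-1}(A)$, and check that it intertwines the tetragonal constructions of the two genera. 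Essentially all the parity bookkeeping, and all the input from Proposition~\ref{genus5maps} on the precise natures of $\chi$ and $\lambda$, is concentrated there.
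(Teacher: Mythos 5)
The paper does not prove this proposition at all: it is imported verbatim from Donagi \cite{MR598683} as background, so there is no internal argument to compare against. Judged on its own terms, your sketch identifies a plausible strategic skeleton (pass from $A$ to $\chi(A)=(X,\eta)$, use the conic-bundle description of $\P_6^{-1}(X)$ as the Fano surface $F(X)$, and descend to genus five using the lift of $\eta$ through the Mumford sequence), but the entire mathematical content of the theorem is concentrated in the step you leave as a black box. The ``genus-lowering reduction'' $(\Gamma_\ell,\mu_\ell,\nu_\ell)\mapsto(C_\ell,\mu'_\ell)$ is never constructed, and nothing you cite produces it: Proposition~\ref{Cubicfiber} and Theorem~\ref{thetasym} give you, at best, that $P(\Gamma_\ell,\nu_\ell)$ is (for the right parity of $\nu_\ell$) the Jacobian of a genus five curve $C_\ell$ carrying the semiperiod $\bar\mu_\ell$, and that $\beta_5(J(C_\ell),\bar\mu_\ell)=\beta_5(X,\eta)$. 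To conclude $\P_5(C_\ell,\bar\mu_\ell)=A$ you would then need the Schottky--Jung identity (Theorem~\ref{SJId}) together with the generic injectivity of $\alpha_4$ and the characterization of $\chi$ as a section of $\beta_5|_{\overline{\RC}^0}$; as written, your line ``$\P_5(C_\ell,\mu'_\ell)=\P_5(\chi(A))=A$'' does not even typecheck, since $\chi(A)$ lies in $\RA_5$, not in the domain $\RM_5$ of $\P_5$. Likewise, the claims that the fibers of $\ell\mapsto(C_\ell,\mu'_\ell)$ are exactly the $\lambda$-orbits and that the map is surjective onto $\P_5^{-1}(A)/\lambda$ are asserted, not argued; surjectivity in particular is where Donagi's actual proof does real work, using the one-dimensional family $W^1_4(C)$ of tetragonal pencils on a genus five curve to sweep out the incidence curve $D_\ell\subset F(X)$.

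Two further points need attention even granting the reduction. First, the parity bookkeeping you defer is genuinely delicate (and the paper's own statements about which parity of semiperiod on a plane quintic yields a Jacobian versus a cubic threefold are not used consistently, cf.\ the discussion after Theorem~\ref{thetasym} versus Proposition~\ref{Jacfiber}), so ``$\eta$ even guarantees $C_\ell$ is a general smooth genus five curve'' cannot be waved through. Second, the final assertion that any two points of $\P_5^{-1}(A)$ are within two tetragonal moves does not follow purely from $D_{\ell_1}\cdot D_{\ell_2}=5>0$ on $F(X)$: two points lying over the \emph{same} line $\ell$ are not separated by incidence of distinct lines, so you must additionally analyze how the double cover $\P_5^{-1}(A)\to F(X)$ restricts to an incidence curve to see that $\lambda$-partners are also joined by a length-two tetragonal chain. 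In short, the proposal is a reasonable road map but does not constitute a proof; the missing middle is precisely the theorem.
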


Before moving on, we will identify the fibers of $\P_5$ over Jacobians:

\begin{proposition}[{\cite[Theorem 5.14]{MR1188194}}]
\label{P5Jac}
{\ \\}
Let $B\in\M_4$ be a general curve of genus 4 and let $(X,\delta)=\chi(\J(B))$.  Then
\begin{enumerate}
 \item $X$ is a nodal cubic threefold
 \item The double cover of the Fano surface of lines in $X$ is reducible and each component is isomorphic to $\Sym^2B$
 \item $\P_5^{-1}(\J(B))$ is isomorphic to the double cover of the Fano surface of lines of $X$, with one component of trigonal curves $T_{p,q}$ and one of Wirtinger double covers $S_{p,q}$, for $(p,q)\in \Sym^2B$
 \item The tetragonal construction takes $S_{p,q}$ and $T_{p,q}$ to $S_{r,s}$ and $T_{r,s}$ if and only if $p+q+r+s$ is a special divisor on $B$, and $\lambda$ exchanges $S_{p,q}$ and $T_{p,q}$
 \item Two objects of $\P_5^{-1}(\J(B))$ are related by at most two tetragonal constructions.
\end{enumerate}
\end{proposition}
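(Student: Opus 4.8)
The plan is to bootstrap from the structure of the \emph{generic} fibre of $\P_5$ recalled above --- Donagi's identification of $\P_5^{-1}(A)/\lambda$ with the Fano surface of lines of $\chi(A)$ for $A\in\A_4$ generic --- down to the special point $\J(B)\in\J_4$, where $\chi$ is only birational and in fact lands on the nodal locus of $\overline{\RC}^0$, so that the generic description does not apply directly.

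\textbf{The cubic threefold and its Fano surface (parts (1)--(2)).} Since $\chi$ (Proposition~\ref{genus5maps}) degenerates to nodal cubic threefolds over the boundary of $\overline{\RC}^0$, and since (by Collino's analysis of one--nodal cubic threefolds) such an $X$ has $\J(B)$ for its generalised intermediate Jacobian, where $B$ is the genus--four curve cut out, in the $\PP^3$ obtained by projecting $X$ from its node, by the tangent cone quadric and a residual cubic --- every general genus--four curve arising in this way --- the point $\J(B)\in\J_4$ maps under $\chi$ to such a nodal $X$, with $\delta$ the distinguished even two--torsion point; this is (1). For this $X$ the lines through the node are exactly those whose direction lies on $B$, hence form a copy of $B$, while any other line $\ell$ spans with the node a plane cutting $X$ in a nodal plane cubic, which forces the residual of $\ell$ to be a pair of lines through the node; thus $\ell\mapsto\{\text{that pair}\}$ defines a birational map from the Fano surface $F(X)$ of lines on $X$ to $\Sym^2 B$, which a count of lines upgrades to an isomorphism. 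The double cover of $F(X)$ intervening in the Prym picture --- the one pulling back, along $F(\chi(A))\cong\P_5^{-1}(A)/\lambda$, to $\P_5^{-1}(A)$ --- is classified by a two--torsion class that becomes trivial for the nodal $X$ with $B$ general, so it is $\Sym^2 B\sqcup\Sym^2 B$; this is (2).

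\textbf{Filling the fibre and naming the components (part (3)).} By Theorem~\ref{PrymisProper} the Prym map is proper over $\A_4$ on the allowable locus, so $\P_5^{-1}(\J(B))$ is complete and two--dimensional (the fibre dimension being $\dim\RM_5-\dim\A_4=2$). Approaching $\J(B)$ along a family $A_t\to\J(B)$ with $A_t$ generic, so that $\chi(A_t)=X_t$ is smooth and degenerates to $X$, the flat limit of $\P_5^{-1}(A_t)/\lambda=F(X_t)$ is $F(X)\cong\Sym^2 B$ and the Prym double cover degenerates to the trivial one, identifying $\P_5^{-1}(\J(B))$ with $\Sym^2 B\sqcup\Sym^2 B$. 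To name the pieces: the trigonal (Recillas) construction of Theorem~\ref{Trigconst} gives a bijection between genus--five trigonal curves with their distinguished double cover and genus--four Jacobians equipped with a chosen $g^1_4$; on a general $B$ the $g^1_4$'s are parametrised by $\Sym^2 B$ via $D\mapsto K_B-D$, yielding one copy, the $T_{p,q}$. Dually, Wirtinger covers over $\partial^I$ have Prym $\J(X)$ for $C=X/p\sim q$ by Proposition~\ref{BoundaryPrym}, and those over $\J(B)$ are precisely the ones with $X=B$, parametrised by the glued pair $(p,q)\in\Sym^2 B$; these form the second copy, the $S_{p,q}$. As complete irreducible surfaces, disjoint (smooth trigonal curves versus boundary objects), sitting inside the two--dimensional fibre, these are exactly its two components; and since $\lambda$ is a fixed--point--free involution over $\J(B)$ it must interchange them, the indexing being matched by the compatibility of $\lambda$ with the tetragonal construction.

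\textbf{Incidence, connectivity, and the main obstacle (parts (4)--(5)).} By the generic--fibre description (\cite[Theorem~3.3]{MR598683}), two points of $\P_5^{-1}(A)/\lambda=F(\chi(A))$ are tetragonally related exactly when the corresponding lines on $\chi(A)$ meet; unwinding the projection--from--the--node dictionary for the nodal $X$ shows that the lines labelled $(p,q)$ and $(r,s)$ meet precisely when $p+q+r+s$ fails to impose independent conditions on $|K_B|$, i.e.\ is special on $B$. Combined with the $\lambda$--interchange and the trigonal labelling this is (4); and since for arbitrary $(p,q),(r,s)$ the conditions ``$p+q+u+v$ special'' and ``$r+s+u+v$ special'' each cut a divisor in the surface $\Sym^2 B$ and hence meet, any two points of the fibre are joined by a chain of at most two tetragonal constructions, which is (5). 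The main obstacle is the degeneration step in stage 3: because $\J(B)$ lies on $\J_4$, well inside the locus where $\chi$ produces nodal cubics and the generic fibre description fails, one must show that the fibre of $\P_5$ over $\J(B)$ acquires no extra and no non--reduced components beyond the double cover of $F(X)$; controlling this limit needs the properness of Theorem~\ref{PrymisProper} together with a careful study of how the Fano schemes of cubic threefolds behave as the cubic acquires a node, of the kind carried out in \cite{MR1188194,IzadiThesis}, with the precise identification of $F(X)$ and of its Prym double cover for the nodal $X$ as the supporting geometric input.
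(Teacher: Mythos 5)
The paper itself contains no proof of this statement: it is quoted verbatim from Donagi's \emph{The fibers of the Prym map} \cite{MR1188194}, Theorem 5.14, so the only honest comparison is with that source. Your sketch does follow the outline of Donagi's argument --- projection of the one--nodal cubic from its node to produce the $(2,3)$--curve $B\subset\PP^3$, the residual--pair description of lines not through the node, the Recillas/trigonal construction supplying the $T_{p,q}$ and the Wirtinger covers supplying the $S_{p,q}$, and the incidence--of--lines criterion for tetragonal relatedness --- so as a reconstruction of the cited proof it is on the right track.

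Three points, however, are not correct as written. First, the Fano surface of a one--nodal cubic threefold is \emph{not} isomorphic to $\Sym^2 B$: it is non-normal, with $\Sym^2B$ as its normalization, glued to itself along two copies of $B$ (the lines through the node, each arising as the residual line of two distinct pairs, one for each $g^1_3$ on $B$). Correspondingly, the relevant double cover is \emph{reducible but not disconnected}: its two $\Sym^2B$--components meet along copies of $B$. Your disjointness argument (``smooth trigonal curves versus boundary objects'') fails exactly there, because when $|K_B-p-q|$ acquires a base point --- i.e.\ when $(p,q)$ lies on one of those copies of $B$ --- the Recillas transform $T_{p,q}$ degenerates to a nodal object and the two families collide. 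Second, in part (5) the assertion that two divisors in the surface $\Sym^2B$ must meet is unjustified (two curves on a projective surface can be disjoint); you need either to compute that the intersection number of the two translates of the divisor $\{u+v:\ h^0(p+q+u+v)\geq 2\}$ is positive, or to use Donagi's geometric route: two lines of $X$ span at most a $\PP^3$, the corresponding hyperplane section is a cubic surface, and any two lines on a cubic surface admit a common transversal. Third, the step you yourself flag --- that the flat limit of $\P_5^{-1}(A_t)$ as $A_t\to\J(B)$ is exactly the split double cover of $F(X)$, with no extra or embedded components --- is the actual content of Donagi's proof and remains a citation rather than an argument in your write-up.
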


\subsection{Theta Functions}

Now, we shall describe one more major technical tool used in this paper.  The \textit{Riemann theta function} on $\HH_g\times\CC^g$ is given by \[\theta(\Omega,z)=\sum_{n\in\ZZ^g}\exp[\pi i(\leftexp{t}{n}\Omega n+2\leftexp{t}{n}z)].\] This function is periodic with respect to $\ZZ^g$ and is multiplied by an exponential factor with respect to $\Omega\ZZ^g$.  Thus, the zero locus of $\theta(\Omega,z)$ is periodic for $\ZZ^g\oplus \Omega\ZZ^g$ and gives a divisor $\Theta$ on $A$, the \textit{theta divisor}.

For $\epsilon,\delta\in\QQ^g$, we define a \textit{theta function with characteristics} to be \[\thetachar{\epsilon}{\delta}{\Omega}{z}=\exp[\pi i (\leftexp{t}{\epsilon}\Omega\epsilon+2\leftexp{t}{\epsilon}(z+\delta))]\theta(\Omega,z+\Omega\epsilon+\delta)\] which is essentially the translate of $\theta(\Omega,z)$ by $\Omega\epsilon+\delta$.  Evaluating at $z=0$, these are Siegel modular forms of weight $\frac{1}{2}$ and level $(4,8)$, so they are only well-defined on $\A_g(4,8)$, not on $\A_g$ itself.

We will also need $\theta_2[\epsilon](\Omega,z)=\thetachar{\epsilon}{0}{2\Omega}{0}$ where $\epsilon\in (\frac{1}{2}\ZZ/\ZZ)^g$, the second order theta functions, and $\theta\left[\begin{array}{cc}\epsilon&0\\0&1/2\end{array}\right](2\Omega,0)$ where $\epsilon\in (\frac{1}{2}\ZZ/\ZZ)^{g-1}$.  These are modular forms of weight $(2,4)$, and we we can use them to define maps $\alpha_g\colon\A_g(2,4)\to\PP(U_g)$ and $\beta_g\colon\RA_g(2,4)\to\PP(U_{g-1})$ where $U_g$ is the vector space of functions $(\ZZ/2\ZZ)^g\to \CC$ by setting $\alpha_g(\Omega)_\epsilon=\theta_2[\epsilon](\Omega,0)$ and
$\beta_g(\Omega)_\epsilon = \theta\left[\begin{array}{cc}\epsilon&0\\0&1/2\end{array}\right](2\Omega,0)$.

We get maps on $\A_g$ and $\RA_g$ by noting that $G_g=\Gamma_g/\Gamma_g(2,4)$ acts on $\A_g(2,4)$, $\RA_g(2,4)$ and $\PP(U_g)$ in compatible ways, so that if $\PP_g=\PP(U_g)/G_g$, we have $\alpha_g:\A_g\to\PP_g$ and $\beta_g:\RA_g\to \PP_{g-1}$.

The primary purpose of this paper will be to understand the fibers of $\beta_5$.  For this, we will use 

\begin{theorem}[Theta symmetry {\cite[Theorem 3.1]{MR898055}}]
\label{thetasym}
{\ \\}
Let $C\in\M_{g+1}$ be a curve of genus $g+1$ and let $\{0,\mu_0,\mu_1,\mu_2\}$ be a rank 2 isotropic subgroup of $\J_{g+1}(C)_2$ (thus, $\mu_2=\mu_0+\mu_1$).  For $i=0,1,2$ we have a Prym variety $P_i=P(C,\mu_i)\in\A_g$ and on it a uniquely determined semiperiod $\nu_i$, the image of $\mu_j$, $j\neq i$ in $P_i$.

The point $\beta(P_i,\nu_i)$ is independent of $i=0,1,2$.
\end{theorem}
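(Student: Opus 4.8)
The plan is to reduce the statement to a symmetry of the genus-$(g+1)$ theta functions under the action of an element of $\Sp(2(g+1),\ZZ)$ that permutes the three semiperiods $\mu_0,\mu_1,\mu_2$. First I would recall the classical ``second-order theta'' identity that expresses $\beta_{g+1}$ in terms of products of first-order theta functions with characteristics on $C$: for a curve $C\in\M_{g+1}$ with semiperiod $\mu$, the Prym variety $P(C,\mu)$ has period matrix obtained from that of $C$ by a standard recipe, and the value $\beta(P(C,\mu),\nu)$ can be written (via Schottky--Jung type ``Prym-theta'' relations, e.g. as in \cite{MR898055}) as a specific combination of the thetanulls $\theta\!\left[\begin{array}{c}\epsilon\\\delta\end{array}\right]$ of $C$ evaluated at characteristics that depend on $\mu$ and $\nu$. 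The point is that, after unwinding definitions, $\beta(P_i,\nu_i)$ becomes a function of the unordered data ``the isotropic subgroup $\{0,\mu_0,\mu_1,\mu_2\}$ together with the curve $C$'', and the apparent dependence on the choice of $i$ (i.e., on which element we single out as ``$\mu$'' and which as ``$\nu$'') is only apparent.

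The key steps, in order, are: (1) write down explicitly the Prym period matrix construction for $(C,\mu_i)$ in a symplectic basis adapted to the flag $0\subset\langle\mu_i\rangle\subset\langle\mu_0,\mu_1,\mu_2\rangle$, so that the three cases $i=0,1,2$ differ only by a permutation of basis vectors; (2) translate $\beta(P_i,\nu_i)_\epsilon$ into a sum over $\ZZ^{g+1}$ (or a coset thereof) of exponentials in the period matrix of $C$, keeping careful track of the half-integer shifts coming from $\mu_i$ and $\nu_i$; (3) identify the permutation of the three semiperiods with conjugation by an explicit matrix $\gamma\in\Gamma_{g+1}$ which fixes $C$'s period point only up to the $\Gamma_{g+1}(2,4)$-ambiguity, hence acts trivially on the relevant thetanulls up to scalar — this is exactly the ``theta symmetry'' mechanism; (4) check that the scalar is absorbed in passing to $\PP(U_{g-1})$, so that the three points coincide as points of $\PP(U_{g-1})$, and therefore descend to the same point of $\PP_{g-1}$.

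The main obstacle I expect is step (3): making the bookkeeping of characteristics precise enough that the symmetry between $\mu_0,\mu_1,\mu_2$ is visibly realized by a single symplectic transformation, rather than merely numerically plausible. The isotropy hypothesis ($\langle\mu_0,\mu_1,\mu_2\rangle$ is a rank-$2$ isotropic subgroup, so all pairwise Weil pairings vanish) is precisely what is needed for the relevant quadratic form in the theta exponentials to be invariant under the permutation, and for the ``orthogonality'' condition \eqref{weilpairing} ensuring $(P_i,\nu_i)$ lies in $\RRM$/$\RA$ to hold simultaneously for all $i$; I would need to verify that the transformation lands in $\Gamma_{g+1}(2,4)$ modulo the subgroup acting trivially, so that no genuine change of the theta values occurs. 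Once the transformation is pinned down, steps (1), (2) and (4) are routine manipulations with the transformation formula for theta functions with characteristics. An alternative, more geometric route — exhibiting the common point as $\alpha_g$ or $\beta_{g+1}$ of an intrinsically defined object attached to $(C,\langle\mu_0,\mu_1,\mu_2\rangle)$ — would avoid some computation but still ultimately rests on the same symmetry, so I would present the computational proof as the primary argument.
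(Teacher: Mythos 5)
This theorem is not proved in the paper at all: it is quoted verbatim from Donagi \cite[Theorem 3.1]{MR898055}, so there is no in-text argument to compare yours against. Judged on its own, your outline does point at the standard route (the one Donagi uses): express $\beta(P_i,\nu_i)$ through a Schottky--Jung--type proportionality in terms of thetanulls of $\J(C)$ and observe that the resulting expression depends only on the subgroup $\{0,\mu_0,\mu_1,\mu_2\}$. But as written it is a program rather than a proof, and the one ingredient that carries all the content is missing: you never state, let alone establish, the refined (``two-semiperiod'') Schottky--Jung identity that is actually needed here. The identity of Theorem \ref{SJId} only computes $\alpha_g(P_i)$, i.e.\ the second-order thetanulls of the Prym with no marked semiperiod; to compute $\beta(P_i,\nu_i)$ one must track how $\nu_i$ sits inside $P_i[2]$ via the Mumford sequence and prove a proportionality of the shape $\theta\bigl[\begin{smallmatrix}\epsilon&0\\0&1/2\end{smallmatrix}\bigr](2\Pi_i,0)\propto\theta\bigl[\begin{smallmatrix}\epsilon&0&0\\0&1/2&1/2\end{smallmatrix}\bigr](2\Omega,0)$ in a symplectic basis adapted to $\langle\mu_0,\mu_1\rangle$. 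That identity, whose right-hand side is manifestly symmetric in the two distinguished half-periods, is the theorem; deferring it to ``routine manipulations'' leaves the proof empty.

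Two further points in your step (3) need repair. First, the mechanism is miscast: a symplectic element $\gamma$ permuting $\mu_0,\mu_1,\mu_2$ does \emph{not} fix the period point of $C$ up to $\Gamma_{g+1}(2,4)$ (it is a genuinely nontrivial element of $G_{g+1}$ acting on the level structure); what one actually invokes is the theta transformation formula together with the fact that the right-hand side above depends only on the unordered subgroup, and the residual scalars/eighth roots of unity are absorbed by the quotient $\PP(U_{g-1})\to\PP_{g-1}$. Second, there is a lift ambiguity you do not address: $\nu_i\in P_i[2]$ has two lifts to $\mu_i^\perp$, namely $\mu_j$ and $\mu_k=\mu_j+\mu_i$, and the adapted basis (hence the characteristic appearing on the right-hand side) depends on which lift you choose; one must check the formula is independent of that choice before the symmetry among $i=0,1,2$ (as opposed to just a transposition of two indices) can be concluded. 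Both issues are handled in Donagi's computation, but neither is visible in your sketch.
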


We will also use the description of the fibers of $\beta_4$ that appears in the survey \cite{MR963063}

\begin{proposition}[{\cite[Theorem 5.3]{MR963063}}]
\label{beta4fiber}
{\ \\}
For $C\in\M_3$, the fiber $\beta_4^{-1}(\alpha_3(\J(C)))$ consists of two copies of the Kummer $K(\J(C))$, one contained in the interior of $\RM_4$ and the other the fiber over $\J(C)\in \A_3=\partial\As_4$ in the projection $\At_4\to\As_4$.
\end{proposition}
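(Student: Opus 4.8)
The plan is to reduce the computation to the Prym map $\P_4$ via the Schottky--Jung proportionality and then to describe one fibre of $\P_4$ together with the boundary behaviour of $\beta_4$. For a curve $C'\in\M_4$ and a nonzero semiperiod $\mu$, the second-order Schottky--Jung identities give $\beta_4(C',\mu)=\alpha_3(\P_4(C',\mu))$, so that $\beta_4|_{\RM_4}=\alpha_3\circ\P_4$. The second-order thetanulls separate principally polarized abelian varieties of dimension three, so $\alpha_3$ is injective on $\A_3$; it follows that $\beta_4^{-1}(\alpha_3(\J(C)))\cap\RM_4=\P_4^{-1}(\J(C))$.

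Next I would identify $\P_4^{-1}(\J(C))$, for $C$ general, with the Kummer $K(\J(C))$. Every $L\in\operatorname{Pic}^4(C)$ other than $K_C$ has $h^0(L)=2$ and is base-point-free off a divisor, so the tetragonal pencils $g^1_4$ on $C$ are parametrized birationally by $\operatorname{Pic}^4(C)$; by the trigonal construction (Theorem \ref{Trigconst}) each such pencil produces a trigonal pair $(T,\tilde T)\in\RM_4$ with $\P_4(T,\tilde T)\cong\J(C)$, and the reciprocity of that construction makes the assignment invariant under the Serre-dual involution $L\mapsto K_C^{\otimes 2}\otimes L^{-1}$, hence it factors through $\operatorname{Pic}^4(C)/(L\sim K_C^{\otimes 2}\otimes L^{-1})\cong K(\J(C))$. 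One then checks, using the tetragonal construction and the properness of the allowable Prym map (Theorem \ref{PrymisProper}), that the trigonal locus is dense in the irreducible $3$-dimensional fibre $\P_4^{-1}(\J(C))$, so that this map is an isomorphism $K(\J(C))\xrightarrow{\ \sim\ }\P_4^{-1}(\J(C))$; this gives the copy of the Kummer contained in $\RM_4\subset\RA_4$.

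For the second copy I would extend $\beta_4$ over the boundary of $\RMbar_4$ and go through the components with Proposition \ref{BoundaryPrym}. On $\partial^I$ the cover is Wirtinger and $\P_4=\J_3(X)$ depends only on the smooth genus-three curve $X$, so $\beta_4=\alpha_3\circ\J_3$ there; by Torelli and injectivity of $\alpha_3$ the preimage of $\alpha_3(\J(C))$ is the whole fibre of $\partial^I\RAt_4\cong\X_3\to\A_3$ over $\J(C)$, namely $K(\J(C))$, and under the forgetful map $\RAt_4\to\At_4$ this is exactly the fibre of $\At_4\to\As_4$ over $\J(C)\in\partial\As_4=\A_3$, as in the statement. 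On $\partial^{II}$ the limit is a nontrivial $\CC^\times$-extension, so $\beta_4$ takes a value in the boundary of the image of $\alpha_3$, distinct from the interior point $\alpha_3(\J(C))$; the reducible components $\partial_i$, $\partial_{i,g-i}$ give either products $E\times A$, which cannot equal the irreducible abelian variety $\J(C)$, or again boundary values; and on $\partial^{III}$ the Beauville cover has vanishing cycle non-orthogonal to $\mu$, which makes the corresponding degeneration of $\theta\left[\begin{array}{cc}\epsilon&0\\0&1/2\end{array}\right](2\Omega,0)$ itself degenerate, and one verifies directly that no third $3$-dimensional component appears. This leaves precisely the two copies of $K(\J(C))$.

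The step I expect to be the main obstacle is the exhaustiveness: excluding points $(A,\mu)\in\RA_4$ with $A$ not a Jacobian from $\beta_4^{-1}(\alpha_3(\J(C)))$ and handling $\partial^{III}$, since $\partial^I\RAt_4$ and $\partial^{III}\RAt_4$ are both copies of $\X_3$ and only a careful analysis of the degeneration of the second-order theta constants --- or an appeal to the solution of the genus-four Schottky problem, which identifies $\beta_4^{-1}(\alpha_3(\A_3))$ with $\RMbar_4$ together with the Wirtinger boundary --- separates them. A secondary difficulty is identifying the interior Prym fibre with the full Kummer rather than a dense open subset or a finite quotient, which rests on the irreducibility of $\P_4^{-1}(\J(C))$ for general $C$ and on the reciprocity of the trigonal construction.
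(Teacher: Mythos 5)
The paper offers no proof of this proposition --- it is imported directly from van Geemen's survey \cite{MR963063} --- so there is no internal argument to measure you against. Your reconstruction follows what is essentially the route of that reference (Schottky--Jung plus the trigonal construction for the interior component, degeneration of the second-order theta constants on the boundary), and the overall shape is right. Two steps, however, are not merely ``obstacles you expect'' but places where the argument as written does not close.

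First, the identification $\beta_4^{-1}(\alpha_3(\J(C)))\cap\RM_4=\P_4^{-1}(\J(C))$ quietly replaces $\RA_4$ by $\RM_4$. The Schottky--Jung identity and injectivity of $\alpha_3$ only give the inclusion $\RJ_4\bigl(\P_4^{-1}(\J(C))\bigr)\subseteq\beta_4^{-1}(\alpha_3(\J(C)))\cap\RA_4$; equality requires knowing that every \emph{interior} point of the fiber is a Jacobian, which is exactly the genus-four Schottky theorem of Igusa--Freitag. So the appeal you relegate to a parenthetical at the end is the load-bearing input for the interior component, not a fallback, and should be the first step of the proof.

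Second, your dismissal of the decomposable loci uses the wrong criterion. The question is not whether a product can equal the indecomposable $\J(C)$ as a Prym, but whether $\beta_4$ of a decomposable pair can equal $\alpha_3(\J(C))$. It can: an elliptic factor carrying the semiperiod contributes only an overall scalar to the theta constants, so $\beta_4(A'\times(E,\mu))=\alpha_3(A')$ projectively, and the one-dimensional locus $\{\J(C)\}\times\RA_1$ lies entirely in the fiber. This is the same contraction phenomenon the paper exploits for $\RA_1\times\A_4$ under $\beta_5$ in Proposition \ref{prop:contract}. It does not break the statement --- these points are limits of Jacobians of trigonal curves degenerating to $C\cup_p E$, i.e.\ they come from the pencils with base points in your $\operatorname{Pic}^4(C)$ parametrization, hence sit in the closure of the interior Kummer --- but your argument does not see them, and the same care is needed to verify that no other stratum contributes points outside the two Kummers. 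The remaining assertions (reciprocity of the trigonal construction under $L\mapsto K_C^{\otimes2}\otimes L^{-1}$, injectivity of $\alpha_3$ at $\J(C)$) are standard but stated without proof and implicitly require $C$ general.
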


\subsection{The Schottky Loci}

We begin with the theorem that motivates this study:

\begin{theorem}[Schottky-Jung Identities \cite{S,SJ,MR0352108}]
\label{SJId}
{\ \\}
The following diagram commutes:

\[
\begin{xy}
(15,30)*+{\RM_g}="a";
(15,0)*+{\PP_g}="b";
(0,15)*+{\A_{g-1}}="c";
(30,15)*+{\RA_g}="d";
{\ar_{\P_g} "a";"c"};
{\ar^{\RJ_g} "a";"d"};
{\ar_{\alpha_{g-1}} "c";"b"};
{\ar^{\beta_g} "d";"b"};
\end{xy}
\]
\end{theorem}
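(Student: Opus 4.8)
The plan is to reduce the assertion to a single identity between the theta-constant vectors defining $\alpha_{g-1}$ and $\beta_g$, and then to extract that identity from the theta function of the connected \'etale double cover attached to a point of $\RM_g$. Commutativity of the square means precisely that for every $(C,\mu)\in\RM_g$ one has $\alpha_{g-1}(\P_g(C,\mu))=\beta_g(\RJ_g(C,\mu))$ in $\PP_{g-1}$. Since the components of $\alpha_{g-1}$ and $\beta_g$ are weight $(2,4)$ modular forms, it suffices to work upstairs on the level covers $\RM_g(2,4)$, $\A_{g-1}(2,4)$, $\RA_g(2,4)$, to prove the corresponding equality of vectors up to a common nonzero scalar for suitably normalized period matrices, and then to check $G_{g-1}$-equivariance so that the identity descends. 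Concretely, fixing a period matrix $\tau$ of $\J(C)$ in a symplectic basis in which $\mu$ is the half-period $v=(0,\dots,0,1/2)$, and a period matrix $\sigma$ of $\P(C,\mu)$ realizing its principal polarization, the goal becomes
\[
\theta\!\left[\begin{array}{cc}\epsilon&0\\0&1/2\end{array}\right](2\tau,0)\;=\;\kappa\cdot\theta\!\left[\begin{array}{c}\epsilon\\0\end{array}\right](2\sigma,0)\qquad\text{for all }\epsilon\in(\tfrac12\ZZ/\ZZ)^{g-1},
\]
with $\kappa$ independent of $\epsilon$.

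The geometric input is the Jacobian of the double cover. Let $\pi\colon\tilde C\to C$ be the connected \'etale double cover determined by $\mu$, with involution $\iota$, so that $\tilde C$ has genus $2g-1$, $\ker(\pi^*\colon\J(C)\to\J(\tilde C))=\{0,\mu\}$, and $\J(\tilde C)$ is isogenous to $\pi^*\J(C)\times\P(C,\mu)$. I would choose a symplectic basis of $H_1(\tilde C,\ZZ)$ adapted to $\iota$ in the manner of Mumford: $\iota_*$ acts on it by an explicit signed permutation, $\pi_*$ collapses it onto the chosen symplectic basis of $H_1(C,\ZZ)$ realizing $\mu=v$, and the $(-1)$-eigenlattice of $\iota_*$, after halving the restricted polarization, yields a symplectic basis realizing the principal polarization $\sigma$ on $\P$. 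In such a basis the period matrix $\tilde\tau$ of $\J(\tilde C)$ acquires a block form whose blocks are built from $\tau$ and $\sigma$; crucially, the relation $\pi^*\mu=0$ --- equivalently, the quasi-periodicity of $\theta_{\tilde C}$ along the vanishing cycle attached to $\mu$ --- forces the relevant off-diagonal block to be half-integral, and this half-period is exactly what produces the lower characteristic $(0,1/2)$ on the Jacobian side.

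With the block form in hand, I would apply the classical second-order theta addition theorem to $\theta_{\tilde C}(\tilde\tau,\cdot)$ and restrict its argument to each of the two subtori $\pi^*\J(C)$ and $\P$ of $\J(\tilde C)$. Using the block structure, each restriction yields a finite sum/pullback expression for the restricted theta in terms of theta functions of $C$ (in the variable $2\tau$, twisted by the quadratic character attached to $\mu$) and of $\P$ (in the variable $2\sigma$), respectively. Evaluating both expressions at the origin together with the $2$-torsion shifts encoding $\epsilon$, and eliminating between them the theta constants of $\tilde C$ common to both, collapses the extra summation variable sitting over the last coordinate of $\J(C)$ against the $\mu$-twist and leaves precisely the displayed proportionality, with a scalar $\kappa$ visibly independent of $\epsilon$. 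Finally one checks descent: any two adapted bases differ by an element of $\Sp(2g,\ZZ)$ compatible with $\RGamma_g(2,4)$, and such a change multiplies both $\beta_g(\RJ_g(C,\mu))$ and $\alpha_{g-1}(\P(C,\mu))$ by the same element of $G_{g-1}$, so the equality is well defined on $\RA_g$ and $\A_{g-1}$ and the square commutes.

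I expect the main obstacle to be bookkeeping rather than anything conceptual: pinning down the adapted homology basis and the exact block form of $\tilde\tau$, correctly tracking all theta characteristics, handling the factor of two in the Prym polarization (which is what forces the variables $2\tau,2\sigma$ and the weight $(2,4)$ normalization rather than naive theta-nulls), and confirming that the eliminated $\tilde C$-constants really do cancel to leave a single $\epsilon$-independent scalar that transforms correctly under $G_{g-1}$. These are exactly the normalizations that the definitions of $\alpha_g$ and $\beta_g$ were arranged to absorb.
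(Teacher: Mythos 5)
The paper does not actually prove this theorem---it quotes it from Schottky, Schottky--Jung, and Farkas--Rauch---so there is no internal argument to compare against; your outline is precisely the classical derivation found in those references (and in the appendix to Mumford's \emph{Prym varieties I}): normalize $\mu$ to the half-period $v$, put the period matrix of the genus-$(2g-1)$ \'etale double cover $\tilde{C}$ into block form adapted to the involution, restrict $\theta_{\tilde{C}}$ to the two subtori $\pi^*\J(C)$ and $\P(C,\mu)$, on each of which the induced polarization is twice a principal one, compare the resulting second-order theta expansions to obtain the $\epsilon$-independent proportionality, and then descend by $G_{g-1}$-equivariance. The strategy is correct and is essentially the one in the cited sources; the only substance you defer is the explicit block-period-matrix and addition-theorem computation, which is indeed the bookkeeping you identify.
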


Schottky and Jung noticed these relations between various theta functions and suggested that they can be used to describe Jacobians.  More precisely, we define 
\begin{eqnarray*}
\RS_g&=&\beta^{-1}_g(\im \alpha_{g-1})\\
\S_g^{\bg}&=&\{A|\exists \mu, (A,\mu)\in \RS_g\}\\
\S_g^{\smll}&=&\{A|\forall \mu, (A,\mu)\in \RS_g\}
\end{eqnarray*}
Schottky and Jung conjectured that $\S_g^{\bg}=\J_g$, the closure of the image of the Torelli map.  This is known not to be true, and the key result for that is Theta symmetry (Theorem \ref{thetasym})

Let $Q\subset \PP^2$ be a plane quintic curve and let $\mu,\nu$ be two points of order two on $\J(Q)$ such that $\mu$ is odd and $\nu$ is even.  Then $(\P(Q,\mu),\bar{\nu})\in\RS_5$ if and only if $(\P(Q,\nu),\bar{\mu})\in\RS_5$.  However, as mentioned above and proved in \cite{MR594627}, $\P(Q,\mu)$ is the intermediate Jacobian of a cubic threefold.  Additionally, $\P(C,\nu)$ is known to be a Jacobian of a curve (see the discussion in section 5 of \cite{MR898055}). Thus, $\C\subset\S_5^{\bg}$, and so Donagi introduced $\S_g^{\smll}$ to correct this, as only $\RC^0$ appears in $\RS_5$, not $\RC^1$, the locus of intermediate Jacobians of cubic threefolds with an odd point of order two.

There are two additional components that are not difficult to show lie in $\RS_5$.  There is the locus $\RA_1\times\A_4$, and in fact:

\begin{proposition}[{\cite[3.3.4]{MR903385}}]
{\ \\}
The Schottky locus $\RS_g$ contains $\A_{g'}\times \RA_{g''}$ for all $g'+g''=g$ (Note that if $g=0$, then $\RS_0=\emptyset$).
\end{proposition}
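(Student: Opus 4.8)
The plan is to pull the statement back to the Siegel upper half space and exploit that theta functions of a block-diagonal period matrix factor as products. A point of $\A_{g'}\times\RA_{g''}$ is a triple $(A'\times A'',(0,\mu''))$ with $\mu''\in A''[2]$ nonzero. Choosing a symplectic basis adapted to the product decomposition and, inside the $A''$-summand, carrying $\mu''$ to the standard vector $v=(0,\dots,0,\frac12)$, such a point lifts to a block-diagonal $\Omega=\Omega'\oplus\Omega''\in\HH_{g'}\times\HH_{g''}\subseteq\HH_g$ with the distinguished semiperiod in standard form and supported on the $\Omega''$-block. So it is enough to prove $\beta_g(\Omega'\oplus\Omega'')\in\im\alpha_{g-1}$ for all such $\Omega',\Omega''$.

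The key computation is the product formula $\thetachar{a}{b}{M'\oplus M''}{0}=\thetachar{a'}{b'}{M'}{0}\cdot\thetachar{a''}{b''}{M''}{0}$, where $a=(a',a'')$, $b=(b',b'')$ are split according to the sizes of $M'$ and $M''$. Feeding in $M'=2\Omega'$, $M''=2\Omega''$ and the characteristic defining $\beta_g$ — writing $\epsilon=(\epsilon',\epsilon'')$ with $\epsilon'\in(\frac12\ZZ/\ZZ)^{g'}$, $\epsilon''\in(\frac12\ZZ/\ZZ)^{g''-1}$, so that the trailing $0$ of the top row and the $\frac12$ of the bottom row both land in the $\Omega''$-block — yields $\beta_g(\Omega'\oplus\Omega'')_{(\epsilon',\epsilon'')}=\theta_2[\epsilon'](\Omega',0)\cdot\beta_{g''}(\Omega'')_{\epsilon''}=\alpha_{g'}(\Omega')_{\epsilon'}\,\beta_{g''}(\Omega'')_{\epsilon''}$. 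In other words, under the identification $U_{g-1}=U_{g'}\otimes U_{g''-1}$ induced by $(\ZZ/2\ZZ)^{g-1}=(\ZZ/2\ZZ)^{g'}\times(\ZZ/2\ZZ)^{g''-1}$, the restriction of $\beta_g$ to $\A_{g'}\times\RA_{g''}$ is the Segre composite $\alpha_{g'}\otimes\beta_{g''}$; the same formula on the decomposable locus $\A_{g'}\times\A_{g''-1}\subseteq\A_{g-1}$ gives $\alpha_{g-1}(\Omega'\oplus\Xi'')=\alpha_{g'}(\Omega')\otimes\alpha_{g''-1}(\Xi'')$.

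Comparing the two formulas, $\beta_g(\Omega'\oplus\Omega'')$ equals $\alpha_{g-1}(\Omega'\oplus\Xi'')$ the moment $\beta_{g''}(\Omega'')=\alpha_{g''-1}(\Xi'')$ for some $\Xi''$, i.e. the moment $(A'',\mu'')\in\RS_{g''}$. For $g''=1$ this is automatic, since $\im\alpha_0=\PP(U_0)$ is a point and $g-1=g'$; this already gives $\A_{g-1}\times\RA_1\subseteq\RS_g$, in particular $\RA_1\times\A_4\subseteq\RS_5$, the instance used below. For $g''\geq 2$ the remaining input is that $\beta_{g''}$ already carries $\RA_{g''}$ into $\im\alpha_{g''-1}$ along this locus; here I would extend the Schottky–Jung relation (Theorem \ref{SJId}) to the boundary component $\partial_{g''}$ of $\RMbar_g$, where by Proposition \ref{BoundaryPrym} the Prym map $\P_g$ factors through $\A_{g''-1}\times\A_{g'}\subseteq\A_{g-1}$, so that $\beta_g=\alpha_{g-1}\circ\P_g$ lands in $\im\alpha_{g-1}$ along products of Jacobians, and then pass to the closure in $\RA_g$ (using that the Schottky locus, or at least its closure, is closed). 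The step I expect to be the real work is checking that this closure actually exhausts $\A_{g'}\times\RA_{g''}$ and not merely the sublocus where the $g''$-factor is a Jacobian (or already lies in $\RS_{g''}$) — equivalently, that a decomposable tensor $\alpha_{g'}(\Omega')\otimes\beta_{g''}(\Omega'')$ always lies on $\im\alpha_{g-1}$, which for $g''\geq 4$ is a genuinely nontrivial assertion about the image of the second-order theta map.
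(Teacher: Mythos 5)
Your block-diagonal factorization of the theta constants is correct, and it is exactly the mechanism behind the cited result (the paper itself gives no proof, only the reference to Donagi): writing $\beta_g(\Omega'\oplus\Omega'')=\alpha_{g'}(\Omega')\otimes\beta_{g''}(\Omega'')$ and $\alpha_{g-1}(\Omega'\oplus\Xi'')=\alpha_{g'}(\Omega')\otimes\alpha_{g''-1}(\Xi'')$ under $U_{g-1}=U_{g'}\otimes U_{g''-1}$ immediately yields $\A_{g'}\times\RS_{g''}\subseteq\RS_g$.

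The step you flag at the end as ``the real work'' --- that $\alpha_{g'}(\Omega')\otimes\beta_{g''}(\Omega'')$ lies on $\im\alpha_{g-1}$ for \emph{every} $(A'',\mu'')\in\RA_{g''}$ --- is not merely nontrivial; it is false once $g''\geq 4$, so no argument will close that gap. Concretely, take $(g',g'')=(1,4)$. If $\A_1\times\RA_4$ were contained in $\RS_5$, then degenerating the elliptic factor would land surjectively onto $\partial^{II}\RAs_5\cong\RA_4$ (the vanishing cycle sits in the elliptic factor, hence is distinct from and orthogonal to $\mu''$), whereas Proposition \ref{Schottkyboundary} says $\partial\overline{\RS}_5^s$ meets $\partial^{II}$ only in $i_{II}(\RS_4)$; since $\RS_4\subsetneq\RA_4$ (by Igusa, $\S_4=\J_4$ is a proper divisor), this is a contradiction. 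Indeed the paper itself asserts, in the proof of Proposition \ref{prop:contract}, that $\RA_4\times\A_1$ is \emph{not} in the Schottky locus --- and that is the same locus as $\A_1\times\RA_4$ up to reordering the factors. The resolution is that the statement is misquoted: Donagi's 3.3.4 asserts $\RS_g\supseteq\A_{g'}\times\RS_{g''}$, which is also what the parenthetical convention $\RS_0=\emptyset$ is there for. Your computation is a complete proof of that corrected statement, and the only instance used later in the paper is $g''=1$, where $\RS_1=\RA_1$ because $\im\alpha_0=\PP(U_0)$ is a single point, so nothing downstream is affected. Your closing suggestion of extending the Schottky--Jung identity over $\partial_{g''}$ and taking closures would likewise only recover $\overline{\J_{g'}\times\RJ_{g''}}$, i.e.\ a sublocus of $\A_{g'}\times\RS_{g''}$, for the same reason.
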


We will see below that no other product loci can be components of $\RS_5$.

There is also one boundary component, described by 

\begin{proposition}[{\cite[Theorem 3.3.1]{MR903385}}]
\label{Schottkyboundary}
{\ \\}
In the Satake compactification, we have \[\partial\overline{\RS}_g^s=\partial^I\RAs_g\cup \partial^{III}\RAs_g\cup i_{II}(\RS_{g-1})\] where $i_{II}$ is the inclusion of $\RA_{g-1}$ as $\partial^{II}\RAs_g$.
\end{proposition}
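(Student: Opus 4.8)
The plan is to pass to the level of the Satake compactification, where $\beta_g$ extends to a morphism $\RAs_g\to\PP_{g-1}$ (its defining theta constants are modular forms), and likewise $\alpha_{g-1}$ extends to $\As_{g-1}\to\PP_{g-1}$, so that $\overline{\im\alpha_{g-1}}$ is a genuine projective subvariety of $\PP_{g-1}$. Then $\beta_g^{-1}(\overline{\im\alpha_{g-1}})$ is a closed subvariety of $\RAs_g$ whose trace on the interior $\RA_g$ is exactly $\RS_g$, and determining $\partial\overline{\RS}_g^s$ reduces to two things: (i) identifying the restriction of $\beta_g$ to each of the three corank-one boundary components $\partial^I\RAs_g$, $\partial^{II}\RAs_g$, $\partial^{III}\RAs_g$; and (ii) checking that $\partial^I$ and $\partial^{III}$ really lie in the closure of the interior locus $\RS_g$ and are not extraneous components of $\beta_g^{-1}(\overline{\im\alpha_{g-1}})$.

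The heart is step (i). I would apply the Siegel $\Phi$-operator to the theta constants defining $\beta_g$, i.e.\ compute their limits as $\Omega\in\HH_g$ degenerates along the corank-one boundary directions. Which limit one gets depends on the position, taken mod $2$, of the vanishing cycle $\delta$ relative to the marked semiperiod $\mu$ (the one attached to $v=(0,\ldots,0,1/2)$); this is precisely the trichotomy $\delta=\mu$, $\delta\neq\mu$ orthogonal, and $\delta,\mu$ not orthogonal that defines $\partial^I,\partial^{II},\partial^{III}$. A bookkeeping with the theta transformation formula should give: under the identifications $\partial^I\RAs_g\cong\A_{g-1}$ and $\partial^{III}\RAs_g\cong\A_{g-1}$ the map $\beta_g$ restricts to $\alpha_{g-1}$, while under $\partial^{II}\RAs_g\cong\RA_{g-1}$ it restricts to $\beta_{g-1}$ composed with the natural linear embedding $\PP_{g-2}\hookrightarrow\PP_{g-1}$ of theta spaces. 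These restrictions can be cross-checked geometrically from the Schottky--Jung diagram (Theorem \ref{SJId}) together with the boundary behaviour of the Prym map (Proposition \ref{BoundaryPrym}): a Wirtinger cover has Prym $\J_{g-1}(X)$, so $\beta_g=\alpha_{g-1}\circ\P_g$ restricts to $\alpha_{g-1}$ along $\partial^I$; a Beauville cover has Prym a genuine $(g-1)$-dimensional principally polarized abelian variety, again landing in $\im\alpha_{g-1}$ along $\partial^{III}$; and an unallowable cover has Prym the $\CC^\times$-extension of $\P_{g-1}(X,\tilde{X})$, whose $\alpha_{g-1}$-image equals $\alpha_{g-2}(\P_{g-1}(X,\tilde{X}))=\beta_{g-1}(X,\tilde{X})$ by Schottky--Jung in genus $g-1$.

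Granting (i), one assembles $\partial\overline{\RS}_g^s=\beta_g^{-1}(\overline{\im\alpha_{g-1}})\cap\partial\RAs_g$ stratum by stratum. Over $\partial^I$ and $\partial^{III}$ the restriction of $\beta_g$ lands entirely in $\im\alpha_{g-1}\subseteq\overline{\im\alpha_{g-1}}$, so the preimage is the whole component in both cases. Over $\partial^{II}\cong\RA_{g-1}$ one gets $\{(Y,\nu)\in\RA_{g-1}:\beta_{g-1}(Y,\nu)\in\overline{\im\alpha_{g-1}}\cap\PP_{g-2}\}$; since $\overline{\im\alpha_{g-1}}$ meets that coordinate subspace in $\overline{\im\alpha_{g-2}}$ (the restriction of $\alpha_{g-1}$ to $\partial\As_{g-1}=\A_{g-2}$ being $\alpha_{g-2}$), and since the $\beta_{g-1}$-image of an interior point cannot lie in the deeper stratum $\overline{\im\alpha_{g-2}}\setminus\im\alpha_{g-2}$, this locus is exactly $\beta_{g-1}^{-1}(\im\alpha_{g-2})=\RS_{g-1}$, embedded as $i_{II}(\RS_{g-1})$.

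Finally, for step (ii): to see $\partial^I\RAs_g$ and $\partial^{III}\RAs_g$ genuinely sit in the closure of the interior Schottky locus, I would degenerate Jacobians carrying a semiperiod along one-parameter families in $\RMbar_g$ hitting general points of $\partial^I$ (Wirtinger covers) and $\partial^{III}$ (Beauville covers). The interior points $\RJ_g(C,\mu)$ lie in $\RS_g$ by Theorem \ref{SJId}; their limits fill a dense subset of each boundary component, using Proposition \ref{BoundaryPrym} and the surjectivity of the Prym map in the relevant range (Theorem \ref{PrymisProper}); taking closures yields $\partial^I\RAs_g,\partial^{III}\RAs_g\subseteq\overline{\RS}_g^s$, and combined with step (iii)'s computation over $\partial^{II}$ this gives the stated equality. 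The main obstacle is step (i): getting the theta-constant degeneration exactly right, in particular matching each case of the theta transformation formula to the correct boundary component and keeping careful track of the linear embeddings of theta spaces (and the multiplication-by-two phenomenon for $\RAt_g\to\RAs_g$ along $\partial^{III}$); everything downstream of that identification is formal.
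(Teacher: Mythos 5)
The paper does not prove this proposition; it imports it verbatim from Donagi's \cite{MR903385}, so there is no in-paper argument to compare against. Your overall strategy --- extend $\beta_g$ and $\alpha_{g-1}$ to the Satake compactification, identify the restriction of $\beta_g$ to each of $\partial^I$, $\partial^{II}$, $\partial^{III}$ by degenerating the defining theta constants according to the position of the vanishing cycle relative to the marked semiperiod, and then intersect with $\overline{\im\alpha_{g-1}}$ stratum by stratum --- is the route taken in the cited source; in particular your identification of $\beta_g|_{\partial^{II}}$ with $\beta_{g-1}$ landing in the coordinate subspace $\PP(U_{g-2})$ is exactly the content of Lemma \ref{Lemma:dIIbeta}, which this paper also quotes from the same reference. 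So the skeleton is right, and you correctly locate the real work in the theta-constant bookkeeping of step (i).

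There is, however, a genuine gap in your step (ii). If $\partial\overline{\RS}_g^s$ is read as the boundary of the Satake closure of the interior locus $\RS_g$, you must exhibit \emph{every} point of $\partial^I\RAs_g\cong\A_{g-1}$ and of $\partial^{III}\RAs_g\cong\A_{g-1}$ as a limit of points of $\RS_g$. Degenerating Jacobians with semiperiods, as you propose, only produces limits lying over $\overline{\J}_{g-1}$ (Wirtinger degenerations, for $\partial^I$) and over the locus of Beauville Pryms (for $\partial^{III}$); neither is dense in $\A_{g-1}$ once $g-1\geq 4$, and the Prym surjectivity you invoke holds only for $g\leq 6$, whereas the proposition is stated for all $g$. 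The fix is to feed in a larger known piece of the Schottky locus: $\A_{g-1}\times\RA_1\subset\RS_g$, and letting the elliptic factor degenerate while its marked semiperiod is chosen equal to (respectively, not orthogonal to) the vanishing cycle sweeps out all of $\partial^I\RAs_g$ (respectively $\partial^{III}\RAs_g$) over an arbitrary $X\in\A_{g-1}$. Two smaller points in your $\partial^{II}$ step also need justification rather than assertion: that no interior point of $\A_{g-1}$ is sent by $\alpha_{g-1}$ into the coordinate subspace $\PP(U_{g-2})$ (i.e.\ that half of the second-order theta constants of a principally polarized abelian variety never vanish simultaneously), and that $\beta_{g-1}$ of an interior point of $\RA_{g-1}$ lying in $\overline{\im\alpha_{g-2}}$ actually lies in $\im\alpha_{g-2}$, so that the locus you obtain is $\RS_{g-1}$ itself and not $\beta_{g-1}^{-1}$ of the closure.
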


In \cite{MR963063} this result is strengthened to show that $\partial\overline{\RA}_g^t$ does not contain $\partial^{III}$ or $i_{II}(\RS_{g-1})$ as components, but only contains the points that are limits of other components.

\section{Line Configurations}

In this section, we will study a class of configurations that occur whenever there is a triality on a space, such as the tetragonal construction or theta symmetry.  Because we are working with trialities, everything in this section will be done over $\FF_2$, but the majority will work for $\FF_q$.

\begin{define}[Line Configuration]
{\ \\}
A line configuration $V$ over $\FF_2$ is a set $P_V$, called the points of $V$, along with a set $L_V$, called the lines of $V$, such that for all $\ell\in L_V$, we have $\ell\subset P_V$, $|\ell|=3$ and for all $\ell,\ell'\in L_V$, $|\ell\cap\ell'|\geq 2$ implies that $\ell=\ell'$.
\end{define}

We can construct a large class of examples, which we will call \textit{algebraic line configurations}.  These are defined by starting with $V\subset\PP^n$ over $\FF_2$ a projective variety.  Then we set $P_V$ to be the $\FF_2$-points of $V$ and $L_V$ to be the projective lines over $\FF_2$ contained in $V$.  The first interesting example is $V=\PP^2$, the Fano plane:

\[
\begin{xy}
(10,0)*+{\bullet}="a1";
(20,0)*+{\bullet}="a2";
(30,0)*+{\bullet}="a3";
(20,5)*+{\bullet}="b1";
(15,7)*+{\bullet}="c1";
(25,7)*+{\bullet}="c2";
(20,15)*+{\bullet}="d1";
{\ar@{-} "a1";"a2"};
{\ar@{-} "a2";"a3"};
{\ar@{-} "a1";"c1"};
{\ar@{-} "c1";"d1"};
{\ar@{-} "a3";"c2"};
{\ar@{-} "c2";"d1"};
{\ar@{-} "a1";"b1"};
{\ar@{-} "b1";"c2"};
{\ar@{-} "a3";"b1"};
{\ar@{-} "b1";"c1"};
{\ar@{-} "a2";"b1"};
{\ar@{-} "b1";"d1"};
{\ar@/^.5pc/@{-} "a2";"c1"};
{\ar@/^.5pc/@{-} "c1";"c2"};
{\ar@/^.5pc/@{-} "c2";"a2"};
\end{xy}
\]

we define a morphism of configurations $f\colon V\to W$ to be an injection $f\colon P_V\to P_W$ such that for each line $\{p_0,p_1,p_\infty\}\in L_V$, the image $\{f(p_0), f(p_1), f(p_\infty)\}$ is a line in $L_W$.  A \textit{subconfiguration} is then the image of a morphism.  We say that two points are collinear if they lie on a subconfiguration isomorphic to $\PP^1$ and that two lines are coplanar if they lie on a subconfiguration isomorphic to $\PP^2$.

\begin{define}[$V$-Configuration]
{\ \\}
Let $V$ be a line configuration.  Then another line configuration $W$ is a \\ {$V$-configuration} if for each $p\in P_W$, there exists a bijection $\phi_p\colon \{\ell\in L_W|p\in \ell\}\to P_V$ such that $\ell,\ell'$ are coplanar if and only if $\phi_p(\ell)$ and $\phi_p(\ell')$ are collinear.
\end{define}

\begin{example}
{\ \\}
\begin{enumerate}
 \item $\PP^n$ is a $\PP^{n-1}$-configuration.
 \item If $V$ is a collection of $n$ points with $L_V=\emptyset$, then $(\PP^1)^n$ is a $V$-configuration.
 \item For some $V$, we can construct examples of multiple fundamentally distinct $V$-configurations.  For instance, if $V$ consists of five points and no lines, the previous example says that $(\PP^1)^5$ is a $V$-configuration.  However, if $S$ is a smooth cubic surface, then we can define a configuration $W$ with $P_W=\{\ell\subset S|\ell\mbox{ is a line}\}$ and with $L_W$ the set of triples of coplanar lines.  It is classical that given a line on a cubic surface, it is contained in exactly five coplanar triples, no set of which are configured as a Fano plane.
\end{enumerate}
\end{example}

Now, to help us to describe the properties of line configurations, we use the \textit{incidence graph}.  This is the graph $\Gamma_V$ whose vertices are the points of $P_V$ and two vertices $p,q\in P_V$ are connected by an edge if and only if they are collinear.  We fix a metric on $\Gamma_V$ such that each edge has length 1, and give properties to $V$ from the properties of the metrized graph $\Gamma_V$, for instance, we can speak of connected configurations or the diameter of a configuration.

Given $\Gamma_V$, we can define numerical invariants of a configuration.  For each $p\in P_V$, we define $V_i(p)=\{q\in V|d(p,q)=i\}$, the points that are first reached after passing along $i$ lines from $p$. For points $p\in P_V$, and $q\in V_i(p)$, we define $V_{i,j}(p,q)=V_j(p)\cap V_1(q)$, the points distance $j$ from $p$ which are adjacent to $q$.  Note that $V_{i,j}(p,q)$ is empty unless $j$ is $i-1$, $i$ or $i+1$.  We denote the cardinalities of these sets by $v_i(p)$ and $v_{i,j}(p,q)$ (by convention, we set these numbers to be zero if $i$ or $j$ is negative), and call a line configuration \textit{symmetric} if the $v_i(p)$ and $v_{i,j}(p,q)$ don't depend on $p$ and $q$, in which case we will denote $v_i=v_i(p)$ and $v_{i,j}=v_{i,j}(p,q)$.

\begin{proposition}
\label{propnumerics}
{\ \\}
Let $V$ be a connected symmetric line configuration.  Then
\begin{enumerate}
 \item For all $i$, $v_1=v_{i,i-1}+v_{i,i}+v_{i,i+1}$
 \item For all $i$, $v_1v_i=v_{i-1,i}v_{i-1}+v_{i,i}v_i+v_{i+1,i}v_{i+1}$
 \item We have $v_0=1$, $v_{0,0}=0$, $v_{0,1}=v_1$ and $v_{1,0}=1$.
\end{enumerate}

Let $W$ be a connected symmetric $V$-configuration.  Then

\begin{enumerate}
 \item[(4)] $w_{2,2}\geq w_{2,1}$
 \item[(5)] $w_1=2|P_V|$
 \item[(6)] $w_{1,1}=2v_1+1$
 \item[(7)] If, additionally, $v_3=w_3=0$, then either $w_2=|P_V|$ or $w_2=4v_2$.
\end{enumerate}
\end{proposition}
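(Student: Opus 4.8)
The plan is to treat the seven assertions as two groups: (1)--(3) are purely local counting identities in the metrized incidence graph $\Gamma_V$, and (4)--(7) are structural facts that exploit the bijections $\phi_p$ defining a $V$-configuration. For (1), I would fix $p$ and $q\in V_i(p)$ and partition the neighbors of $q$ (there are $v_1$ of them, by symmetry, since the incidence graph is regular of degree $v_1$) according to their distance from $p$; since a neighbor of $q$ is at distance $i-1$, $i$ or $i+1$ from $p$, this partition has exactly three blocks with sizes $v_{i,i-1}$, $v_{i,i}$, $v_{i,i+1}$, giving the identity. For (2) I would double-count the set of ordered pairs $(q,q')$ with $q\in V_i(p)$, $q'\in V_i(p)$, and $q\sim q'$ (adjacent), plus the analogous pairs with $q'\in V_{i-1}(p)$ and $q'\in V_{i+1}(p)$: counting from the $V_i(p)$ side each of its $v_i$ elements contributes $v_1$ adjacent vertices distributed over $V_{i-1},V_i,V_{i+1}$, while counting the cross-terms from the other side gives $v_{i-1,i}v_{i-1}$ and $v_{i+1,i}v_{i+1}$; rearranging yields the stated edge-counting relation. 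Part (3) is immediate from the definitions: $V_0(p)=\{p\}$, a point is at distance $0$ from itself and has no neighbors at distance $0$, all $v_1$ neighbors are at distance $1$, and $p$ itself is the unique neighbor of any $q\in V_1(p)$ lying in $V_0(p)$.

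For the $V$-configuration statements I would use the defining bijection $\phi_p$ heavily. Statement (5): the lines through a point $p$ are in bijection with $P_V$ via $\phi_p$, and each such line contains exactly two points other than $p$ (lines have three points); distinct lines through $p$ share only $p$, so the points at distance $1$ from $p$ number exactly $2|P_V|$, i.e. $w_1 = 2|P_V|$. Statement (6): pick $q\in V_1(p)$; the line $\ell=\overline{pq}$ is one line through both, and I must count the other neighbors of $p$ that are also neighbors of $q$. Coplanarity with $\ell$ at $p$ corresponds under $\phi_p$ to collinearity with $\phi_p(\ell)$ in $V$, i.e. to points of $V$ on the $v_1$ lines through $\phi_p(\ell)$ — that is $2v_1$ points of $P_V$ (again two per line, all distinct) — hence $2v_1$ lines through $p$ coplanar with $\ell$; by the ``if and only if'' in the definition of $V$-configuration, a common neighbor $r$ of $p$ and $q$ forces $\overline{pr}$ to be coplanar with $\ell$, and a careful check shows each of these $2v_1$ coplanar lines contributes exactly one such $r$ together with $q$ itself from the third point of $\ell$; this gives $w_{1,1}=2v_1+1$. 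Statement (4), $w_{2,2}\ge w_{2,1}$, I would get by exhibiting, for $p$ and $q\in V_2(p)$, an injection from the set of common neighbors of $p,q$ (counted by $w_{2,1}$) into the set of distance-$2$ points adjacent to $q$ (counted by $w_{2,2}$): given a common neighbor $r$, move along a suitable line through $r$ inside the plane spanned by the relevant lines to land on a new point at distance exactly $2$ from $p$ and adjacent to $q$, using the planarity/collinearity dictionary to guarantee the distance is not $1$.

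Statement (7) is the one I expect to be the crux. Assuming $v_3 = w_3 = 0$, so that both $\Gamma_V$ and $\Gamma_W$ have diameter at most $2$, I would count $w_2 = |V_2(p)|$ in $W$ two ways. On one hand, by (5)--(6) the number of points at distance $\le 1$ is $1 + w_1 = 1 + 2|P_V|$, and one can compute the number of (unordered) paths of length $2$ from $p$: there are $w_1$ neighbors, each with $w_{1,1}-1 = 2v_1$ neighbors other than along the return edge, but the structure organizes these into the planes through $p$. The key dichotomy is whether two neighbors $q_1,q_2$ of $p$ with $\phi_p(\overline{pq_1})$ and $\phi_p(\overline{pq_2})$ collinear ``reconnect'' at distance $2$ in a single new point or remain separated: if they always reconnect one gets $w_2$ equal to the number of points of $V$ (since the distance-$2$ points are then indexed by $P_V$, via the planes, mirroring (5)), and if they never do one gets $w_2 = 4v_2$ by counting four distance-$2$ points per each of the $v_2$ distance-$2$ points of $V$ (two choices on each of two lines). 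Making this dichotomy rigorous — showing that the $V$-configuration axiom forces one of the two behaviors uniformly, with no intermediate possibility — is the main obstacle; I would handle it by analyzing, for a fixed pair of coplanar lines at $p$, the subconfiguration they generate, using the $\PP^2$-subconfiguration characterization of coplanarity to pin down whether the two ``far'' vertices coincide, and then propagate uniformity using symmetry of $W$ together with the hypothesis $w_3 = 0$ to rule out mixed cases.
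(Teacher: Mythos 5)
Your treatment of (1)--(3) and (5) coincides with the paper's (partition the neighbors of $q$ by their distance from $p$; double count adjacent pairs; split each of the $|P_V|$ lines through $p$ into two affine points), and your plan for (4) is in substance the paper's argument once you replace ``a suitable line through $r$ inside the plane spanned by the relevant lines'' by the only line that can work, namely the line through $q$ and $r$: its third point lands in $W_{2,2}(p,q)$, and the map $r\mapsto(\mbox{third point of }\overline{qr})$ is injective, precisely because no line through $q$ can contain two points of $W_1(p)$ (two such points would force their lines to $p$ to be coplanar and hence put $q$ in $W_1(p)$). That fact is the entire content of (4) and needs to be stated and proved; note also that $\overline{pr}$ and $\overline{qr}$ cannot be coplanar when $q\in W_2(p)$, so there is no ``plane spanned by the relevant lines'' to work inside. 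In (6) there is a slip that happens to cancel: the number of lines through $q$ coplanar with $\ell$ is $v_1$, not $2v_1$ (coplanarity corresponds under $\phi_q$ to collinearity with $\phi_q(\ell)$, and $v_1$ counts the \emph{points} collinear with a given point of $V$, which is twice the number of lines through it), and each such coplanar line contributes \emph{two} common neighbors of $p$ and $q$, not one; the correct bookkeeping behind $w_{1,1}=2v_1+1$ is $v_1\cdot 2+1$.

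The genuine gap is (7). You correctly identify that the conclusion is a dichotomy, but you explicitly defer its proof to ``analyzing the subconfiguration generated by a pair of coplanar lines and propagating uniformity using symmetry and $w_3=0$''; that is not an argument, and it is not clear it can be made into one, since nothing in the axioms obviously forces the local ``reconnection'' behavior to be uniform over all pairs of lines, which is exactly the point at issue. The paper avoids any local uniformity claim and gets the dichotomy from a single global incidence count: let $\tilde X$ be the set of triples $(\ell,m,q)$ with $\ell\in\Lines_W(p)$, $m$ a line meeting $\ell$, and $q\in m$ with $q\in W_2(p)$. Projecting $\tilde X$ to $W_2(p)$ gives $|\tilde X|=w_2w_{2,1}$; projecting it to $\Lines_W(p)\times W_2(p)$, every fiber has cardinality $0$ or $1$ (two connecting lines from $\ell$ to $q$ would produce a plane and force $q\in W_1(p)$), and computing the number of empty fibers in two ways yields $w_2\bigl(|P_V|-w_{2,1}\bigr)=|P_V|\bigl(|P_V|-w_{2,1}\bigr)$. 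This identity factors on the nose, giving $w_2=|P_V|$ or $w_{2,1}=|P_V|$, and in the latter case parts (1)--(2) together with $v_3=w_3=0$ pin down $w_2=4v_2$. If you wish to keep your route you must prove that the reconnection pattern cannot vary with the pair of coplanar lines chosen; the incidence count sidesteps that question entirely, and I would recommend adopting it.
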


For convenience, throughout the proof we will set $\Lines_W(p)=\{\ell\in L_W|p\in\ell\}$, the set of lines through $p$ in $W$.

\begin{proof}
Let $V$ be a connected symmetric line configuration and $W$ a connected symmetric $V$-configuration.

\begin{enumerate}
 \item Fix $p\in V$, $i\in\NN$, $q\in V_i(p)$.  Then 
 	\begin{eqnarray*}
 	&&V_{i,i-1}(p,q)\cup V_{i,i}(p,q)\cup V_{i,i+1}(p,q)\\
 	&=&(V_{i-1}(p)\cap V_1(q))\cup(V_{i}(p)\cap V_1(q))\cup(V_{i+1}(p)\cap V_1(q))\\
 	&=&(V_{i-1}(p)\cup V_i(p)\cup V_{i+1}(p))\cap V_1(q)\\
 	&=&V_1(q).
 	\end{eqnarray*}
 \item Fix $p\in V$, $i\in\NN$.  Let $X=\{(a,b)|a\in V_i(p),b\in V_1(a)\}$.  Then $|X|=v_iv_1$.  But also, 
 \begin{eqnarray*}
 X&=&\{(a,b)|a\in V_i(p)\cap V_1(b),b\in \cup_j V_j(p)\}\\
 &=&\cup_j \{(a,b)|a\in V_{j,i}(p,b),b\in V_j(p)\}
 \end{eqnarray*} and so $|X|=\sum_j v_{j,i}v_j$.
 \item These all follow directly from the definitions.
 \item Fix $q\in W_2(p)$.  We prove that no line containing $q$ contains two points of $W_1(p)$.  As each line consists of $3$ points, this implies that $w_{2,2}\geq w_{2,1}$.  Let $a,b\in W_1(p)$ and assume that there is a line $\ell\in L_W$ such that $a,b,q\in\ell$.  As $a,b\in W_1(p)$, there exist lines $m_1,m_2$ through $p$ containing $a,b$ respectively.  But then, $m_1,m_2$ must be coplanar, and so there is a line $m$ containing $p,q$, so $q\in V_1(p)\cap V_2(p)=\emptyset$, a contradiction.
 \item For each line $\ell\in \Lines_W(p)$, fix a bijection $\ell\to \PP^1(\FF_2)$ such that $p$ is mapped to $\infty$.  Then $\cup_{\ell\in\Lines_W(p)} \ell\setminus \{p\}=\coprod_{\ell\in \Lines_W(p)}\Aff^1(\FF_2)$, and this has cardinality twice the number of lines, $2|P_V|$.
 \item Fix $p\in P_W$, $\ell$ a line through $p$, $p'\in \ell$ distinct form $p$.  Through $p'$, there are $|P_V|$ lines.  One is $\ell$, $v_1$ of them are coplanar with $\ell$, and the rest are not.  Each coplanar line consists of $n$ points in $V_1(p)$ that are not $p'$, but there are also $n-1$ points of $\ell$ in $V_1(p)$ other than $p'$, and so $w_{1,1}=2v_1+1$.
 \item Fix $p\in W$.  Let $\tilde{X}$ be the set of triples $(\ell,m,q)$ in $\Lines_W(p)\times L_W\times W_2$ such that $p\in \ell$, $\ell\cap m\neq \emptyset$ and $q\in m$.  There is a natural map $\tilde{X}\to W_2$ which is surjective.  Then
\begin{eqnarray*}
 |\tilde{X}|&=&w_2\cdot |\mbox{fiber}|\\
 &=&w_2\cdot |\{\mbox{paths to }q\in W_2\mbox{ from }q\}|\\
 &=&w_2\cdot |\{\mbox{points of} W_1(p) \mbox{connected to }p\}|\\
 &=&w_2\cdot |W_1(p)\cap W_1(q)|\mbox{ for }q\in W_2(p)\\
 &=&w_2w_{2,1}
\end{eqnarray*}

Now, we also have a map $\tilde{X}\to \Lines_W(p)\times W_2(p)=A\coprod B$, where the fiber over $A$ has cardinality 1 and over $B$ has cardinality 0.  These are the only possibilities, because if there were two, then we get a plane and $q\in W_1(p)$.  So $|A|=|\tilde{X}|=w_2w_{2,1}$, and $|A|+|B|=|\Lines_W(p)\times W_2(q)|=|P_V|w_2$, so $|B|=w_2(|P_V|-w_{2,1})$.

But, as $W$ is symmetric, we can see that $|V|=|P_V|\alpha$, where $\alpha$ is the number of lines in $\Lines_W(p)$ that don't have a line connecting them to $q$.  Then $\alpha=|P_V|-|\{\mbox{lines in }\Lines_W(p)\mbox{ connected to }q\}|$, which is $\alpha=|V|-w_{2,1}$, so $|B|=|P_V|(|P_V|-w_{2,1})$.

So, $w_2(|P_V|-w_{2,1})=|P_V|(|P_V|-w_{2,1})$, implying that either $w_2=|P_V|$ or else $w_{2,1}=|P_V|$.  In the latter case, the previous parts of this proposition along with the hypothesis that $\diam(W)\leq 2$, implies that $w_2=4v_2$.
\end{enumerate}
\end{proof}

We can now apply these numerics to another family.  Let the zero locus of $x_1^2+x_2^2+x_1x_2+x_3x_4+\ldots+x_{2n-1}x_{2n}$ in $\PP^{2n-1}(\FF_2)$ be denoted by $Q_{2n}^-$.  This is the smooth quadric of Witt defect 1 in $\PP^{2n-1}$ over $\FF_2$.  We have actually already seen it in an above example: if $n=2$, then the zero locus is five points, no three collinear, and if $n=3$, we get 27 points, each of which lies on 5 lines (and thus is adjacent to 10 points), which gives the configuration of lines on a smooth cubic surface.  The latter fact also appears as an isomorphism $WE_6\cong O^-_6(\FF_2)$, the orthogonal group preserving the form $x_1^2+x_2^2+x_1x_2+x_3x_4+x_5x_6$.

For all $n$, in fact, we can see that $Q_{2n+2}^-$ is a $Q_{2n}^-$-configuration, by noting that they are homogeneous varieties, and so we only need to look at the point $[0\colon\ldots\colon0\colon1]$, where everything is easy to compute.  Additionally, for $n\geq 3$ these are all symmetric and have diameter two, so all of the numerical conditions above apply.

\begin{lemma}
\label{119forbetaq6}
{\ \\}
Let $n\geq 2$.  Then the only connected, symmetric $Q_{2n}^-$-configuration of diameter 2 is $Q_{2n+2}^-$.
\end{lemma}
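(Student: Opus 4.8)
The plan is to first force the numerical invariants of $W$ to coincide with those of $Q_{2n+2}^-$ using Proposition \ref{propnumerics}, and then to upgrade this to an isomorphism of configurations. Since $Q_{2n+2}^-$ has already been seen to be a connected symmetric $Q_{2n}^-$-configuration of diameter $2$, it is itself an instance of such a $W$, so any invariant we force is automatically the one realized by $Q_{2n+2}^-$. Set $V=Q_{2n}^-$ and recall $|P_{Q_{2m}^-}|=2^{2m-1}-2^{m-1}-1$, so $|P_{Q_2^-}|=0$, $|P_{Q_4^-}|=5$, etc.; write $N:=|P_V|$, and since $V$ is a $Q_{2n-2}^-$-configuration each of its points lies on $|P_{Q_{2n-2}^-}|$ lines, so the valency of $\Gamma_V$ is $v_1=2|P_{Q_{2n-2}^-}|$. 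One then has the elementary identities $N-v_1-1=2^{2n-2}$ and $2N=2^{2n}-2^n-2$, which is all the arithmetic the argument needs.

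For the numerical step, parts (5) and (6) of Proposition \ref{propnumerics} give $w_1=2N$ and $w_{1,1}=2v_1+1$, and (since $V$ and $W$ both have diameter $\le 2$, so $v_3=w_3=0$) the argument proving part (7) yields the dichotomy $w_2=N$ or $w_{2,1}=N$. I would rule out the first: if $w_2=N$, then part (2) with $i=1$ forces $w_{2,1}=4(N-v_1-1)=2^{2n}$, and then part (1) with $i=2$ (together with $w_{2,3}=0$) gives $w_{2,2}=w_1-w_{2,1}=2N-2^{2n}=-2^n-2<0$, which is absurd. Hence $w_{2,1}=N$, and running the same two identities in the other order gives $w_2=4(N-v_1-1)=2^{2n}$, $w_{2,2}=w_1-w_{2,1}=N$, $w_{1,2}=w_1-1-w_{1,1}=2(N-v_1-1)$, and $w_{2,3}=w_3=0$. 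In particular $|P_W|=1+2N+2^{2n}$, and the collinearity graph $\Gamma_W$ is strongly regular with exactly the parameters of $\Gamma_{Q_{2n+2}^-}$.

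It remains to reconstruct the incidence geometry, and the plan is to identify $W$ as a thick, nondegenerate polar space. Thickness is built in (lines have three points and each point lies on $N\ge 3$ of them), nondegeneracy holds because $w_2>0$, and the crux is the ``one or all'' axiom: for a point $p$ and a line $m$ not through $p$, $p$ is collinear with exactly one or with all three points of $m$. If $p$ were collinear with two of them, say $a$ and $b$, the lines $\overline{pa}$, $\overline{pb}$ and $m$ pairwise meet without being concurrent, and using the $V$-configuration bijections $\phi_a,\phi_b,\phi_p$ together with the structure of $Q_{2n}^-$ one shows that $\overline{pa}$ and $\overline{pb}$ must be coplanar; the $\PP^2$-subconfiguration they span then contains $p,a,b$ and (by the line-configuration axiom) all of $m$, so $p$ is collinear with the third point of $m$ as well, and that $p$ is collinear with at least one point of $m$ follows from $\diam(W)=2$. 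Since the residue of a point of $W$ is the rank-$(n-1)$ polar space $Q_{2n}^-$, the polar space $W$ has rank $n$; for $n\ge 3$ the classification of finite polar spaces of rank at least $3$, matched against the residue type and the base field $\FF_2$, identifies $W$ with $Q_{2n+2}^-$, and for $n=2$ the geometry $W$ is a generalized quadrangle of order $(2,4)$, which is unique and equals $Q_6^-$.

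The main obstacle is this third step. The numerical step alone does \emph{not} suffice, since strongly regular graphs are not determined by their parameters; one genuinely has to recover the lines of $W$, and this is exactly where the hypothesis that $W$ is a $V$-\emph{configuration}---the coplanarity-preserving bijections $\phi_p$---and the diameter bound do the real work. The delicate calculation is the verification that two lines through $p$ meeting a common line off $p$ are forced to be coplanar (equivalently, that the local quadric structure at each point propagates to a global one); once that is in hand, invoking the classification of polar spaces, or the uniqueness of $GQ(2,4)$, is routine. An alternative that avoids external classification would coordinatize $W$ directly: fix a point $p_0$, use the identification $\Lines_W(p_0)\cong Q_{2n}^-\subset\PP^{2n-1}(\FF_2)$ to place the neighbours of $p_0$, extend over $W_2(p_0)$ using the ``perp'' data, and check that the resulting map realizes $W$ as the quadric $Q_{2n+2}^-$; this is self-contained but the bookkeeping is heavier.
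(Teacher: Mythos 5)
Your numerical step is the same as the paper's (and in fact cleaner: the paper's intermediate values $w_{1,1}=2^{2n-1}-2^n-4$ and $w_{1,2}=2^{2n-1}+1$ each contain an off-by-one slip --- the correct values are $2^{2n-1}-2^n-3$ and $2^{2n-1}$, as you compute --- though this does not affect the conclusion $w_2=2^{2n}$, $w_{2,1}=w_{2,2}=|P_V|$). Where you genuinely diverge is the reconstruction step. The paper argues directly that the $2^n(2^{n+1}-1)-1$ points admit an essentially unique incidence structure, by propagating the identification of a point of $W_2(w)$ with a choice of non-$w$ point on each line through $w$ along the neighbours $w'\in W_1(w)$; this is short but quite terse, and the claimed global uniqueness ``up to automorphisms of $Q_{2n}^-$ and involutions'' is asserted rather than proved. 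You instead verify the Buekenhout--Shult ``one or all'' axiom, recognize $W$ as a thick nondegenerate polar space with three points per line whose point residues are $Q_{2n}^-$, and then quote the classification of finite polar spaces of rank $\geq 3$ (with the point count $2^{2n+1}-2^n-1$ singling out the minus-type quadric among $Sp_{2n+2}(2)$ and $O^\pm_{2n+2}(2)$) together with the uniqueness of $GQ(2,4)$ for $n=2$. This buys rigor at the cost of importing heavy external classification results; the paper's route is self-contained but leaves the hard uniqueness assertion essentially unargued.

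One caution: the step you rightly flag as delicate --- that if $p$ is collinear with two points $a,b$ of a line $m\not\ni p$ then $\overline{pa}$ and $\overline{pb}$ are coplanar --- is not something the definition of a $V$-configuration hands you for free (a triangle of pairwise-meeting lines need not a priori sit inside a $\PP^2$-subconfiguration). However, this is exactly the unjustified sentence ``But then, $m_1,m_2$ must be coplanar'' in the paper's own proof of Proposition \ref{propnumerics}(4), so you may as well cite that argument rather than promise to rederive it; your ``at least one'' half of the axiom then follows from $w_{2,1}=|P_V|$ being the total number of lines through a point of $W_2(p)$. Modulo that shared point, your proof is correct.
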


\begin{proof}
We set $V=Q_{2n}^-$ for some fixed $n\geq2$ and $W$ a connected, symmetric $V$-configuration.

As we are assuming diameter 2, we have to determine $w_i$ and $w_{i,j}$ for $0\leq i,j\leq 2$.  Proposition \ref{propnumerics}(3) determines $w_0$, $w_{0,0}$, $w_{1,0}$ and that $w_{0,1}=w_1$.  Then Proposition \ref{propnumerics}(5) says that $w_1=2|Q_{2n}^-|=2(2^{n-1}(2^n-1)-1)=2^n(2^n-1)-2=2^{2n}-2^n-2$.

This leaves $w_{1,1},w_{2,1},w_{1,2}, w_{2,2}$ and $w_2$.  Proposition \ref{propnumerics}(6) gives $w_{1,1}=2(2^{2n-2}-2^{n-1}-2)=2^{2n-1}-2^n-4$.  Then Proposition \ref{propnumerics}(1) says that $2^{2n}-2^n-2=1+w_{1,1}+w_{1,2}$ and $2^{2n}-2^n-2=w_{2,1}+w_{2,2}$, which we solve for $w_{1,2}=2^{2n}-2^n-2-1-(2^{2n-1}-2^n-4)=2^{2n}-2^{2n-1}+1$.

Substitution of \ref{propnumerics}(1) into \ref{propnumerics}(2) and cancellation gives us the relation $w_iw_{j,i}=w_jw_{i,j}$.  And thus, $w_1w_{2,1}=w_2w_{1,2}$.  So $(2^{2n-1}-2^n-4)w_{2,1}=w_2(2^{2n}-2^n-2-1-(2^{2n-1}-2^n-4))$.  Using \ref{propnumerics}(7), we know that $w_2=(2^{n-1}(2^n-1)-1)$ or $4((2^{n-1}(2^n-1)-1)-1-2((2^{n-2}(2^{n-1}-1)-1)))=2^{2n}$.  The former case leads to $w_{2,2}<w_{2,1}$ contradicting \ref{propnumerics}(4), and so we must have the latter case.  And finally, this determines $w_{2,1}$ using $w_1w_{2,1}=w_2w_{1,2}$ and $w_{2,2}$ from $w_1=w_{2,1}+w_{2,2}$.

In particular, $|W|=|Q_{2n+2}^-|=2^n(2^{n+1}-1)-1$.

We now prove that these $2^n(2^{n+1}-1)-1$ points must be connected by lines in a unique way to satisfy the conditions of the lemma.  We begin with a point of the configuration $w\in W$.  As $w_{2,1}=w_{2,2}=2^{n-1}(2^n-1)-1=|Q_{2n}^-|$, each line not containing $w$ must contain one point in $W_1(w)$ and two points of $W_2(w)$.  Thus, a point in $W_2(w)$ is given by a point (not $w$) on each line containing $w$.

The same holds for any point $w'\in W_1(w)$, and there must be a function $W_1(w)\to W_1(w')$ that preserves incidence and takes the set of choices at $w$ to those at $w'$.  This will dictate the data of which half (we note that $w_{1,2}=\frac{1}{2}w_2$, and so each point of $W_1(w)$ is colinear with half of the points of $W_2(w)$) of the points of $W_2(w)$ are connected to $w'$.  As this works for each $w'\in W_1(w)$, there is only one such choice that will work globally, up to automorphisms of $Q_{2n}^-$ and involutions switching non-$w$ points on a line.
\end{proof}

So to identify a $Q_{2n}^-$-configuration, we must merely check symmetry and diameter.

\begin{proposition}
\label{Prop:BetaIsConfig}
{\ \\}
The general fiber of $\beta_5\colon\RA_5\to \PP^{15}/G_4$ is a $Q_6^-$-configuration.
\end{proposition}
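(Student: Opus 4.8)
The plan is to verify the three hypotheses of Lemma \ref{119forbetaq6} for $W = \beta_5^{-1}(\text{generic point})$: that it is a connected symmetric line configuration, that it is a $Q_6^-$-configuration, and that it has diameter $2$. The ``lines'' in $W$ will come from Theta symmetry (Theorem \ref{thetasym}): three points $(P_0,\nu_0),(P_1,\nu_1),(P_2,\nu_2)$ in $\RA_5$ form a line precisely when they arise as the three Pryms of a rank-$2$ isotropic subgroup $\{0,\mu_0,\mu_1,\mu_2\}$ of $\J_6(C)_2$ for a common genus-six curve $C$; Theorem \ref{thetasym} guarantees these three points have the same $\beta_5$-image, so they do lie in a single fiber. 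That such triples genuinely satisfy the combinatorial axioms of a line configuration (each line has exactly three points; two lines meeting in two points coincide) should follow from the fact that a rank-$2$ isotropic subgroup is determined by any two of its nonzero elements, together with genericity.

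The heart of the argument is identifying the ``local'' structure at a point $(P,\nu)\in W$, i.e. exhibiting the bijection $\phi_{(P,\nu)}$ from lines through $(P,\nu)$ to points of $Q_6^-$ required by the definition of a $V$-configuration, with coplanarity matching collinearity in $Q_6^-$. Here I would use the connection to the Prym map in genus six: fixing a point $(P,\nu)\in W$ amounts (via $\RP_6$, Theorem \ref{DS} and the tetragonal construction, Proposition \ref{Tetspan6}) to fixing a point in a fiber of $\RP_6\colon \RRM_6\to\RA_5$, whose general fiber carries the structure of the $27$ lines on a cubic surface — that is, a copy of $Q_6^-$. A line of $W$ through $(P,\nu)$, being a rank-$2$ isotropic subgroup with one distinguished Prym equal to $P$, corresponds to the choice of the auxiliary semiperiod, hence to a point of this $Q_6^-$; the coplanarity/collinearity correspondence should come from the compatibility of the tetragonal construction with these incidence structures, exactly the phenomenon flagged at the start of Section 2. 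This step is where one must be careful about genericity hypotheses and about the passage between $\beta_5$-fibers and $\P_6$-fibers.

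Symmetry — independence of the numerical invariants $w_i$, $w_{i,j}$ from the chosen point — and connectedness should be handled together, using the transitivity of a monodromy/Galois action on the generic fiber of $\beta_5$. One expects this from the irreducibility of $\RA_5$ together with the already-known transitivity results for the Prym map in genus six (the Galois group $\WE_6$ acting on the $27$ lines), propagated through Theta symmetry; connectedness of $W$ then follows since any two points of a $\P_6$-fiber are related by at most two tetragonal constructions (Proposition \ref{Tetspan6}), and each tetragonal step is a line in $W$.

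Finally, diameter $2$: I would argue that any two points of $W$ are joined by a path of length at most $2$ by leveraging Proposition \ref{Tetspan6} again — two points in the same $\P_6$-fiber are at most two tetragonal constructions apart, and I expect the global fiber of $\beta_5$ to be ``woven together'' out of such $\P_6$-fibers in a way that does not increase distances beyond $2$. \textbf{The main obstacle} I anticipate is precisely this last point together with the local identification in the middle paragraph: making rigorous that the incidence structure coming from Theta symmetry on the $\beta_5$-fiber is \emph{globally} a $Q_6^-$-configuration of diameter $2$, rather than merely locally looking like one at each point, since a priori the fiber could be larger or have longer geodesics. Controlling this is exactly why Lemma \ref{119forbetaq6} was set up — once symmetry, the $Q_6^-$-configuration property, and diameter $2$ are in hand, the lemma forces $W\cong Q_6^-$ — so the real work is checking those three hypotheses, with diameter being the delicate one, likely requiring the explicit fiber descriptions (Propositions \ref{Jacfiber}, \ref{Cubicfiber}, \ref{P5Jac}) as test cases and the degree computation of $\beta_5$ (promised in Sections 2–3) to pin down $|W|$.
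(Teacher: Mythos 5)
Your middle paragraph is exactly the paper's argument: for a general $(A,\mu)$, the 27 curves $(X_i,\nu_i)\in\P_6^{-1}(A)$ of Theorem \ref{DS} each give, via the two lifts $\mu_i^0,\mu_i^1$ of $\mu$ and theta symmetry, a line of the fiber through $(A,\mu)$, and the structure of the 27 lines on a cubic surface supplies the required bijection $\phi_{(A,\mu)}$ to $Q_6^-$ with coplanarity matching collinearity. Note, however, that the proposition asserts only the $Q_6^-$-configuration structure, which is a pointwise condition; the connectedness, symmetry, and diameter-2 hypotheses of Lemma \ref{119forbetaq6} that you single out as the main obstacles are not part of this statement --- the paper establishes diameter 2 in the lemma immediately following and connectedness only in Section 3, so your proposal proves the right thing by the same route but folds in work that belongs to later steps.
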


\begin{proof}
Let $(A,\mu)\in \RA_5$ be a general point.  Then $A$ is the Prym variety of 27 distinct curves $(X_i,\nu_i)\in \RM_6$, and $\mu$ lifts to two distinct points of order two on $\J(X_i)$, $\mu_i^0,\mu_i^1$.  Then theta symmetry implies that, again generically, $(\P(X_i,\mu_i^j),\bar{\nu}_i)$ are distinct points in the fiber of $\beta_5$.  Thus, $(A,\mu)$ lies on 27 triples $(A,\mu)$, $(\P(X_i,\mu_i^0),\bar{\nu}_i)$, $(\P(X_i,\mu_i^1),\bar{\nu}_i)$, which are lines over $\FF_2$, form the configuration $Q_6^-$, by Theorem \ref{DS}, and so the fiber is a $Q_6^-$-configuration.
\end{proof}

\begin{lemma}
{\ \\}
The connected components of the $Q_6^-$-configuration on the fibers of $\beta_5$ has diameter 2.
\end{lemma}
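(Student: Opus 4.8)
The plan is to show that any two points in the same connected component of a $Q_6^-$-configuration $W$ on a fiber of $\beta_5$ are joined by a path of length at most two. By Proposition \ref{Prop:BetaIsConfig}, $W$ is a $Q_6^-$-configuration, so by definition each point $w=(A,\mu)$ has its set of lines $\Lines_W(w)$ in bijection with $P_{Q_6^-}$ — the 27 lines on a cubic surface — in a way that matches coplanarity with collinearity. Concretely, a line through $w$ is a theta-symmetry triple $\{(A,\mu),(\P(X_i,\mu_i^0),\bar\nu_i),(\P(X_i,\mu_i^1),\bar\nu_i)\}$ coming from one of the 27 double covers $(X_i,\nu_i)\in\RM_6$ with $\P(X_i,\nu_i)=A$. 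Thus the neighbors $W_1(w)$ of $w$ are exactly the points reachable by a single theta-symmetry triple. The first step is to unwind what it means for a point $w''$ to lie in $W_2(w)$: there is an intermediate $w'=(\P(X_i,\mu_i^{\epsilon}),\bar\nu_i)\in W_1(w)$ and then a theta-symmetry triple through $w'$ carrying $w'$ to $w''$. I would translate this into the language of the tetragonal construction on $\RM_6$ via the compatibility between theta symmetry and the Prym map (Theorems \ref{thetasym}, \ref{DS}): passing along a line in $W$ corresponds, after choosing an identification, to applying a tetragonal construction upstairs on the curves whose Prym is $A$.

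The heart of the argument is then to invoke Proposition \ref{Tetspan6}: any two points in a fiber of $\tilde\P_6$ over $\widetilde{\A}_5$ are related by at most two tetragonal constructions. The fiber of $\tilde\P_6$ over (the relevant blown-up point corresponding to) $A$ consists of the 27 curves $(X_i,\nu_i)$, and the 27 lines through a fixed $w\in W$ are indexed by these. I would argue that if $w=(A,\mu)$ and $w'''=(A',\mu')$ lie in the same connected component of $W$, then chasing the sequence of theta-symmetry triples that connects them produces a sequence of tetragonal constructions connecting the corresponding genus-six data; conversely, since at most two tetragonal constructions suffice upstairs and each tetragonal step on $\RM_6$ produces (via theta symmetry and the Schottky--Jung identities, Theorem \ref{SJId}) a point of $W$ at distance one, we conclude $d_W(w,w''')\le 2$. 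The key bookkeeping point is that the full set of 27 neighbors-at-distance-one in the $Q_6^-$-configuration is realized by the 27 genus-six curves over $A$, and a point reachable by two tetragonal constructions upstairs lands on a curve coplanar (in the $Q_6^-$ sense) with the starting one, hence at $W$-distance exactly two. I would also need to rule out that the connected component is all of $W$ with larger diameter: but the diameter-two claim follows once we know \emph{every} element of the connected component is the Prym of a genus-six curve sharing a tetragonal tower chain of length $\le 2$ with the curves over $A$, and this is precisely what Proposition \ref{Tetspan6} guarantees.

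The main obstacle I anticipate is the bookkeeping around genericity and the blow-ups: Proposition \ref{Tetspan6} is stated for $\tilde\P_6$ on the \emph{blown-up} spaces $\widetilde{\RM}_6\to\widetilde{\A}_5$, whereas the $Q_6^-$-configuration lives on the fiber of the unblown $\beta_5$, and theta symmetry as stated (Theorem \ref{thetasym}) is a statement about genus $g+1$ curves and their Prym semiperiods. I will need to check that for $(A,\mu)$ general in $\RA_5$, the 27 double covers are distinct and smooth (Theorem \ref{DS}), the two lifts $\mu_i^0,\mu_i^1$ are distinct and orthogonal to $\nu_i$ so that $(X_i,\mu_i^{\epsilon},\nu_i)\in\RRM_6$ and theta symmetry applies, and the resulting 27 triples are genuinely distinct lines — i.e. that no degeneracies collapse the configuration or shorten a would-be length-two path. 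Once genericity is in hand, the combinatorial translation "coplanar in $Q_6^-$ $\Leftrightarrow$ joined by two tetragonal constructions" reduces the diameter bound to Proposition \ref{Tetspan6}, and the remaining work is purely formal.
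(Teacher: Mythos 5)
There is a genuine gap here: your reduction of the diameter bound to Proposition \ref{Tetspan6} rests on identifying ``passing along a line of $W$'' with ``applying a tetragonal construction,'' and these are different operations living on different fibers. The tetragonal construction preserves the Prym variety: it moves $(X_i,\nu_i)$ to $(X_j,\nu_j)$ within the single fiber $\P_6^{-1}(A)$, and Proposition \ref{Tetspan6} is a statement about that one fiber. This is exactly the input already consumed by Proposition \ref{Prop:BetaIsConfig}, where it puts the $Q_6^-$-structure on the set of \emph{lines through a single point} $(A,\mu)$ of $W$, i.e.\ it makes $W$ a $Q_6^-$-configuration; it contributes nothing further. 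By contrast, traversing a line of $W$ is a theta-symmetry step $(\P(X,\mu_0),\bar{\mu}_1)\mapsto(\P(X,\mu_1),\bar{\mu}_0)$, which keeps the curve $X$ but \emph{changes the underlying abelian variety}. A point of $W_2(w)$ is therefore reached via a theta-symmetry triple based at some neighbor $(A',\mu')=(\P(X_i,\mu_i^j),\bar{\nu}_i)$ with $A'\neq A$, hence coming from the fiber $\P_6^{-1}(A')$ --- a fiber about which Proposition \ref{Tetspan6}, applied over $A$, says nothing. Your closing inference, that a point reachable by two tetragonal constructions ``lands on a curve coplanar with the starting one, hence at $W$-distance exactly two,'' is backwards: such a curve still has Prym $A$, so it merely indexes another of the $27$ lines through $w$, and its theta-symmetry partners lie at $W$-distance one. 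No control over $W_2$, let alone $W_3$, is obtained this way.

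What is actually needed is a bound on how far \emph{iterated theta symmetry} can travel, and for that one must understand the fibers of $\P_6$ over the neighboring abelian varieties, not over $A$ itself. The paper does this by a type-chasing argument over $\alpha_4(\A_4)$: the fiber over $(X,\mu)\in\RC^0$ consists of $54$ marked plane quintics (Proposition \ref{Cubicfiber}), whose theta-symmetry partners are Jacobians; over a Jacobian the fiber is described by Proposition \ref{Jacfiber} (quintics, trigonal curves, Wirtinger covers), producing only cubic threefolds, Jacobians, and points of $\partial^I\RAt_5$; over $\partial^I$ one invokes Proposition \ref{BoundaryPrym} to see that only $\partial^{II}$ covers contribute generically. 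One then checks that the second iteration yields nothing not already at distance $\le 1$, using for instance that a Jacobian is the Prym of a unique quintic and that $\beta_5$ is generically injective on $\RC^0$. None of this bookkeeping is recoverable from Proposition \ref{Tetspan6} alone, so the proposal as written does not establish the lemma.
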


\begin{proof}
If we blow up $\RP_6:\RRM_6\to \RA_5$ to a finite map $\widetilde{\RP}_6$, then Proposition \ref{Cubicfiber} says that the fiber over $(X,\mu)\in\RC^0$, with the normal data suppressed for ease of notation, consists of 54 plane quintics with an odd and an even point of order two marked.  Theta symmetry then says that the Pryms with respect to the even points with the images of the odd points marked are in the same fiber of $\beta_5$, and all of these are the Jacobians of curves.  By Proposition \ref{Jacfiber}, the fiber of a Jacobian consists of quintic curves, whose Pryms are cubic threefolds and Jacobians, trigonal curves, whose Pryms are Jacobians, and singular curves, whose Pryms are Jacobians and degenerate abelian varieties in $\partial^I\RAt_5$.

If we start with a degenerate abelian variety, then we must only have singular curves lying over it.  Proposition \ref{BoundaryPrym} tells us that these are either in $\partial^{II}$ or $\partial_{i,6-i}$.  But the latter component only gives products, and so generically all 27 preimages under $\P_6$ are in $\partial^{II}$.  Thus, any points of $\RAt_5$ related to a point of $\partial^I \RAt_5$ is either in $\partial^I\RAt_5$ or is a Jacobian, and so no new types of points occur after the second iteration of theta symmetry.

Finally, we show that the objects obtained previously don't constitute anything new.  First, the Jacobians obtained from the chosen Jacobian in the second step.  These Jacobians all must have already been obtained from the cubic threefold, because each Jacobian is the Prym of a unique quintic curve, and will thus determine a cubic threefold.  However, generically no two cubic threefolds can be related in this manner, because the incidence is contained in the fibers of $\beta_5$, which induces a birational isomorphism between $\RC^0$ and $\alpha(\A_4)$, and so, generically, must be injective.  And last, we have the objects obtained from a given degenerate abelian variety.  We have twenty-seven nodal curves, and each occurs as a Wirtinger and as a $\partial^{II}$ double cover, which we then take the Pryms of.  The Wirtingers must be Jacobians we've already seen, by the above, and so the $\partial^{II}$ double covers would be obtained by applying theta symmetry on those Jacobians first.  Thus, the $Q_6^-$-configuration must have diameter 2.
\end{proof}

And so, by Lemma \ref{119forbetaq6}, the map $\beta_5$ must have degree a multiple of 119.

\section{Degenerations of Abelian varieties}

In this chapter, we will show that the $Q_6^-$-configuration on the fibers of $\beta$ is connected.  This amounts to computing the degree of $\beta$, which we will do by studying certain degenerations.  We will spend this chapter studying the structure of $\partial^{II}\RAt_5$, the degenerations with vanishing cycle orthogonal but not identical to the marked semiperiod.

\begin{lemma}[{\cite[Corollary 3.2.3 and Lemma 3.3.6]{MR903385}}]
\label{Lemma:dIIbeta}
{\ \\} The extension of $\beta_g\colon\RAt_g\to \PP(U_{g-1})/G_{g-1}$ to $\partial^{II}\RAt_g$ is $\beta_{g-1}$ and the diagram:
\[\begin{xy}
(0,0)*+{\PP(U_{g-2})/G_{g-2}}="a";
(30,0)*+{\PP(U_{g-1})/G_{g-1}}="b";
(0,15)*+{\RA_{g-1}}="c";
(30,15)*+{\RAt_g}="d";
{\ar "a";"b"};
{\ar^{i_{II}} "c";"d"};
{\ar_{\beta_{g-1}} "c";"a"};
{\ar^{\beta_g} "d";"b"};
\end{xy}\]
is Cartesian.
\end{lemma}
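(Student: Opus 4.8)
The plan is to reduce the statement to an explicit degeneration of theta constants on $\HH_g$, and then to identify precisely which points of $\RAt_g$ are carried by $\beta_g$ into the subspace $\PP(U_{g-2})/G_{g-2}\subset\PP(U_{g-1})/G_{g-1}$ appearing in the square; the unlabelled horizontal arrow is the ``extend by zero'' linear embedding $U_{g-2}\hookrightarrow U_{g-1}$ attached to a coordinate hyperplane $(\ZZ/2\ZZ)^{g-2}\subset(\ZZ/2\ZZ)^{g-1}$.

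First I would fix the standard one-parameter model of the $\partial^{II}$ stratum. Work on $\HH_g$ with a period matrix in block form $\Omega=\left(\begin{smallmatrix}\omega&{}^{t}c\\ c&\Omega'\end{smallmatrix}\right)$, $\omega\in\HH_1$, $c\in\CC^{g-1}$, $\Omega'\in\HH_{g-1}$, and let $\mathrm{Im}\,\omega\to\infty$. The vanishing cycle of this degeneration is the first standard vector $e_1$; since the marked semiperiod is $v=(0,\dots,0,\tfrac12)$, whose support lies in the $\Omega'$-block, one checks that $e_1$ is orthogonal to $v$ for the symplectic form but is not congruent to it modulo $2$, so the family leaves $\RAt_g$ through $\partial^{II}$. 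In the limit $\Omega'$ is the period matrix of a $(g-1)$-dimensional ppav $X$, the vector $v$ restricts to a semiperiod $\mu$ on $X$, and $c$ (modulo the appropriate lattice) is the extension datum, i.e.\ the point of the fibre of $\partial^{II}\RAt_g\cong\RX_{g-1}$ over $(X,\mu)\in\RA_{g-1}$.

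Next I would compute the theta limit. Substituting the block form into $\beta_g(\Omega)_{\epsilon}=\theta\!\left[\begin{smallmatrix}\epsilon&0\\0&1/2\end{smallmatrix}\right](2\Omega,0)$ and isolating the $n_1$-summation in the defining series: if the first entry $\epsilon_1$ of $\epsilon\in(\tfrac12\ZZ/\ZZ)^{g-1}$ equals $\tfrac12$, then every lattice term is $O(|q|^{1/4})$ with $q=e^{2\pi i\omega}\to 0$, so that coordinate of $\beta_g$ tends to $0$; if $\epsilon_1=0$, the unique dominant term is $n_1=0$, and since $n_1+\epsilon_1=0$ it kills every off-diagonal, $c$-dependent contribution, leaving exactly $\theta\!\left[\begin{smallmatrix}\epsilon'&0\\0&1/2\end{smallmatrix}\right](2\Omega',0)=\beta_{g-1}(\Omega')_{\epsilon'}$, where $\epsilon'$ is $\epsilon$ with its first entry deleted. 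Thus the extension $\overline{\beta_g}$ restricted to $\partial^{II}\RAt_g$ is holomorphic (not all coordinates vanish), independent of the extension datum, factors through $\RX_{g-1}\to\RA_{g-1}$, and equals $\beta_{g-1}$ followed by the embedding above; a bookkeeping check that this is equivariant for the natural homomorphism $G_{g-2}\to G_{g-1}$ lets everything descend to the quotient spaces, which gives the first assertion and the commutativity of the square.

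For the Cartesian claim I would show that $\beta_g^{-1}$ of the image of $\PP(U_{g-2})/G_{g-2}$ is exactly $\partial^{II}\RAt_g$. That image is, up to the $G$-action, the linear subspace on which all theta coordinates with $\epsilon_1=\tfrac12$ vanish. Each of these is a nonzero Siegel modular form (its characteristic is even), so on the irreducible interior $\RA_g$ their common zero locus lies inside a proper divisor; hence no interior point maps into $\PP(U_{g-2})/G_{g-2}$. The same reduction, run through the other corank-one strata, shows that on $\partial^{III}\RAt_g$ the characteristic bookkeeping turns $\overline{\beta_g}$ into $\alpha_{g-1}$ precomposed with the projection to $\A_{g-1}$, and by Proposition~\ref{Schottkyboundary} both $\partial^{I}\RAt_g$ and $\partial^{III}\RAt_g$ map into $\im\alpha_{g-1}$, which is not contained in the coordinate subspace $\PP(U_{g-2})/G_{g-2}$; so those strata are excluded as well. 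Therefore the preimage is precisely $\partial^{II}\RAt_g$, on which $\beta_g$ is the pullback of $\beta_{g-1}$ along $\RX_{g-1}\to\RA_{g-1}$, and the square with $\RA_{g-1}$ in the upper-left corner, included via $i_{II}$ as $\partial^{II}\RAs_g$ (equivalently as the zero section of $\RX_{g-1}=\partial^{II}\RAt_g$), is Cartesian. I expect this last step to be the main obstacle: one must control the common zero locus of the $\epsilon_1=\tfrac12$ theta constants on the interior so as to exclude spurious components of the fibre product, and track the scheme structure carefully (the toroidal-versus-Satake bookkeeping of \cite{MR903385}) so that the fibre product comes out reduced and equal to $\RA_{g-1}$ rather than the thicker $\RX_{g-1}$.
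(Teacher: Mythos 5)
The paper itself gives no proof of this lemma: it is quoted directly from Donagi's \emph{Big Schottky} (Corollary 3.2.3 and Lemma 3.3.6 of \cite{MR903385}), so you are in effect reconstructing Donagi's argument. Your first half does this correctly and in the standard way: the block degeneration $\Omega=\left(\begin{smallmatrix}\omega&{}^{t}c\\ c&\Omega'\end{smallmatrix}\right)$ with $\mathrm{Im}\,\omega\to\infty$ is the right model of a generic arc hitting $\partial^{II}$ (the pinched cycle is orthogonal to, and distinct from, the marked semiperiod), and isolating the $n_1$-summation does show that the coordinates with $\epsilon_1=\tfrac12$ tend to $0$ while those with $\epsilon_1=0$ tend to $\beta_{g-1}(\Omega')_{\epsilon'}$, with the cross terms in $c$ killed by $n_1+\epsilon_1=0$. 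That establishes the first assertion and the commutativity of the square.

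The Cartesian claim is where there is a genuine gap. You must show that the preimage of (every $G_{g-1}$-translate of) the coordinate subspace $\PP(U_{g-2})$ is \emph{exactly} $\partial^{II}\RAt_g$, but your argument only shows that each $\theta\!\left[\begin{smallmatrix}\epsilon&0\\0&1/2\end{smallmatrix}\right](2\Omega,0)$ with $\epsilon_1=\tfrac12$ is a nonzero modular form, hence that their common zero locus is contained in a proper divisor. That excludes the \emph{generic} interior point, not every interior point: the common zero locus of these $2^{g-2}$ theta-nulls could a priori be a nonempty subvariety of $\RA_g$ (for $g=5$ a naive count gives eight divisors in a $15$-dimensional space, hence expected dimension $7$), and any such component would break the Cartesian property. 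The same objection applies to your exclusion of $\partial^{I}$ and $\partial^{III}$, since Proposition \ref{Schottkyboundary} and the limit computation there only control the generic point of those strata. What is missing is a genuinely geometric argument that this common zero locus is empty away from $\partial^{II}$ --- this is the actual content of Donagi's Lemma 3.3.6 and cannot be replaced by the observation that each form is individually nonzero. Finally, you correctly flag that the set-theoretic fibre product over the toroidal compactification is $\RX_{g-1}=\partial^{II}\RAt_g$ rather than $\RA_{g-1}$ (the statement as reproduced here conflates the Satake and toroidal pictures, since $\partial^{II}\RAs_g\cong\RA_{g-1}$), but flagging this and the reducedness issue as ``the main obstacle'' is an accurate self-diagnosis rather than a resolution.
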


This result tells us that if we start with a point in $\partial^{II}\RAt_g$ then the whole fiber over its image is in $\partial^{II}\RAt_g$, and so we focus our attention there:

\begin{proposition}
\label{Prop:ThetaSpan}
{\ \\}
Any two points in a fiber of $\beta_5$ inside $\partial^{II}\RAt_5$ are related by a sequence of theta symmetries.
\end{proposition}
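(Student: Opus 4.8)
The plan is to transport the whole question to the Prym map in genus one lower, via the description of $\partial^{II}\RAt_5$ in Lemma~\ref{Lemma:dIIbeta}, and then to rerun, for $\beta_4$, the analysis that produced Proposition~\ref{Prop:BetaIsConfig} and the diameter-two lemma, now with $\P_5$ and Donagi's cubic threefold picture in place of $\P_6$ and the twenty-seven lines. First, Lemma~\ref{Lemma:dIIbeta} provides a Cartesian square identifying $\partial^{II}\RAt_5$ with $\RA_4$ and $\beta_5|_{\partial^{II}\RAt_5}$ with $\beta_4\colon\RA_4\to\PP(U_3)/G_3$; in particular a fiber of $\beta_5$ that meets $\partial^{II}\RAt_5$ is contained in it, and since theta symmetry (Theorem~\ref{thetasym}) preserves $\beta_5$ any chain of theta symmetries between two of its points automatically stays in $\partial^{II}\RAt_5$. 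Under the identification such a fiber is exactly a fiber of $\beta_4$, so it suffices to show that any two points of a fiber of $\beta_4$ are related by a sequence of theta symmetries of $\beta_4$.

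Next I would match the two notions of theta symmetry. A theta symmetry of $\beta_5$ all of whose three Prym varieties lie on $\partial^{II}\RAt_5$ must come from a degenerate genus-six curve; by Proposition~\ref{BoundaryPrym}, forcing all three limit Pryms onto $\partial^{II}$ — rather than onto $\partial^{I}$, $\partial^{III}$, or a product locus — forces the curve to be a one-nodal $C=X/(p\sim q)$ with $X$ smooth of genus five and the rank-two isotropic subgroup $\{0,\mu_0,\mu_1,\mu_2\}\subset\J(C)_2$ to lie inside $\delta^\perp$ and to avoid the vanishing cycle $\delta$. By the Mumford sequence such a subgroup descends isomorphically onto a rank-two isotropic subgroup of $\J(X)_2$, and conversely every such subgroup of $\J(X)_2$ lifts back; tracking the three Pryms and their semiperiods through Proposition~\ref{BoundaryPrym} identifies the resulting triple in $\partial^{II}\RAt_5\cong\RA_4$ with the triple produced by the theta symmetry of $\beta_4$ attached to $X$ and the descended subgroup. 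Thus the theta-symmetry relation on a fiber of $\beta_5$ inside $\partial^{II}\RAt_5$ is precisely the theta-symmetry relation on the corresponding fiber of $\beta_4$, and the problem is reduced to genus four.

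Finally I would establish connectivity of a fiber of $\beta_4$ by mirroring the interior argument. A general point $A\in\A_4$ is a Prym $\P_5(X,\tilde{X})$, and by Proposition~\ref{genus5maps}(2) and Donagi's description of the general fibers of $\P_5$, the fiber $\P_5^{-1}(A)$ modulo the involution $\lambda$ is the Fano surface of lines of the smooth cubic threefold $\chi(A)$, with tetragonally related points corresponding to incident lines; recording semiperiods turns this incidence into the theta symmetry of $\beta_4$, and $\lambda$ glues the two sheets of $\P_5^{-1}(A)\to\P_5^{-1}(A)/\lambda$ compatibly with $\beta_4$. Since the Fano surface of a general cubic threefold is irreducible and any two of its points are joined by at most two incidences (two of the incidence curves on the surface must meet), the portion of a fiber of $\beta_4$ lying over $A$ is joined by at most two theta symmetries; letting $A$ vary, and invoking Proposition~\ref{P5Jac} over the locus of Jacobians of genus-four curves — together with Proposition~\ref{beta4fiber}, which pins the fiber of $\beta_4$ over a genus-three Jacobian down to two Kummer varieties joined by an explicit theta symmetry — one propagates connectivity across the entire fiber.

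The step I expect to be the real obstacle is the matching described above, together with the attendant semiperiod bookkeeping: one must track the $2$-torsion \emph{and} the $\CC^\times$-extension data of the degenerate Pryms of Proposition~\ref{BoundaryPrym} through the genus-four picture, keep careful account of which of $\partial^{I}$, $\partial^{II}$, $\partial^{III}$, $\partial_{i,6-i}$ each of the three limit Pryms lands on, and verify that line incidence on the Fano surface is \emph{exactly} theta symmetry of $\beta_4$, so that no symmetries are gained or lost, in particular under the degenerations used to propagate connectivity. The remaining, essentially combinatorial, input of the last step — irreducibility of the Fano surface and the fact that its incidence curves meet pairwise — is classical.
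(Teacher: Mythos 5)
Your first step (the reduction to $\beta_4$ via the Cartesian square of Lemma~\ref{Lemma:dIIbeta}) is exactly the paper's, and your second step is careful bookkeeping that the paper elides. The genuine problem is your third step: you conflate the tetragonal/Fano-surface incidence on $\P_5^{-1}(A)$ with theta symmetry on the fiber of $\beta_4$. These are different, in fact complementary, structures. A theta symmetry attached to $(X,\{0,\mu_0,\mu_1,\mu_2\})$ relates the three Pryms $(\P(X,\mu_i),\nu_i)$ of a \emph{single} genus-five curve with respect to the \emph{different} semiperiods of the isotropic subgroup --- generically three distinct abelian varieties. The Fano-surface incidence (the tetragonal relation) instead relates \emph{different} curves with the \emph{same} Prym $A$; transporting the second semiperiod through the tetragonal construction lands you at the same point $(A,\bar\nu)$ of $\RA_4$, so incidence of lines does not move you within the fiber of $\beta_4$ at all --- it only re-parametrizes (and governs the ``coplanarity'' of) the theta-symmetry lines through a fixed point, exactly as Theorem~\ref{DS} governs the coplanarity of the $27$ lines through a point in Proposition~\ref{Prop:BetaIsConfig}. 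In particular your claim that ``the portion of a fiber of $\beta_4$ lying over $A$ is joined by at most two theta symmetries'' fails: a single theta symmetry never joins two distinct points $(A,\bar\nu)$, $(A,\bar\nu')$ over the same general $A$, and the ``at most two incidences'' fact you invoke is about the tetragonal relation, not theta symmetry. The subsequent ``letting $A$ vary \dots one propagates connectivity'' is then resting on nothing.

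There is also a structural reason your finite-combinatorics strategy cannot work here: a fiber of $\beta_4$ is three-dimensional (two Kummer threefolds over $\alpha_3(\J(C))$, by Proposition~\ref{beta4fiber}), so connectivity must be a dimension count, not a diameter bound. That is what the paper does: starting from $(B,\mu)\in\RJ_4$ in such a fiber, the theta symmetries through it are parametrized by $\RP_5^{-1}(B,\mu)$, a double cover of $\Sym^2 B\cup\Sym^2 B$ by Proposition~\ref{P5Jac}, so one application of theta symmetry sweeps out the two-dimensional image of $\widetilde{\Sym^2 B}$ in $\Bl_0K(\J(C))$ (trigonal curves staying in the Jacobian component, Wirtinger covers reaching the boundary component); as $(B,\tilde B)$ varies over points already in the orbit these two-dimensional images are pairwise distinct, so the orbit has dimension at least three and must be the whole irreducible component. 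If you want to salvage your write-up, replace the Fano-surface step by this sweeping argument; the cubic-threefold picture you describe is the right tool for a \emph{different} part of the paper, namely establishing the local $Q^-$-configuration structure, not the connectivity.
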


\begin{proof}
By Lemma \ref{Lemma:dIIbeta}, we are working with the map $\beta_4\colon\RA_4\to \PP^7/G_3$.  Now, fix a curve $C\in\M_3$.  Then the fiber of $\beta_4$ over $\alpha_3(\J(C))$ is $\Bl_0K(\J(C))\cup K(\J(C))$ by Proposition \ref{beta4fiber}, with the first component consisting of genus 4 Jacobians and the second of degenerate abelian varieties in $\partial^I\RAt_4$.  Over $(B,\mu)\in \RJ_4$, the fiber of $\RP_5$ is a double cover of $\Sym^2B\cup \Sym^2B$, with the first component consisting of trigonal curves and the second consisting of Wirtinger curves, as in Proposition \ref{P5Jac}.  Thus, for each $(B,\tilde{B})$, we get a map from the double cover of $\Sym^2B$, $\widetilde{\Sym^2B}\to \Bl_0K(\J(C))$, because theta symmetry takes trigonal curves to trigonal curves and Wirtinger curves to Wirtinger curves.  Each of these maps has two dimensional image, and they're all nonisomorphic, and thus distinct, so the dimension of the union is at least three, so surjectivity follows.
\end{proof}

Now that we have a locus where we know that theta symmetry spans the fibers, we need to identify theta symmetry over that locus:

Proposition \ref{IzadiUnram} tells us that the fiber in $\partial^{II}\RAt_5$ has the $Q_6^-$-configuration structure we expect, and more so, the fact that theta symmetry spans the fibers implies connectivity, so the fiber is $Q_8^-$.  It remains, though, to check that $\beta_5$ is generically unramified on $\partial^{II}\RAt_5$.

We have a tower of maps $\RRMbar_6\stackrel{\RP_6}{\to}\RAt_5\stackrel{\beta_5}{\to}\PP^{15}/G_4$ and as each map is generically finite, if we blow up each locus with positive dimensional fibers, and the blow up again so that the maps are all well defined, the maps will be finite.  We denote the blowups and blown up maps by $\widetilde{\RRM}_6\to \widetilde{\RA}_5\to\widetilde{\PP}$.  Set $X=\widetilde{\RA}_5\times_{\PP}\widetilde{\RA}_5$.  For each point $(x,y)\in X$, we can associate the distance in $\Gamma_{Q_8^-}$, or $\infty$ if they are not connected.  This gives a decomposition $X=I_1\cup I_{54}\cup I_{64}\cup I$, where $I_1$ is the diagonal, $I_{54}$ are theta related pairs, $I_{64}$ the pairs theta related to a common point and $I$ the pairs in the same fiber but not theta related.  Our goal is to show that $I=\emptyset$.

We will focus on studying $I_{54}$.  It fits into a diagram
\[
\begin{xy}
(0,0)*+{I_{27}}="a";
(0,30)*+{I_{54}}="b";
(30,30)*+{\widetilde{\RA}_5}="c";
{\ar_{2:1} "b";"a"};
{\ar^{\tilde{\beta}} "b";"c"};
{\ar_{\tilde{\P}} "a";"c"};
\end{xy}
\]
where $I_{27}$ is the pullback of the Prym map to $\widetilde{\RA}_5$ from $\A_5$, and $I_{54}\to I_{27}$ is the natural double cover.  Differentiating these maps, we can see that $\Ram\tilde{\beta}=\Ram\tilde{\P}$.

\begin{lemma}
\label{lemma:ram1}
{\ \\}
Let $f\colon X\to Y$ be a finite morphism of smooth varieties and $\bar{f}\colon X\times_Y X\to X$ be the fiber product of $f$ with itself.  Then if $p$ is a ramification point of $f$ and $q$ is a nonramification point in the same fiber, $(p,q)$ is a ramification point of $\bar{f}$.
\end{lemma}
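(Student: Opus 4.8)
The plan is to reduce the statement to a purely local computation of the differential of $\bar f$ at the point $(p,q)$, using the fact that for a finite morphism of smooth varieties the ramification locus is exactly the locus where $df$ fails to be an isomorphism on tangent spaces. First I would recall the explicit description of the fiber product: since $f$ is finite, $X\times_Y X$ is again a variety (possibly singular), and near $(p,q)$ its tangent space is $T_{(p,q)}(X\times_Y X)=\{(u,v)\in T_pX\oplus T_qX \mid df_p(u)=df_q(v)\}$. The second projection $\bar f=\mathrm{pr}_2$ sends $(u,v)$ to $v$, so a tangent vector $(u,v)$ lies in $\ker d\bar f_{(p,q)}$ precisely when $v=0$ and $df_p(u)=0$, i.e.\ $u\in\ker df_p$.

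The key point is then immediate from the hypotheses: since $q$ is a nonramification point of $f$, the map $df_q\colon T_qY\to T_{f(q)}Y$ (equivalently $T_qX\to T_{f(q)}Y$, as $f$ is finite so these have equal dimension) is an isomorphism; in particular for $v=0$ the compatibility condition $df_p(u)=df_q(0)=0$ is exactly $u\in\ker df_p$. Because $p$ is a ramification point of $f$, $df_p$ is not injective, so $\ker df_p\neq 0$, and hence $\ker d\bar f_{(p,q)}\supseteq \ker df_p\times\{0\}\neq 0$. Therefore $d\bar f_{(p,q)}$ is not injective, which means $(p,q)$ is a ramification point of $\bar f$. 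One should also check that $\bar f$ is finite (it is a base change of the finite map $f$) and that $X\times_Y X$ has the expected dimension near $(p,q)$ so that "ramification point" is the right notion; this follows because $f$ is finite and flat in a neighbourhood (finite plus smooth target, or generic flatness plus the finiteness already assumed), so $\bar f$ is finite flat of the same degree and $X\times_Y X$ is locally a complete intersection of the right dimension.

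The only genuine subtlety — and the step I expect to require the most care — is the bookkeeping when $X\times_Y X$ is singular at $(p,q)$, which can certainly happen when $p$ is a ramification point: one must make sure that the notion of "ramification point of $\bar f$" being used is the scheme-theoretic one (non-smoothness of $\bar f$, i.e.\ the support of $\Omega_{\bar f}$, or equivalently the non-injectivity of $d\bar f$ on Zariski tangent spaces), rather than a naive statement about smooth points. With that interpretation the tangent-space computation above is valid verbatim, since the description of $T_{(p,q)}(X\times_Y X)$ as the fiber product of tangent spaces holds for the scheme-theoretic fiber product regardless of smoothness. Finally I would note that this lemma is exactly what is needed to conclude $\Ram\tilde\beta=\Ram\tilde\P$ forces ramification of $\beta_5$ at $I_{54}$-related pairs, closing the argument that $I=\emptyset$ once combined with the preceding diagram.
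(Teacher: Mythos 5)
Your proof is correct and takes essentially the same route as the paper's: both reduce to the observation that $T_{(p,q)}(X\times_YX)=T_pX\times_{T_bY}T_qX$ and that, since $df_q$ is an isomorphism while $df_p$ is not, the differential of the projection remembering the $q$-coordinate has nonzero kernel $\ker df_p\times\{0\}$. If anything your version is the more careful one: it correctly identifies that it is the second projection that is ramified at $(p,q)$, whereas the paper's proof writes the target of $d\bar f_{(p,q)}$ as $T_pX$ (for which the differential would in fact be an isomorphism) where $T_qX$ is meant.
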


\begin{proof}
Set $b=f(q)=f(p)$.  As $p$ is a ramification point and $q$ is not, we have $df_q$ an isomorphism and $df_p$ not.  Locally, this means that we have an isomorphism of $X$ and $Y$ near $q$ but not near $p$, and so the map $d\bar{f}_{(p,q)}\colon T_{p,q}(X\times_YX)\to T_pX$ is not an isomorphism.
\end{proof}

\begin{lemma}
\label{lemma:ram2}
{\ \\}
Let $p\in \partial^{II}\widetilde{\RA}_5$.  Then if $\beta_5$ is ramified at $p$, $I_{54}$ is ramified over $p$.
\end{lemma}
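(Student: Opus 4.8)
The plan is to transfer the ramification of $\beta_5$ at $p$ from the target down to the tower $\RRMbar_6 \to \RAt_5$, and then back up to $I_{54}$ by exploiting the boundary description of Proposition \ref{IzadiUnram}. The key point, already noted above, is that on the (blown-up) Prym tower we have $\Ram \tilde\beta = \Ram \tilde\P$: differentiating $\tilde\beta \circ \tilde\P$ and using that each of $\tilde\P$, $\tilde\beta$ is finite on the relevant locus, a cotangent vector killed by $d\tilde\beta$ at a point of the fiber is detected upstairs. So the first step is: since $\beta_5$ is assumed ramified at $p \in \partial^{II}\widetilde{\RA}_5$, and since by Lemma \ref{Lemma:dIIbeta} the fiber of $\beta_5$ through $p$ lies entirely in $\partial^{II}\widetilde{\RA}_5$ and is identified with a fiber of $\beta_4$ over $\alpha_3(\J(C))$, the ramification of $\beta_5$ at $p$ must come from a point of the Prym tower $\widetilde{\RRM}_6$ lying over $p$ at which $\tilde\P$ (equivalently $\tilde\beta$) is ramified.

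Second, I would invoke Proposition \ref{IzadiUnram}: the restriction $\partial^{II} \to \partial\At_5$ of the Prym map $\P_6$ is generically unramified and finite. Together with $\Ram\tilde\beta = \Ram\tilde\P$ along the fiber, this pins down where the ramification of $\beta_5$ over $p$ lives: it cannot come from the $\partial^{II}$ sheets of $\tilde\P$ (those are unramified by \ref{IzadiUnram}), so it must come from the structure of the $27$-to-$1$ map $\tilde\P$ itself being ramified, i.e.\ from two of the $27$ preimages colliding — but a collision of preimages of $\tilde\P$ is exactly what produces a ramification point of the fiber product $I_{27} = \widetilde{\RA}_5 \times_{\A_5} \widetilde{\RA}_5$ over $p$, by Lemma \ref{lemma:ram1} applied to $f = \tilde\P$. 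Then the double cover $I_{54} \to I_{27}$ is finite and étale over the locus of distinct preimages (it is the lift of $\mu$ to the two points $\mu_i^0, \mu_i^1$ of Theorem \ref{thetasym}), so a ramification point of $I_{27}$ over $p$ lifts to a ramification point of $I_{54}$ over $p$; hence $I_{54}$ is ramified over $p$, which is the claim.

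The alternative route, which may in fact be cleaner to write, is to argue directly with $I_{54} \xrightarrow{\tilde\beta} \widetilde{\RA}_5$: this is the restriction to the ``theta-related'' stratum of $\widetilde{\RA}_5 \times_{\widetilde\PP} \widetilde{\RA}_5 \to \widetilde{\RA}_5$. If $\beta_5$ is ramified at $p$, pick any $q$ in the fiber of $\beta_5$ through $p$ that is theta-related to $p$ (one exists because theta symmetry links $p$ to $27$ triples, as in Proposition \ref{Prop:BetaIsConfig}, and inside $\partial^{II}$ Proposition \ref{Prop:ThetaSpan} guarantees the fiber is non-trivial and theta-connected). By Lemma \ref{lemma:ram1} applied to $f = \beta_5$ (with $X = \widetilde{\RA}_5$, $Y = \widetilde\PP$), the pair $(p,q)$ — whether or not $q$ is itself a ramification point — gives a ramification point of the fiber product $\widetilde{\RA}_5 \times_{\widetilde\PP}\widetilde{\RA}_5$ lying over $p$, and since $I_{54}$ is an open-and-closed union of components of that fiber product containing such pairs $(p,q)$, $I_{54}$ is ramified over $p$.

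The main obstacle is the bookkeeping at the boundary: one must be sure that ``ramified at $p$'' for $\beta_5$ really does force a genuine collision or vertical tangency visible inside $\partial^{II}$, rather than ramification that is an artifact of the blow-up $\widetilde{\RA}_5 \to \RAt_5$ or that escapes to a different boundary stratum. This is exactly where Lemma \ref{Lemma:dIIbeta} (the Cartesian square, so the whole fiber stays in $\partial^{II}$) and Proposition \ref{IzadiUnram} (no ramification coming from the $\P_6$-sheets over $\partial^{II}$) do the work; the remaining care is to check that the point $q$ chosen in the fiber can be taken theta-related to $p$ and that $I_{54}$ contains the corresponding component of the fiber product, which follows from the construction of $I_{54}$ as the locus of theta-related pairs together with Proposition \ref{Prop:ThetaSpan}.
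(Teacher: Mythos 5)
Your proposal reaches the easy half of the paper's argument but misses the hard half, and the step you use to skip it is false. Lemma \ref{lemma:ram1} has as an essential hypothesis that $q$ is a \emph{non}-ramification point of $f$ in the fiber through $p$: its proof uses that $df_q$ is an isomorphism. Your second route asserts that $(p,q)$ is a ramification point of the fiber product ``whether or not $q$ is itself a ramification point,'' and that is simply not true. A toy example: for $f\colon \Aff^1\to\Aff^1$, $x\mapsto x^2$, the fiber product is $\{x=y\}\cup\{x=-y\}$, and the off-diagonal component (the analogue of $I_{54}$) maps isomorphically, hence unramified, over $0$ even though $f$ is ramified there. So the entire content of the lemma is concentrated in the case your proposal waves away: the possibility that \emph{every} point theta-related to $p$ is also a ramification point of $\tilde\beta$. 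The paper's proof treats exactly this case separately, observing that it would force a totally ramified fiber of the Prym map, i.e.\ a reduced, irreducible cubic surface on which every line has multiplicity at least two, and then rules that out by the residual intersection of the tangent plane along a double line. Nothing in your write-up substitutes for that geometric input.

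Your first route does not close the gap either. Proposition \ref{IzadiUnram} says the Prym map on $\partial^{II}$ is \emph{generically} unramified and finite; that is a statement about the general point of $\partial^{II}$ and cannot be invoked to produce an unramified sheet of $\tilde\P$ in the fiber over the \emph{particular} point $p$ at which $\beta_5$ is assumed ramified --- such a $p$ may well lie in the non-generic locus (indeed the lemma is only interesting there). Moreover, ``the ramification must come from two of the $27$ preimages colliding'' is just a restatement of $\tilde\P$ being ramified over $p$, not an argument that some other preimage stays unramified, which is what you need to feed into Lemma \ref{lemma:ram1}. To repair the proof, you must either exhibit an unramified $q$ in the fiber through $p$ (this is where the cubic-surface multiplicity argument comes in) or give a direct argument for the totally ramified case; as the $x\mapsto x^2$ example shows, the conclusion genuinely fails without some such input.
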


\begin{proof}
Let $p\in \partial^{II}\widetilde{\RA}_5$ be a ramification point of $\tilde{\beta}$.  Then either there exists a $q$ such that $(p,q)\in I_{54}$ is unramified, or else not.  If there is such a $q$, then this follows from Lemma \ref{lemma:ram1}.

If not, then we must have a fiber of the Prym map which is totally ramified.  This would correspond to a cubic surface such that every line has multiplicity at least two, but which is reduced and irreducible.  This cannot happen, as if there is a line of multiplicity two, then the residual intersection with the tangent plane along that line must be a line of multiplicity one, so every line would have multiplicity three, which does not occur.
\end{proof}

More useful than the lemma is the contrapositive: that if $I_{54}$ is unramified over $p$, then $\beta_5$ is unramified at $p$.

\begin{proposition}
{\ \\}
The map $\beta_5$ is generically unramified on $\partial^{II}\RAt_5$.
\end{proposition}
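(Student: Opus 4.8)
The plan is to run the contrapositive of Lemma~\ref{lemma:ram2}: it suffices to show that the cover $I_{54}\to\widetilde{\RA}_5$ is unramified over a general point $p$ of $\partial^{II}\widetilde{\RA}_5$. By the diagram defining $I_{54}$ we have $\tilde\beta=\tilde\P\circ c$, where $c\colon I_{54}\to I_{27}$ is the natural $2:1$ map, and $c$ is étale: the Mumford sequence gives each preimage curve $(X_i,\nu_i)$ exactly two lifts $\mu_i^0,\mu_i^1$ of the semiperiod, and these are distinct since they differ by the nonzero $\nu_i$. Combined with the identity $\Ram\tilde\beta=\Ram\tilde\P$ noted just before Lemma~\ref{lemma:ram1}, this reduces the statement to showing that $\tilde\P\colon I_{27}\to\widetilde{\RA}_5$ is generically unramified over $\partial^{II}$.

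Now $I_{27}$ is the pullback of $\P_6\colon\RM_6\to\A_5$ along $\widetilde{\RA}_5\to\A_5$, so the ramification of $\tilde\P$ over a general $p\in\partial^{II}\widetilde{\RA}_5$ is governed by the ramification of $\P_6$ at the $27$ preimages of the corresponding degenerate abelian variety. As recorded in the proof of the diameter~$2$ lemma above, Proposition~\ref{BoundaryPrym} forces each such preimage into $\partial^{II}\RMbar_6$ or $\partial_{i,6-i}\RMbar_6$; the latter component contributes only extensions built from products of abelian varieties, and the general point of $\partial^{II}\RAt_5\cong\RX_4$ is not of that kind, so in fact all $27$ preimages are unallowable double covers. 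Proposition~\ref{IzadiUnram} states that $\P_6$ is generically unramified and finite on $\partial^{II}\RMbar_6\to\partial\At_5$; pulling this back along $\widetilde{\RA}_5\to\A_5$ shows $\tilde\P$ is generically unramified over $\partial^{II}\widetilde{\RA}_5$, hence so is $\tilde\beta$, and Lemma~\ref{lemma:ram2} then yields that $\beta_5$ is generically unramified on $\partial^{II}\RAt_5$.

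I expect the main obstacle to be the bookkeeping around the blowups $\widetilde{\RRM}_6\to\widetilde{\RA}_5\to\widetilde{\PP}$: one must check that the centers of these blowups (the loci with positive-dimensional fibers, together with whatever is blown up to make the maps morphisms) meet $\partial^{II}\RAt_5$ only in proper closed subsets, so that a general point of $\partial^{II}$ is untouched and Proposition~\ref{IzadiUnram}, which is stated on the unblown-up spaces, applies there verbatim. A secondary point is the transition between the one-semiperiod picture of Proposition~\ref{IzadiUnram} ($\partial^{II}\RMbar_6\to\partial\At_5$) and the two-semiperiod objects $I_{27},I_{54}$ living over $\partial^{II}\RAt_5$: one verifies that remembering the extra semiperiod $\nu$ (resp.\ $\bar\nu$) is étale over the relevant locus, so it neither creates nor destroys ramification. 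Granting these two verifications, the chain of reductions above closes.
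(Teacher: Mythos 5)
Your proposal follows exactly the paper's route: the contrapositive of Lemma~\ref{lemma:ram2}, the identity $\Ram\tilde{\beta}=\Ram\tilde{\P}$, and Izadi's Proposition~\ref{IzadiUnram} to conclude that $I_{54}\to\widetilde{\RA}_5$, hence $\beta_5$, is generically unramified over $\partial^{II}$. The extra verifications you flag (\'etaleness of $I_{54}\to I_{27}$, locating the $27$ preimages in $\partial^{II}\RMbar_6$, and the blowup/semiperiod bookkeeping) are details the paper's terser proof leaves implicit, and your handling of them is sound.
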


\begin{proof}
Lemma \ref{lemma:ram2} tells us that a point is a ramification point for $\beta_5$ only if it is a branch point of $\tilde{\beta}$, which is the same as being a branch point over $\tilde{\P}$.  But Izadi's Theorem \ref{IzadiUnram} tells us that the Prym map is generically unramified over $\partial^{II}\RAt_5$, and so $I_{54}\to \widetilde{\RA}_5$ is generically unramified over $\partial^{II}\widetilde{\RA}_5$, and so, $\beta_5$ is as well.
\end{proof}

Thus, we have the main result of this section:

\begin{theorem}
\label{thm:betatheta}
{\ \\}
the fibers of $\beta_5\colon \RAt_5\to \PP^{15}/G_4$ are exactly the orbits of theta symmetry.
\end{theorem}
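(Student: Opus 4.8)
The plan is to reduce the statement to the single numerical fact that $\deg\beta_5 = 119$. Granting this, the theorem follows quickly. By Proposition~\ref{Prop:BetaIsConfig} a general fiber of $\beta_5$ is a $Q_6^-$-configuration, and by the lemma that its connected components have diameter two, together with Lemma~\ref{119forbetaq6}, each connected component is a copy of $Q_8^-$ and hence has $|Q_8^-| = 2^3(2^4-1)-1 = 119$ points; in particular a general fiber is a disjoint union of copies of $Q_8^-$. Since the lines of this configuration are exactly the triples cut out by one application of theta symmetry (as in the proof of Proposition~\ref{Prop:BetaIsConfig}), two points of a general fiber lie in the same connected component precisely when they are joined by a chain of theta symmetries; so each connected component is a single theta-orbit, and $\deg\beta_5 = 119$ forces there to be exactly one. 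The reverse inclusion---that every theta-orbit lies in one fiber of $\beta_5$---is immediate from Theorem~\ref{thetasym}.

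So the work is to show $\deg\beta_5 = 119$, and for this I would use the boundary divisor $\partial^{II}\RAt_5$. By Lemma~\ref{Lemma:dIIbeta} the square relating $\beta_5$, $\beta_4$ and $i_{II}$ is Cartesian, so the entire $\beta_5$-fiber over any point in the image of $\PP(U_3)/G_3$ lies in $\partial^{II}\RAt_5$; and since $\beta_4$ is dominant, a general point of $\partial^{II}\RAt_5$ maps to a point $y$ general in that image. Over such a $y$ the fiber is by now well understood: Proposition~\ref{IzadiUnram} supplies the $27$ preimages under the Prym map, theta symmetry organizes them into a $Q_6^-$-configuration, Proposition~\ref{Prop:ThetaSpan} shows this configuration is connected, and hence by Lemma~\ref{119forbetaq6} it is a single copy of $Q_8^-$ with $119$ points; moreover the preceding proposition shows that $\beta_5$ is unramified at each of these $119$ points, the non-unramified locus being a proper closed subset of $\partial^{II}\RAt_5$ whose image misses the general $y$.

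I would then conclude by a flatness count. After the blow-ups $\widetilde{\RA}_5\to\widetilde{\PP}$ of this section the map $\widetilde{\beta}_5$ is finite, and the analysis above applies to it: over a general $y$ in $\widetilde{\beta}_5(\partial^{II}\RAt_5)$ the fiber $\widetilde{\beta}_5^{-1}(y)$ is a reduced scheme of $119$ points along which $\widetilde{\beta}_5$ is unramified. Cutting $\widetilde{\PP}$ by a general smooth curve $\gamma$ through $y$---which meets the locus $\widetilde{\beta}_5(\partial^{II}\RAt_5)$, of codimension at least four, only at $y$---the finite morphism $\widetilde{\beta}_5^{-1}(\gamma)\to\gamma$ has degree $\deg\beta_5$ over $\gamma\setminus\{y\}$ and is flat over $\gamma$ (since $\widetilde{\RA}_5$ is Cohen--Macaulay and $\gamma$ is regular), so its fiber over $y$, which is precisely the reduced $119$-point scheme $\widetilde{\beta}_5^{-1}(y)$, has length $\deg\beta_5$. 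Hence $\deg\beta_5 = 119$, which is what remained.

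The genuine work here all lies in this section: proving that theta symmetry spans the fibers over $\partial^{II}\RAt_5$ (Proposition~\ref{Prop:ThetaSpan}), which rests on the genus-five Prym fiber picture, and proving $\beta_5$ generically unramified along $\partial^{II}\RAt_5$, which rests on Izadi's unramifiedness result (Proposition~\ref{IzadiUnram}) and the ramification lemmas. Given those, the only points in the deduction above that demand care are that the Cartesian square of Lemma~\ref{Lemma:dIIbeta} survives passage to the blow-ups near a general point of $\partial^{II}\RAt_5$---so that the boundary fiber really is the whole fiber of $\widetilde{\beta}_5$ over $y$---and the mild Cohen--Macaulayness of the blown-up models needed for the flatness count; both are routine for these explicitly constructed spaces. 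The one substantive obstacle, already cleared in the section, was establishing that the fiber over $\partial^{II}\RAt_5$ is a single connected $Q_8^-$ \emph{and} that $\beta_5$ is unramified there.
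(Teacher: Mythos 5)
Your proposal is correct and follows essentially the same route as the paper: the general fiber is a disjoint union of $Q_8^-$'s by the configuration lemmas, and the degree is pinned to $119$ by passing to $\partial^{II}\RAt_5$, where the Cartesian square of Lemma~\ref{Lemma:dIIbeta}, Proposition~\ref{Prop:ThetaSpan}, and the generic unramifiedness deduced from Izadi's result give a connected, reduced $119$-point fiber. The only difference is that you make explicit, via the Cohen--Macaulay/flatness count, the conservation-of-number step that the paper leaves implicit.
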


\section{Contracted loci}

Now, we've identified the fibers of $\beta_5$ with the orbits of theta symmetry, which gives us the degree of this generically finite map.  What remains is to understand the locus where $\beta_5$ has infinite fibers.  As each abelian variety has finitely many points of order two, and as $\alpha_g$ is finite and generically injective for all $g$ \cite[Proposition 1]{MR1305875}, any infinite fiber must arise from an infinite fiber of the Prym map.

By Proposition \ref{Tetspan6} we know that given $(C,\mu)\in \RM_6$, we obtain the rest of the fiber containing $(C,\mu)$ by iterating the tetragonal construction, so long as we allow points to be related via the boundary.  There can be no infinite chains of curves with finitely many $g^1_4$'s, because such a fiber would be countable, and an infinite fiber must be uncountable.  Thus, infinite fibers will correspond to curves with infinitely many $g^1_4$'s and to singular curves.

\begin{theorem}[{\cite[Theorem IV.5.2]{MR770932},\cite[Theorem in Appendix]{MR0379510}}]
\label{thm:g14}
{\ \\}
Let $C$ be a smooth non-hyperelliptic curve of genus $g\geq 4$.  Let $d,r\in\ZZ$ with $0<2r\leq d$ and $2\leq d\leq g-2$.  Then, if there exists a $(d-2r-1)$-dimensional family of $g^r_d$'s on $C$, we must have that $C$ is trigonal, bielliptic or a plane quintic.
\end{theorem}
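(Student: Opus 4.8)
The plan is to reduce the classification to a dimension count for special linear series and invoke the structure theory of such series on general curves, exactly as in the cited references of Arbarello--Cornalba--Griffiths--Harris and the appendix to Mumford's paper. First I would recall that for a smooth non-hyperelliptic curve $C$ of genus $g$, the Brill--Noether number $\rho(g,r,d) = g - (r+1)(g-d+r)$ governs the expected dimension of the variety $W^r_d(C)$ parametrizing line bundles of degree $d$ with at least $r+1$ sections, and that $G^r_d(C)$, the variety of $g^r_d$'s, has dimension equal to $\rho$ plus correction terms measured by the dimension of $W^{r+1}_d$ (via the base-point-free pencil trick and the description of the tangent space to $G^r_d$). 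The hypothesis gives a $(d-2r-1)$-dimensional family, and one computes that $d - 2r - 1$ exceeds $\rho(g,r,d)$ for the given ranges of $d,r$ relative to $g$; hence $C$ fails to behave like a Brill--Noether general curve, and the excess must be accounted for by a distinguished low-gonality structure.

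The key steps, in order, are: (1) translate the hypothesis ``$(d-2r-1)$-dimensional family of $g^r_d$'s'' into the statement $\dim G^r_d(C) \geq d-2r-1$, and compare with $\rho(g,r,d)$, noting the inequality $d - 2r - 1 > \rho$ holds throughout the stated range $0 < 2r \leq d \leq g-2$; (2) apply the excess-dimension results for special linear series (this is precisely the content of \cite[Theorem IV.5.2]{MR770932}) which force the existence of either a $g^1_2$ (excluded by the non-hyperelliptic hypothesis), a $g^1_3$ (so $C$ is trigonal), a $g^1_4$ together with constraints that, combined with Castelnuovo-type bounds, pin down bielliptic curves, or a $g^2_5$ (so $C$ is a plane quintic); (3) handle the borderline cases $d = g-2$ and $2r = d$ separately, where the family dimension is largest, verifying by hand that no other configuration of multiple pencils or nets can occur on a curve that is neither trigonal, bielliptic, nor a plane quintic. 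The appendix argument in \cite{MR0379510} treats the case $r=1$ directly via a monodromy/reducibility analysis of the incidence correspondence $\{(p, g^1_d) : p \text{ is a base point or ramification point}\}$, and I would follow that template for the low-rank cases.

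The main obstacle I expect is step (2)--(3): the bookkeeping needed to show that an excess-dimensional family of special series genuinely forces one of exactly these three geometric types, rather than some a priori more exotic curve, and in particular ruling out ``accidental'' excess coming from multiple independent low-degree pencils whose sum reconstructs the $g^r_d$'s. This is where one must invoke the full strength of the structure theorem: a curve carrying a positive-dimensional family of $g^r_d$'s with $d$ small relative to $g$ must be covered by a map of low degree to $\PP^1$ or $\PP^2$, and then a gonality/Clifford-index argument isolates the three cases. Since both \cite[Theorem IV.5.2]{MR770932} and \cite[Theorem in Appendix]{MR0379510} are quoted as available, I would cite them for this step and restrict my own argument to verifying that the numerical hypothesis $d - 2r - 1 > \rho(g,r,d)$ does hold on the whole parameter range $0 < 2r \leq d$, $2 \leq d \leq g-2$, $g \geq 4$, which is an elementary inequality: expanding, $d - 2r - 1 - \rho = d - 2r - 1 - g + (r+1)(g-d+r)$, and one checks this is positive precisely because $d \leq g-2$ keeps $g - d + r \geq r + 2 \geq 2$ while $r \geq 1$ makes the quadratic term dominate.
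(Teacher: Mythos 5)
The paper gives no proof of this statement: it is quoted verbatim (it is Mumford's refinement of Martens' theorem) from \cite[Theorem IV.5.2]{MR770932} and the appendix to \cite{MR0379510}, so your plan of deferring the substantive classification step to exactly those two references coincides with the paper's own treatment. One small caveat: the operative fact is Martens' bound $\dim W^r_d \le d-2r-1$ for non-hyperelliptic curves, with Mumford characterizing the curves attaining equality; the comparison with the Brill--Noether number $\rho(g,r,d)$ that you emphasize is a consequence rather than the mechanism of the proof, but since you cite the references for the structure theorem itself this does not affect correctness.
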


Fixing $g=6$, $d=4$ and $r=1$, the hypotheses on Theorem \ref{thm:g14} are satisfied, and so a curve of genus 6 that has a positive dimensional family of $g^1_4$'s must be hyperelliptic, trigonal, bielliptic, a plane quintic, or singular.

Proposition \ref{Hyperprym} says that Pryms of hyperelliptic curves are products.  Theorem \ref{Trigconst} tells us that trigonal Pryms are Jacobians.  Propositions \ref{Cubicfiber} and \ref{Jacfiber} tell us that Pryms of quintics are Jacobians or cubic threefolds and Proposition \ref{BoundaryPrym} says that the Prym varieties of singular curves are contained in $\J_5$, $\partial\At_5$, the Beauville Pryms or the product loci.

But we actually only care about contracted loci over $\alpha_4(\A_4)$, which may be irreducible components of $\RS_5$.  As $\dim(\alpha_4(\A_4))=10$, any irreducible component of $\RS_5$ must have dimension at least 10.  Also, as we will see later, $\overline{\RJ}_5$, $\overline{\RC}^0$, and $\partial^I\overline{\RA}_5$ are the only components of $\RS_5$ with positive local degree, thus, any other component is actually a contracted locus.

\begin{proposition}
\label{prop:contract}
{\ \\}
The only component of $\beta^{-1}_5(\alpha_4(\A_4))$ that is blown down by $\beta$ is $\RA_1\times \A_4$.
\end{proposition}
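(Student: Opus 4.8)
The plan is to go through the list of possible contracted loci assembled just above the statement and rule out all of them except $\RA_1\times\A_4$. From the discussion, a component of $\beta_5^{-1}(\alpha_4(\A_4))$ that is genuinely contracted must be the image under the Prym map of a locus of double covers $(C,\mu)$ where $C$ carries a positive-dimensional family of $g^1_4$'s (or is singular), and by Theorem \ref{thm:g14} together with Proposition \ref{BoundaryPrym} the candidates are: Pryms of hyperelliptic curves, Pryms of trigonal curves, Pryms of bielliptic curves, Pryms of plane quintics, and the various boundary loci ($\J_5$, $\partial\At_5$, Beauville Pryms, and products). First I would dispose of the ones whose Pryms are \emph{not} contracted at all — trigonal Pryms are Jacobians (Theorem \ref{Trigconst}), plane-quintic Pryms are Jacobians or cubic threefolds (Propositions \ref{Cubicfiber}, \ref{Jacfiber}), and these carry positive local degree rather than being blown down, so they are irrelevant here. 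The boundary loci $\J_5$, $\partial^I\At_5$, and the Beauville locus likewise appear with positive local degree and are not contracted components of $\beta_5^{-1}(\alpha_4(\A_4))$; this is the content of the remark that $\overline{\RJ}_5$, $\overline{\RC}^0$, and $\partial^I\overline{\RA}_5$ are precisely the positive-degree components.

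Next I would handle the product loci. By the earlier Proposition \cite[3.3.4]{MR903385}, every $\A_{g'}\times\RA_{g''}$ lies in $\RS_5$, but for a dimension reason only $\RA_1\times\A_4$ can be a component: $\dim(\A_1\times\RA_4)=1+10=11$ while the other products $\A_2\times\RA_3$, $\A_3\times\RA_2$, $\A_4\times\RA_1$, and the $\A_1\times$ variants have dimension $\le 11$ but — crucially — $\RA_1\times\A_4$ has dimension $1+10=11 > 10 = \dim\alpha_4(\A_4)$ and meets $\beta_5^{-1}(\alpha_4(\A_4))$ in a positive-dimensional fiber over each point, whereas the smaller products either fail to dominate onto a $10$-dimensional locus or are contained in $\overline{\RA_1\times\A_4}$. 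More precisely, I would check that $\RA_1\times\A_4\to\alpha_4(\A_4)$ has one-dimensional fibers (the elliptic factor together with its semiperiod varies) so this is a genuine contracted component, while any product with a larger ``abelian'' factor has Prym image of dimension $<10$ and so cannot be a component of $\beta_5^{-1}(\alpha_4(\A_4))$ at all, and $\A_4\times\RA_1$ maps with the Jacobian locus factor and is not contracted. The $\partial_{i,g-i}$ boundary contributes only products (as noted in the diam-2 lemma's proof), so nothing new arises there.

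The remaining and main case is the \emph{bielliptic} locus: I must show that Pryms of bielliptic genus-six curves do not produce a contracted component of $\beta_5^{-1}(\alpha_4(\A_4))$. Here the key input is Proposition \ref{notbielliptic} (Shokurov): since $g=6\neq 4$, if the Prym of a bielliptic double cover landed on a Jacobian it would force a contradiction, but more to the point, the bielliptic Prym locus $\mathcal{BE}\subset\A_5$ is known (from the Andreotti--Mayer description $\AM_5=\A_1\times\A_4\cup\mathcal{BE}\cup\J_5$ recalled in the introduction) and one checks its dimension and its position relative to $\alpha_4(\A_4)$: I would argue that $\beta$ restricted to the bielliptic Prym locus does not have infinite fibers over $\alpha_4(\A_4)$, equivalently that a bielliptic curve's positive-dimensional family of $g^1_4$'s still yields only finitely many Prym-distinct double covers once we are over a fixed point of $\alpha_4(\A_4)$, because the tetragonal orbits through a bielliptic curve are controlled by the elliptic quotient and do not sweep out a positive-dimensional locus in the $\beta_5$-fiber. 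The hard part will be precisely this bielliptic analysis — ruling out that the bielliptic construction gives a new contracted component — since unlike the hyperelliptic, trigonal, and quintic cases there is no off-the-shelf identification of the Prym, and one must instead use the geometry of the double cover $\tilde C\to C\to E$ and the tetragonal maps pulled back from $E\to\PP^1$ to see that the resulting family of Pryms either degenerates into the product locus $\RA_1\times\A_4$ already accounted for, or is finite over $\alpha_4(\A_4)$ and hence not contracted. Once bielliptics are absorbed into $\RA_1\times\A_4$ (or shown non-contracted), the enumeration is complete and the proposition follows.
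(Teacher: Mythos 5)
Your enumeration of candidates and your treatment of the hyperelliptic, trigonal, and product cases track the paper's proof, but there are two genuine gaps and one misstatement.

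First, the bielliptic case. You correctly identify it as ``the hard part'' but then leave it unfinished: you propose a geometric analysis of the tower $\tilde C\to C\to E$ and the tetragonal orbits ``controlled by the elliptic quotient'' without carrying it out. The paper needs none of this: it disposes of the bielliptic loci in one line by invoking Proposition \ref{notbielliptic} (Shokurov). The point is that a blown-down component sitting inside $\beta_5^{-1}(\alpha_4(\A_4))$ would have to be theta-related to the Jacobians in its $\beta_5$-fiber, and Shokurov's theorem says that a genus-six curve whose Prym is a Jacobian is hyperelliptic, trigonal, or a plane quintic --- in particular not bielliptic. You actually cite \ref{notbielliptic} and then walk away from it toward a much harder and unneeded argument; as written, this case is not proved.

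Second, you conflate $\RC^0$ and $\RC^1$. You assert that plane-quintic Pryms that are cubic threefolds ``carry positive local degree rather than being blown down.'' That is true only for $\RC^0$ (even semiperiods), where $\beta_5$ restricts to the birational map $\chi^{-1}$. The locus $\RC^1$ (intermediate Jacobians with an \emph{odd} semiperiod) is listed in the paper among the loci with positive-dimensional fibers and must be excluded by a separate argument: theta symmetry applied to a point of $\RC^1$ produces only other points of $\RC^1$ (all three semiperiods in the isotropic triple are odd), so $\RC^1$ does not meet $\RS_5$ at all. Your proposal has no argument covering this locus. Relatedly, your claim that the Beauville locus ``appears with positive local degree'' is false and contradicts your own next sentence; the paper excludes the Beauville Pryms, $\partial^{III}$, and $\RA_4\times\A_1$ via Proposition \ref{Schottkyboundary} (they are not in the Schottky locus), and handles $\partial^{II}$ by noting its $\beta_5$-image lands in the lower-dimensional $\PP^7/G_3$. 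These cases need to be stated correctly for the enumeration to close.
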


\begin{proof}
By the above discussion, the only loci with positive dimensional fibers are $\RJ_5$, the bielliptic loci, $\RC^0$, $\RC^1$, $\partial^I\overline{\RA}_5^t$, $\partial^{II}\overline{\RA}_5^t$, $\partial^{III}\overline{\RA}_5^t$, and $\RP_6(\partial^{III}\RRMbar_6)$.

The loci $\RJ_5$, $\RC^0$ and $\partial^I\overline{\RA}_5^t$ actually have positive local degree (see next chapter) and so are not blown down.

The bielliptic loci are ruled out by \ref{notbielliptic}.

The locus $\RC^1$ is not in $\RS_5$, as any point of $\RC^1$ will only be theta related to other points of $\RC^1$.  Specifically, a curve over $(X,\mu)\in\RC^1$ must be a plane quintic $Q$ and an odd point of order two $\nu$ with $P(Q,\nu)=X$, but also the lifts of $\mu$ must be odd, so all three points in any instance of theta symmetry give points of $\RC^1$.

From \ref{Schottkyboundary}, it is shown that $\partial^{III}\overline{\RA}^t_5$, the Beauville Pryms, $\RA_4\times \A_1$ and $\RA_4\times\RA_1$ are not in the Schottky locus, and all other product loci other than $\RA_1\times\A_4$ are of too small dimension.  However, $\RA_1\times\A_4$ is contracted by $\beta_5$.

The only remaining component is $\partial^{II}\overline{\RA}_5^t$, but we know this locus is mapped to $\PP^7/G_3$, and so any points of it in $\RS_5$ must be the limits of points of other components.
\end{proof}

\section{Schottky-Jung locus}

Finally, we prove our main theorem:

\begin{theorem}
\label{thm:RS5}
{\ \\}
In $\overline{\RA}^t_5$, we have $\overline{\RS}_5=\overline{\RJ}_5\cup \overline{\RC^0}\cup \partial^I\overline{\RA}^t_5\cup\A_4\times\RA_1$.
\end{theorem}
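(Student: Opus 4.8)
The plan is to combine the preceding structural results into a single description of $\overline{\RS}_5$ by separately establishing the two inclusions, working throughout in $\overline{\RA}^t_5$ so that boundary behavior is controlled. For the inclusion $\supseteq$, I would verify each of the four pieces lies in $\overline{\RS}_5 = \overline{\beta_5^{-1}(\im\alpha_4)}$. The piece $\overline{\RJ}_5$ is immediate from the Schottky--Jung identities (Theorem \ref{SJId}), since $\RJ_5$ factors through $\RP_5$ and $\alpha_4$. The piece $\A_4\times\RA_1$ is the product locus handled by \cite[3.3.4]{MR903385}. The piece $\partial^I\overline{\RA}^t_5$ is the boundary component identified in Proposition \ref{Schottkyboundary} (and the strengthening in \cite{MR963063}). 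Finally $\overline{\RC^0}$ lies in $\overline{\RS}_5$ by the theta-symmetry argument already sketched after Theorem \ref{SJId}: for a plane quintic $Q$ with an odd semiperiod $\mu$ and even semiperiod $\nu$, Theorem \ref{thetasym} relates $(\P(Q,\mu),\bar\nu)$ and $(\P(Q,\nu),\bar\mu)$, and since $\P(Q,\nu)$ is a Jacobian the latter lies in $\im\alpha_4$, hence so does the former; taking closure and using that $\P(Q,\mu)$ ranges over $\RC^0$ gives the inclusion.

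For the reverse inclusion $\subseteq$, the key input is Theorem \ref{thm:betatheta}: the fibers of $\beta_5$ on $\overline{\RA}^t_5$ are exactly the orbits of theta symmetry. Thus a point $(A,\mu)\in\overline{\RS}_5$ lies (up to taking limits) in a fiber of $\beta_5$ meeting $\im\alpha_4$, i.e.\ its theta-symmetry orbit contains a point mapping into $\alpha_4(\A_4)$. I would then run through the classification of which components of $\beta_5^{-1}(\alpha_4(\A_4))$ can occur, exactly as assembled in the proof of Proposition \ref{prop:contract}: the candidate loci are $\RJ_5$, the bielliptic Prym loci, $\RC^0$, $\RC^1$, the three boundary divisors $\partial^I,\partial^{II},\partial^{III}\overline{\RA}^t_5$, the Beauville Pryms $\RP_6(\partial^{III}\RRMbar_6)$, and the various product loci. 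Proposition \ref{notbielliptic} kills the bielliptic loci; the theta-symmetry stability argument (an odd lift stays odd) shows $\RC^1\not\subset\overline{\RS}_5$; Proposition \ref{Schottkyboundary} together with \cite{MR963063} removes $\partial^{III}$, the Beauville Pryms, and the product loci other than $\A_4\times\RA_1$, which are moreover too small to be components; and $\partial^{II}\overline{\RA}^t_5$ maps into $\PP^7/G_3$ so contributes only limit points already accounted for. What survives is precisely $\overline{\RJ}_5\cup\overline{\RC^0}\cup\partial^I\overline{\RA}^t_5\cup(\A_4\times\RA_1)$.

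The main obstacle is making the component-by-component argument for the reverse inclusion rigorous as a statement about \emph{components of $\overline{\RS}_5$} rather than just about individual fibers. Specifically, one must ensure that an irreducible component of $\overline{\RS}_5$ of dimension $\ge 10$ is forced to be one of the listed loci: for each candidate $Z$ with positive-dimensional $\beta_5$-fibers one argues either that $Z\not\subset\overline{\RS}_5$, or that $Z$ has dimension $<10$ and so cannot be a component, or that $Z$ is one of the four named pieces; and for the part of $\overline{\RS}_5$ on which $\beta_5$ is generically finite, one uses that the fiber through a general point is a theta-symmetry orbit and that a generic such orbit meeting $\im\alpha_4$ forces the point into $\overline{\RJ}_5\cup\overline{\RC^0}$ (via Propositions \ref{Jacfiber}, \ref{Cubicfiber} and the birationality of $\beta_5$ between $\RC^0$ and $\alpha_4(\A_4)$). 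I would present this as: first bound dimensions of all contracted candidate loci (Proposition \ref{prop:contract} already does the key cases), then treat the generically finite locus via Theorem \ref{thm:betatheta} and the explicit fiber descriptions, and finally assemble the two inclusions to conclude equality in $\overline{\RA}^t_5$.
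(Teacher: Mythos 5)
Your inclusion $\supseteq$ and your treatment of the contracted loci match the paper, but for the hard inclusion $\subseteq$ you take a genuinely different route, and that route has a gap. You propose to handle the generically finite part of $\overline{\RS}_5$ by invoking Theorem \ref{thm:betatheta} to say that the fiber of $\beta_5$ through a point of $\RS_5$ is a theta-symmetry orbit, and then analyzing qualitatively which loci such an orbit can visit. The problem is that Theorem \ref{thm:betatheta}, as established, is a statement about \emph{general} fibers of $\beta_5$: its proof is a degree argument (each connected component of a general fiber is a $Q_8^-$ with $119$ points, and connectivity is checked by degenerating to $\partial^{II}$). The fibers relevant here lie over $\alpha_4(\A_4)$, a codimension-five locus in $\PP(U_4)/G_4$, so they are special fibers; a priori such a fiber could contain, besides the theta-symmetry orbit of the distinguished $\RC^0$ point, additional points (or whole ten-dimensional components on which $\beta_5$ is generically finite) that are not theta-related to anything you have listed. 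Your proposal never excludes this possibility, and Proposition \ref{prop:contract} does not help, since it only addresses components that are blown down by $\beta_5$.

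The paper closes exactly this gap with a conservation-of-number argument that your proposal omits entirely: it computes the local degree of $\beta_5$ over $\alpha_4(\A_4)$ along each of the three non-contracted candidate components --- local degree $1$ along $\overline{\RC^0}$ (since $\beta_5$ restricts there to the birational map $\chi^{-1}$), local degree $54$ along $\overline{\RJ}_5$ (a double cover of the $27$ lines after blowing up), and local degree $64$ along $\partial^I\overline{\RA}^t_5$ (via van Geemen--van der Geer, $\frac{1}{2}(2\Theta-4E)^4=64$) --- and observes that $1+54+64=119=\deg\beta_5$, so no further component with positive local degree can exist. If you want to avoid the degree count, you would instead have to argue that every point of the special fiber is a limit of points of nearby general fibers (properness of the finite model of $\beta_5$) and that the theta-symmetry correspondence $I_{54}$ is proper, so that limits of theta-related pairs remain theta-related; only then does the qualitative orbit analysis (the diameter-two computation starting from the $\RC^0$ point) apply to the special fiber. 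As written, your argument assumes the conclusion of that limiting step, and this is the essential missing idea.
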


\begin{proof}
As the general fiber of $\beta$ is a connected $Q_6^-$-configuration, we have that $\deg\beta=119$.  Then Proposition \ref{prop:contract} tells us that the only blown down component of $\RS_5$ is $\A_4\times \RA_1$, so it remains to identify the components with positive local degree.

It is known Proposition \ref{genus5maps} that $\beta_5$ restricts to $\chi^{-1}:\RC^0\dashrightarrow \A_4$, and thus has local degree one.

The Schottky-Jung relations imply that the restriction to $\overline{\RJ}_5$ is birational to the Prym map $\RM_5\to\A_4$.  Thus, over $A\in\A_4$, we get a double cover of the Fano surface of a cubic threefold.  However, when we blow up to obtain a finite map, we get a double cover of the 27 lines on a cubic surface, and so we have local degree 54.

In \cite{MR844633}, van Geemen and van der Geer compute the local degree on $\partial^I\overline{\RA}_5^t$ as follows: fix $X\in\A_4$.  Then the part of the fiber of $\beta_5$ is $K(X)$ and by blowing up, they showed that the degree of the map is the same as that of the map $K(X)\to \PP^4$ given by the linear system $\Gamma_{00}=\{s\in \Gamma(X,2\Theta)|\mult_0s\geq 4\}$.  We blow $X$ up at $0$, call the exceptional divisor $E$, and then this linear system is precisely $2\Theta-4E$ on the abelian variety.  Thus, the local degree is $\frac{1}{2}(2\Theta-4E)^4=64$.

This gives us degree $1+54+64=119$, and so there are no other components with positive local degree.
\end{proof}

And so, as noted in \cite{MR898055}, this implies

\begin{corollary}
{\ \\}
$\S_5^{\sm}=\J_5$
\end{corollary}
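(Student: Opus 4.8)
The plan is to deduce the corollary directly from Theorem \ref{thm:RS5} together with the definition of the small Schottky locus and the boundary description in Proposition \ref{Schottkyboundary}. Recall that $\S_5^{\sm}=\{A\in\A_5\mid (A,\mu)\in\RS_5\text{ for all nonzero }\mu\in A[2]\}$, while Theorem \ref{thm:RS5} identifies $\overline{\RS}_5$ inside $\overline{\RA}^t_5$ as the union of the four components $\overline{\RJ}_5$, $\overline{\RC^0}$, $\partial^I\overline{\RA}^t_5$, and $\A_4\times\RA_1$. So I would first intersect this list against the requirement that \emph{every} choice of semiperiod $\mu$ lands in $\RS_5$, i.e. that the whole fiber of the forgetful map $\RA_5\to\A_5$ over $A$ is contained in $\RS_5$.

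First I would dispose of the boundary and product components. The component $\partial^I\overline{\RA}^t_5$ lies entirely over the boundary $\partial\As_5\cong\A_4$, so no interior point of $\A_5$ is forced into $\S_5^{\sm}$ by it. For $\A_4\times\RA_1$: a point $A=X\times E$ with $X\in\A_4$, $E\in\A_1$ has many semiperiods, and for $(A,\mu)$ to lie in $\RS_5$ for \emph{all} $\mu$ one would need, in particular, the semiperiods not of product type (those $\mu$ whose projections to both factors are nonzero) to give points of $\RS_5$; by Proposition \ref{Schottkyboundary} and the strengthening in \cite{MR963063} the only product loci meeting $\RS_5$ are of the form $\A_{g'}\times\RA_{g''}$, so a semiperiod supported purely on the elliptic factor or mixed will fail, hence $\A_4\times\RA_1$ contributes nothing to $\S_5^{\sm}$. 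Finally the cubic-threefold component: for $X\in\C$, only the even semiperiod $\mu$ with $(X,\mu)\in\RC^0$ gives a point of $\RS_5$ — the odd one puts us in $\RC^1$, which by the argument in the proof of Proposition \ref{prop:contract} is not in $\RS_5$ — so $\C\not\subset\S_5^{\sm}$.

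This leaves only $\overline{\RJ}_5$, and I would argue that a Jacobian $\J(C)$ does lie in $\S_5^{\sm}$: for every nonzero $\mu\in\J(C)[2]$ the pair $(\J(C),\mu)$ is in $\RJ_5\subset\RS_5$ by the Schottky--Jung identities (Theorem \ref{SJId}), since the diagram commutes for the double cover $(C,\mu)$ and $\RJ_5(C,\mu)\in\RS_5$ by definition of $\RS_5$ as $\beta_5^{-1}(\im\alpha_4)$. Conversely if $A\in\S_5^{\sm}$ then $(A,\mu)\in\RS_5$ for some (indeed every) $\mu$, so $A$ lies in the image in $\A_5$ of one of the four components above; by the previous paragraph the only one all of whose fibers can lie in $\RS_5$ is $\overline{\RJ}_5$, whence $A\in\J_5$. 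Combining the two inclusions gives $\S_5^{\sm}=\J_5$.

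The main obstacle is the product-locus step: one must be careful that $\A_4\times\RA_1$, although it does appear as a genuine component of $\overline{\RS}_5$, still fails to force its image $\A_4\times\A_1\subset\A_5$ into $\S_5^{\sm}$, because the definition of $\S_5^{\sm}$ quantifies over \emph{all} semiperiods and a generic abelian fivefold of product type $X\times E$ has semiperiods of every mixed type, most of which are excluded from $\RS_5$ by Proposition \ref{Schottkyboundary} and its refinement. Making this precise requires knowing exactly which product loci $\A_{g'}\times\RA_{g''}$ meet $\RS_5$ — which is supplied by \cite[3.3.4]{MR903385} — and checking that the complementary semiperiods (e.g. those supported on the $\RA_1$ factor) land outside. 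Everything else is bookkeeping against the explicit component list of Theorem \ref{thm:RS5}.
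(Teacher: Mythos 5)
Your argument is correct and follows the paper's own proof essentially verbatim: both deduce the corollary from the component list of Theorem \ref{thm:RS5} by checking, for each component, which semiperiods of the underlying abelian variety lie in $\RS_5$, and observing that only Jacobians admit all of them. One small slip worth noting: for the product component $\A_4\times\RA_1$ it is precisely the semiperiods pulled back from the elliptic factor that \emph{do} lie in $\RS_5$ (they are the defining datum of that component), while the mixed ones and those supported on the $\A_4$ factor are the ones that fail; your conclusion that not every semiperiod works, hence $\A_4\times\A_1\not\subset\S_5^{\sm}$, is unaffected.
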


\begin{proof}
The points of $\S_5^{\sm}$ are just the abelian varieties $A$ such that $(A,\mu)\in\RS_5$ for all nonzero points of order two $\mu\in A$.  Among the components above, Jacobians have all nonzero points of order two, but the intermediate Jacobians of cubic threefolds only have even points of order two, the boundary component has only the vanishing cycle, and the product locus only has pullbacks of the points of order two on the elliptic curve.
\end{proof}

\bibliographystyle{alpha}
\bibliography{Schottky}

\begin{thebibliography}{AMRT10}

\bibitem[ACGH85]{MR770932}
Enrico Arbarello, M.~Cornalba, Phillip~A. Griffiths, and Joseph Harris.
\newblock {\em Geometry of algebraic curves. {V}ol. {I}}, volume 267 of {\em
  Grundlehren der Mathematischen Wissenschaften [Fundamental Principles of
  Mathematical Sciences]}.
\newblock Springer-Verlag, New York, 1985.

\bibitem[AM67]{MR0220740}
Aldo Andreotti and A.~L. Mayer.
\newblock On period relations for abelian integrals on algebraic curves.
\newblock {\em Ann. Scuola Norm. Sup. Pisa (3)}, 21:189--238, 1967.

\bibitem[AMRT10]{MR2590897}
Avner Ash, David Mumford, Michael Rapoport, and Yung-Sheng Tai.
\newblock {\em Smooth compactifications of locally symmetric varieties}.
\newblock Cambridge Mathematical Library. Cambridge University Press,
  Cambridge, second edition, 2010.
\newblock With the collaboration of Peter Scholze.

\bibitem[Bea77a]{MR0572974}
Arnaud Beauville.
\newblock Prym varieties and the {S}chottky problem.
\newblock {\em Invent. Math.}, 41(2):149--196, 1977.

\bibitem[Bea77b]{MR0472843}
Arnaud Beauville.
\newblock Vari{\'e}t{\'e}s de {P}rym et jacobiennes interm{\'e}diaires.
\newblock {\em Ann. Sci. {\'E}cole Norm. Sup. (4)}, 10(3):309--391, 1977.

\bibitem[Bea89]{MR1013156}
Arnaud Beauville.
\newblock Prym varieties: a survey.
\newblock In {\em Theta functions---{B}owdoin 1987, {P}art 1 ({B}runswick,
  {ME}, 1987)}, volume~49 of {\em Proc. Sympos. Pure Math.}, pages 607--620.
  Amer. Math. Soc., Providence, RI, 1989.

\bibitem[CG00]{MR1919422}
Ciro Ciliberto and Gerard van~der Geer.
\newblock The moduli space of abelian varieties and the singularities of the
  theta divisor.
\newblock In {\em Surveys in differential geometry}, Surv. Differ. Geom., VII,
  pages 61--81. Int. Press, Somerville, MA, 2000.

\bibitem[Cle83]{MR690465}
C.~Herbert Clemens.
\newblock Double solids.
\newblock {\em Adv. in Math.}, 47(2):107--230, 1983.

\bibitem[CM08]{MR2457734}
Sebastian Casalaina-Martin.
\newblock Singularities of theta divisors in algebraic geometry.
\newblock In {\em Curves and abelian varieties}, volume 465 of {\em Contemp.
  Math.}, pages 25--43. Amer. Math. Soc., Providence, RI, 2008.

\bibitem[Deb90]{MR1054527}
Olivier Debarre.
\newblock Vari{\'e}t{\'e}s de {P}rym et ensembles d'{A}ndreotti et {M}ayer.
\newblock {\em Duke Math. J.}, 60(3):599--630, 1990.

\bibitem[DM69]{MR0262240}
P.~Deligne and David Mumford.
\newblock The irreducibility of the space of curves of given genus.
\newblock {\em Inst. Hautes {\'E}tudes Sci. Publ. Math.}, (36):75--109, 1969.

\bibitem[Don81]{MR598683}
Ron Donagi.
\newblock The tetragonal construction.
\newblock {\em Bull. Amer. Math. Soc. (N.S.)}, 4(2):181--185, 1981.

\bibitem[Don87a]{MR903385}
Ron Donagi.
\newblock Big {S}chottky.
\newblock {\em Invent. Math.}, 89(3):569--599, 1987.

\bibitem[Don87b]{MR898055}
Ron Donagi.
\newblock Non-{J}acobians in the {S}chottky loci.
\newblock {\em Ann. of Math. (2)}, 126(1):193--217, 1987.

\bibitem[Don88]{MR963063}
Ron Donagi.
\newblock The {S}chottky problem.
\newblock In {\em Theory of moduli ({M}ontecatini {T}erme, 1985)}, volume 1337
  of {\em Lecture Notes in Math.}, pages 84--137. Springer, Berlin, 1988.

\bibitem[Don92]{MR1188194}
Ron Donagi.
\newblock The fibers of the {P}rym map.
\newblock In {\em Curves, {J}acobians, and abelian varieties ({A}mherst, {MA},
  1990)}, volume 136 of {\em Contemp. Math.}, pages 55--125. Amer. Math. Soc.,
  Providence, RI, 1992.

\bibitem[DS81]{MR594627}
Ron Donagi and Roy~Campbell Smith.
\newblock The structure of the {P}rym map.
\newblock {\em Acta Math.}, 146(1-2):25--102, 1981.

\bibitem[Fay73]{F}
John~D. Fay.
\newblock {\em Theta functions on {R}iemann surfaces}.
\newblock Lecture Notes in Mathematics, Vol. 352. Springer-Verlag, Berlin,
  1973.

\bibitem[Fre83]{MR701272}
Eberhard Freitag.
\newblock Die {I}rreduzibilit{\"a}t der {S}chottkyrelation ({B}emerkung zu
  einem {S}atz von {J}. {I}gusa).
\newblock {\em Arch. Math. (Basel)}, 40(3):255--259, 1983.

\bibitem[Gee84]{MR767196}
Bert~van Geemen.
\newblock Siegel modular forms vanishing on the moduli space of curves.
\newblock {\em Invent. Math.}, 78(2):329--349, 1984.

\bibitem[GG86]{MR844633}
Bert~van Geemen and Gerard van~der Geer.
\newblock Kummer varieties and the moduli spaces of abelian varieties.
\newblock {\em Amer. J. Math.}, 108(3):615--641, 1986.

\bibitem[GSM07]{MR2348405}
Samuel Grushevsky and Riccardo Salvati~Manni.
\newblock Singularities of the theta divisor at points of order two.
\newblock {\em Int. Math. Res. Not. IMRN}, (15):Art. ID rnm045, 15, 2007.

\bibitem[HM98]{MR1631825}
Joe Harris and Ian Morrison.
\newblock {\em Moduli of curves}, volume 187 of {\em Graduate Texts in
  Mathematics}.
\newblock Springer-Verlag, New York, 1998.

\bibitem[Igu69]{MR0271117}
{Jun-ichi} Igusa.
\newblock Geometric and analytic methods in the theory of theta-functions.
\newblock In {\em Algebraic {G}eometry ({I}nternat. {C}olloq., {T}ata {I}nst.
  {F}und. {R}es., {B}ombay, 1968)}, pages 241--253. Oxford Univ. Press, London,
  1969.

\bibitem[Igu81]{I}
{Jun-ichi} Igusa.
\newblock On the irreducibility of {S}chottky's divisor.
\newblock {\em J. Fac. Sci. Univ. Tokyo Sect. IA Math.}, 28(3):531--545 (1982),
  1981.

\bibitem[Iza91]{IzadiThesis}
Elham Izadi.
\newblock {\em On the moduli space of four dimensional principally polarized
  abelian varieties}.
\newblock PhD thesis, Univ. of Utah, 1991.

\bibitem[Kri10]{krichever605625characterizing}
Igor~Moiseevich Krichever.
\newblock {Characterizing Jacobians via trisecants of the Kummer Variety,
  arXiv. org: math}.
\newblock {\em Ann. of Math. (2)}, 2010.

\bibitem[MP12]{MR2956039}
Valeria~Ornella Marcucci and Gian~Pietro Pirola.
\newblock Generic {T}orelli theorem for {P}rym varieties of ramified coverings.
\newblock {\em Compos. Math.}, 148(4):1147--1170, 2012.

\bibitem[Mum71]{MR0292836}
David Mumford.
\newblock Theta characteristics of an algebraic curve.
\newblock {\em Ann. Sci. {\'E}cole Norm. Sup. (4)}, 4:181--192, 1971.

\bibitem[Mum74]{MR0379510}
David Mumford.
\newblock Prym varieties. {I}.
\newblock In {\em Contributions to analysis (a collection of papers dedicated
  to {L}ipman {B}ers)}, pages 325--350. Academic Press, New York, 1974.

\bibitem[Rec74]{MR0480505}
Sevin Recillas.
\newblock Jacobians of curves with {$g^{1}_{4}$}'s are the {P}rym's of trigonal
  curves.
\newblock {\em Bol. Soc. Mat. Mexicana (2)}, 19(1):9--13, 1974.

\bibitem[RF74]{MR0352108}
Harry~E. Rauch and Hershel~M. Farkas.
\newblock {\em Theta functions with applications to {R}iemann surfaces}.
\newblock The Williams\thinspace \&\thinspace Wilkins Co., Baltimore, Md.,
  1974.

\bibitem[Sch88]{S}
F.~Schottky.
\newblock Zur {T}heorie der {A}bel'schen {F}unctionen von vier {V}ariablen.
\newblock {\em J. Reine und Angew. Math.}, (102):304--352, 1888.

\bibitem[Shi86]{Sh}
Takahiro Shiota.
\newblock Characterization of {J}acobian varieties in terms of soliton
  equations.
\newblock {\em Invent. Math.}, 83(2):333--382, 1986.

\bibitem[Sho82]{MR641127}
V.~V. Shokurov.
\newblock Distinguishing {P}rymians from {J}acobians.
\newblock {\em Invent. Math.}, 65(2):209--219, 1981/82.

\bibitem[SJ09]{SJ}
F.~Schottky and H.~Jung.
\newblock Neue {S}{\"a}tze {\"u}ber {S}ymmetralfunctionen und die {A}bel'schen
  {F}unctionen der {R}iemann'schen {T}heorie.
\newblock {\em S.-B. Berlin Akad. Wiss. (Berlin), Phys. Math. Kl. 1.}, 1909.

\bibitem[SM94]{MR1305875}
Riccardo Salvati~Manni.
\newblock Modular varieties with level {$2$} theta structure.
\newblock {\em Amer. J. Math.}, 116(6):1489--1511, 1994.

\bibitem[Wel84]{We2}
Gerald~E. Welters.
\newblock A criterion for {J}acobi varieties.
\newblock {\em Ann. of Math. (2)}, 120(3):497--504, 1984.

\end{thebibliography}
\end{document}